\documentclass[11pt,twoside]{amsart}

\usepackage{amsmath,amssymb,amsthm,mathtools}
\usepackage{enumerate} 
\usepackage{verbatim}   
\usepackage{stmaryrd}
\usepackage{xcolor}
\usepackage{graphicx}
\usepackage{tikz}
\usetikzlibrary{decorations.pathreplacing}
\usepackage{tikz-cd}

\usepackage[lite]{amsrefs}

\usepackage[colorlinks=true,linkcolor=blue,citecolor=blue,urlcolor=blue]{hyperref}

\BibSpec{collection.article}{%
  +{}  {\PrintAuthors}                {author}
  +{,} { \textit}                     {title}
  +{.} { }                            {part}
  +{:} { \textit}                     {subtitle}
  +{,} { \PrintContributions}         {contribution}
  +{,} { \PrintConference}            {conference}
  +{}  {\PrintBook}                   {book}
  +{,} { }                            {booktitle}
  +{,} { }                            {series}
  +{, vol.} { }                       {volume}
  +{,} { }                            {publisher}
  +{,} { \PrintDateB}                 {date}
  +{,} { pp.~}                        {pages}
  +{,} { }                            {status}
  +{,} { \PrintDOI}                   {doi}
  +{,} { available at \eprint}        {eprint}
  +{}  { \parenthesize}               {language}
  +{}  { \PrintTranslation}           {translation}
  +{;} { \PrintReprint}               {reprint}
  +{.} { }                            {note}
  +{.} {}                             {transition}
  +{}  {\SentenceSpace \PrintReviews} {review}
}

\AtBeginDocument{\def\MR#1{}}

\newcommand{\lto}{\longrightarrow}

\DeclareMathOperator{\lcm}{lcm}

\DeclareMathOperator{\Spec}{Spec}

\DeclareMathOperator{\Proj}{Proj}
\DeclareMathOperator{\Quot}{Quot}
\DeclareMathOperator{\rank}{rank}

\newcommand{\cF}{\mathcal{F}}

\newcommand{\cM}{\mathcal{M}}
\newcommand{\cN}{\mathcal{N}}
\newcommand{\cO}{\mathcal{O}}
\newcommand{\cP}{\mathcal{P}}

\newcommand{\cR}{\mathcal{R}}
\newcommand{\cS}{\mathcal{S}}

\newcommand{\fP}{\mathfrak{P}}
\newcommand{\fQ}{\mathfrak{Q}}

\newcommand{\m}{\mathfrak{m}}
\newcommand{\p}{\mathfrak{p}}
\newcommand{\q}{\mathfrak{q}}

\newcommand{\bC}{\mathbb{C}}

\newcommand{\bN}{\mathbb{N}}
\newcommand{\bP}{\mathbb{P}}
\newcommand{\bZ}{\mathbb{Z}}
\newcommand{\bR}{\mathbb{R}}
\newtheorem{Theorem}{Theorem}[section]
\newtheorem{Lemma}[Theorem]{Lemma}
\newtheorem{Corollary}[Theorem]{Corollary}
\newtheorem{Proposition}[Theorem]{Proposition}
\newtheorem{Remark}[Theorem]{Remark}

\newtheorem{Example}[Theorem]{Example}

\newtheorem{Notation}[Theorem]{Notation}
\newtheorem{Discussion}[Theorem]{Discussion}
\newtheorem{NotationDiscussion}[Theorem]{Notation and Discussion}

\makeatletter
\let\c@equation\c@Theorem
\makeatother

\begin{document}

\baselineskip=16pt

 \title[Behrend function and blowup algebras]{\bf 
Behrend function and blowup algebras}
\date\today

\author[ Claudia Polini, Alessio Sammartano, and Bernd Ulrich]
{ Claudia Polini, Alessio Sammartano, and Bernd Ulrich}

\thanks{MSC 2020 {\em Mathematics Subject Classification}.
Primary 13A30
Secondary 05E40, 11D04, 13A18, 13B22, 13F55, 13F65, 13H15, 14B05, 14M25, 20M14, 52B20.
}

\thanks{
The first and third authors were partially supported by NSF grants DMS-2502707 and DMS-2502706, respectively.
The second author  was  supported by the grants
PRIN2020355B8Y \emph{Squarefree Gr\"obner degenerations, special varieties and related topics} and 
PRIN 2022K48YYP \emph{Unirationality, Hilbert schemes, and singularities},
and is a member of  INdAM – GNSAGA}

\thanks{Keywords: Behrend function, multiplicity, Rees ring, Rees valuations, special fiber ring, associated graded ring, monomial algebras}

\address{Department of Mathematics, 
University of Notre Dame,
Notre Dame, IN 46556, USA} \email{cpolini@nd.edu}

\address{Dipartimento di Matematica,
Politecnico di Milano,
Milano, 20133, Italy}
\email{alessio.sammartano@polimi.it}

\address{Department of Mathematics,
Purdue University,
West Lafayette, IN 47907, USA}\email{bulrich@purdue.edu}

\begin{abstract}
Given a scheme $X$ of finite type over the complex numbers, the {\it Behrend function} is a constructible function 
$\nu_X: X(\mathbb C) \rightarrow \mathbb Z \, $
introduced by Behrend in order to   define enumerative invariants in Donaldson--Thomas theory. 
Even in simple cases, the Behrend function is very difficult to compute.
In this article, we tackle the problem of computing the Behrend function of zero-dimensional schemes.
We obtain a number of explicit formulas, in particular, for arbitrary zero-dimensional monomial schemes, 
thus providing vast generalizations of previous work of Graffeo--Ricolfi.
Our main tools come from the theory of blowup algebras. 
Along the way, we establish results of independent interest related to the integer decomposition property, weighted Veronese subrings, and reduced fiber rings.
\end{abstract}
 
\maketitle

\section{Introduction}\label{SectionIntro}

The Behrend function, introduced  in~\cite{Beh}, is a 
 constructible function $\nu_X \colon X(\bC) \to \bZ$ canonically associated 
to any scheme $X$ of finite type over the complex numbers. 
Serving as a refined measure of the singularities of $X$,
a key feature of $\nu_X$ is  the identity
\[
\deg [X]^{\mathrm{vir}} \;=\; \sum_{m \in \bZ} m \cdot \chi\!\left(\nu_X^{-1}(m)\right),
\]
which expresses the degree of the virtual fundamental class of $X$ as a 
weighted Euler characteristic
when $X$ is proper and admits a symmetric perfect obstruction theory.
Unlike its standard counterpart, 
this weighted Euler characteristic is highly sensitive to the local schematic structure of $X$.
Moreover,
while the left-hand side of this identity requires $X$ to be proper to be meaningful, 
the right-hand side is well-defined without any properness assumption, 
allowing a definition of Donaldson--Thomas invariants on non proper schemes. 
Consequently, the introduction of the Behrend function underlies a wide range of developments in 
modern enumerative geometry; see~\cites{Pandharipande, Szendroi} for surveys.

In spite of its importance, the Behrend function has proven 
remarkably difficult to compute, even in seemingly simple situations. 
A special case is
when $X$ is locally the critical locus of a regular function $f$ on a smooth scheme;
for such schemes, $\nu_X$
 can be expressed in terms of the Milnor fiber of $f$.
A notable example of this setup is the Hilbert scheme of points of a smooth Calabi-Yau threefold
\cites{BBS,Ricolfi},
for which the conjectured constancy of $\nu_X$ was recently disproved
\cite{JKS}.
Another   class of schemes for which $\nu_X$ can be computed is zero-dimensional complete intersections.
Beyond these specific classes, however, the general problem has  remained largely out of reach.
A breakthrough came with the work of 
Graffeo and Ricolfi~\cite{Graffeo}, who carried out the first 
systematic study of the Behrend function for zero-dimensional schemes 
defined by ideals in two variables, obtaining explicit formulas in 
several 
cases: 
monomial ideals that are the integral closure of ideals generated by pure powers of the variables,
a class of ideals they call \emph{towers}, and 
products of towers. Their methods rely on a fine analysis of blowups, 
normalizations, and the Dynkin diagrams of exceptional curves, drawing 
heavily on toric geometry techniques specific to surfaces. As a 
consequence, the bulk of their results is restricted to ideals in 
$\bC[x,y]$; in higher dimensions, as they themselves observe~\cite{Graffeo}*{Section~7}, 
their toric methods are not directly applicable, and a finer analysis 
is required.

In this paper, we revisit and substantially extend their results, 
guided by a different perspective: \emph{the Behrend function of a 
zero-dimensional scheme is governed by the blowup algebras of its 
defining ideal.} This point of view is the conceptual contribution of the paper. Rather than relying on techniques specific to surfaces, we show that the fundamental invariants governing the Behrend function arise naturally from the blowup algebra, allowing the theory to extend uniformly to arbitrary embedding dimension.
More precisely, let $I \subset R = \bC[x_1,\ldots,x_n]$ be a zero-dimensional ideal.
Without loss of generality, we may assume that $I$ is primary to $(x_1, \ldots, x_n)$.
The Behrend function of $X=\mathrm{Spec}(R/I)$ is determined by the Rees ring 
${\mathcal R}(I) = R[It]$, its integral closure $\overline{\mathcal R(I)}$, 
and the associated graded ring $G = \mathrm{gr}_I(R).$
This shift of perspective 
allows us to employ the full machinery of commutative 
algebra 
and yields formulas of considerably greater generality, 
valid in any embedding dimension, with proofs that are both shorter 
and more conceptually transparent.

\subsection*{Main results}
Let $I \subset R$ be an ideal primary to $(x_1, \ldots, x_n)$.
For simplicity, we write $\nu_I$ for the value of the  Behrend function of  $X=\mathrm{Spec}(R/I)$ at the unique closed point.
Our starting point is Behrend's formula \eqref{EqDefBehrendFunction} expressing $\nu_I$ in terms of 
the minimal primes of $G$. For any ideal $I \subset R$ we prove 
(Proposition~\ref{PropLength}) the length formula
\[
\lambda(G_{\fP}) \;=\; \sum_j v_j(I) \cdot 
\bigl[(\overline{\mathcal{R}}/{\fQ}_j) : (\mathcal{R}/{\fP})\bigr],
\]
where $\fP$ is a minimal prime of $I\mathcal{R}$, the $\fQ_j$ are the primes 
of $\overline{\mathcal{R}}$ lying over $\fP$, and the $v_j$ are the 
corresponding Rees valuations of $I$. 
This  isolates the degrees of the residue field 
extensions as the central quantities to be computed.

The technical heart of the paper is the explicit determination of these 
field degrees for monomial ideals in any number of variables. When $I$ 
is monomial, the Rees valuations of $I$ correspond to the non-coordinate 
supporting hyperplanes of the Newton polyhedron ${\rm NP}(I)$, and the centers 
of these valuations are generated by subsets of the variables that can be easily obtained 
from the hyperplanes. Exploiting this combinatorial structure together 
with Ehrhart theory and a careful analysis of the relationship between 
Veronese subrings of $R$ and the Ehrhart ring of the relevant facets of 
${\rm NP}(I)$, we obtain (Theorem~\ref{TheoremBehrensMonomials}) the formulas
\[
\nu_I \;=\; \sum_{i} v_i(I) \cdot |\mathcal{I}_n(\log \mathcal{A}_i')|
\;=\; \sum_{i} v_i(I) \cdot |\mathcal{I}_{n-1}(\log \mathcal{A}_i'')|,
\]
where the sum runs over all Rees valuations $v_i$ of $I$, 
$\log \mathcal{A}_i'$, $\log \mathcal{A}_i''$ are explicit $\mathbb{Z}$-valued \emph{log matrices} 
attached to the corresponding compact facets of ${\rm NP}(I)$,
and $|\mathcal{I}_p(-)|$ denotes the $\gcd$ of the minors of size $p$. 
Thus, the entire 
Behrend function is encoded by the Rees valuations together with a 
 matrix invariant for each valuation. This formula applies to 
\emph{every} zero-dimensional monomial ideal in any number of 
variables, vastly generalizing the formulas of~\cite{Graffeo}, which 
apply to ideals in $\bC[x,y]$.

\subsection*{The normal case: short proofs}

An immediate consequence of our main formula 
(Corollary~\ref{CorFormulaBehrendWithFieldDegreesNormalCase}) is that 
for an arbitrary normal zero-dimensional ideal one has the  
simple expression
\[
\nu_I \;=\; \sum_v v(I),
\]
where the sum ranges over the Rees valuations of $I$. Combined with classical 
criteria for normality, this immediately gives closed formulas for several 
classes of ideals studied in~\cite{Graffeo}, with proofs that are 
considerably shorter. To illustrate:

\begin{itemize}
\item \emph{Integrally closed monomial ideals in two variables} 
(Examples~\ref{ExampleBehrendNormalTwoVariablesI} 
and~\ref{ExampleBehrendNormalTwoVariablesII}). The full formula follows from a quick calculation involving the slopes of the boundary of ${\rm NP}(I)$. 
The case of a single Rees valuation 
recovers~\cite{Graffeo}*{Theorem~B}, and the general case 
recovers~\cite{Graffeo}*{Theorem~5.13}.

\item \emph{Towers} (Example~\ref{ExBehrendTowers}). Our formula 
recovers~\cite{Graffeo}*{Theorems~3.11 and 3.13},
replacing the original combinatorial argument that occupies 
\cite{Graffeo}*{Section~3}.

\item \emph{Products of complete towers} (Examples \ref{ExProdTowersI} and \ref{ExProdTowersII}). Our results provide short proofs of the formulas of
\cite{Graffeo}*{Theorem~3.17}. 
\end{itemize}

In each case, our argument simply identifies the Rees valuations of 
the ideal---in some examples requiring a coordinate change, in others 
directly visible from the Newton polyhedron---and adds up the values 
$v(I)$. 

Even more strikingly,
we show that the same simple formula 
$\nu_I = \sum v(I)$ is valid for \emph{integrally closed} monomial ideals 
in three variables (Corollary~\ref{Corn3}), even though such ideals are not  normal in general.

\subsection*{An unexpectedly simple formula for ideals with one Rees 
valuation}

In Section~\ref{SectionOneVal}, we show
that zero-dimensional monomial ideals possessing only one Rees valuation behave 
remarkably well, essentially as if they were normal, even when they 
are far from being normal. 
This is unexpected for the following reason. 
In the fourth formula of Theorem~\ref{TheoremBehrensMonomials},
each Rees valuation $v_i$ of $I$ contributes
 a factor $|\mathcal{I}_n(\log \mathcal{A}_i)|/|\mathcal{I}_n(\log \mathcal{C}_i)|$ 
as well as   an auxiliary positive integer $s_i$,
which we call the \emph{birational Veronese index} of $I$ relative to  $v_i$.
The whole contribution reduces to the much simpler quantity
$|\mathcal{I}_n(\log \mathcal{A}_i)|/|\mathcal{I}_n(\log \mathcal{B}_i)|$ 
precisely when a certain Veronese-type extension is birational, or equivalently, when $s_i = 1$. 
In general, 
this birationality fails, even for integrally closed zero-dimensional monomial ideals (see 
Example~\ref{E5.10}). Surprisingly, however, we prove that it always 
holds for zero-dimensional monomial ideals having a single Rees valuation, i.e. for monomial ideals integral over an ideal generated by powers of the variables (Corollary ~\ref{CorBiratVerIndexOneVal}). 

The proof rests on a result in additive combinatorics 
(Theorem~\ref{ThmSameSubgroups}): given positive integers $\ell_1, \ldots, \ell_n$ and a positive common multiple $M$, the set of nonnegative integer 
solutions of the equation $\, \sum \ell_j u_j = M$ generates the same subgroup of $\bZ^n$ 
as the set of  solutions of the linear congruence $\, \sum \ell_j u_j \equiv 0 \  {\rm mod} \, M$. 
This result is remarkable, 
as it is a weaker version of the \emph{integer 
decomposition property}, which is well-known to fail in general.
In terms of weighted Veronese subrings, the result may be reformulated as follows (Corollary \ref{CorBirationalVeroneseSubring}):
if $R$ is a positively graded polynomial ring, and  $R^{(M)}$ is a Veronese subring, where $M$ is divisible by the degrees of all the variables,
then the extension $k[R_M] \subset R^{(M)}$ is birational.

As a consequence, for a zero-dimensional monomial ideal $I$ with a single Rees valuation, 
the formula for $\nu_I$ collapses dramatically. Specifically, 
Theorem~\ref{Theoremonlyone} gives the formula
\[
\nu_I \;=\; |\mathcal{I}_n(\log \mathcal{A})|
\]
involving only the log matrix of the generators of $I$ whose exponent vectors lie on the 
unique compact facet of ${\rm NP}(I)$. In particular, for  
$I = (x_1^{a_1},\ldots,x_n^{a_n})$ we recover the simple 
identity
\[
\nu_I \;=\; \mathrm{lcm}(a_1,\ldots,a_n),
\]
which generalizes~\cite{Graffeo}*{Theorem~B} from $n=2$ to arbitrary number of variables (Corollary~\ref{Onlyone}).

\subsection*{The reduced special fiber ring of a monomial ideal}

The techniques developed for computing Behrend functions also lead to 
new structural results for the special fiber ring of an arbitrary 
monomial ideal,
not necessarily zero-dimensional.
In Section~\ref{SectionSpecialFiber},
we describe the minimal primes and the reduced structure of 
the special fiber ring $\cF(I)=\mathcal R(I) \otimes_R k$ for any monomial ideal $I$, in terms of the facets of 
the Newton polyhedron ${\rm NP}(I)$ 
(Theorem~\ref{ThmReducedSpecialFiberIdealDescription} and Corollary~\ref{Corollary6.12}). 
This generalizes work of Singla~\cite{Singla}, who treated the case of 
\emph{extremal} ideals, those equal to their minimal monomial 
reduction, in terms of the maximal compact faces of ${\rm NP}(I)$.

The two approaches differ in a substantive way that is worth pointing out. In Singla's setting, each constituent ideal is generated by the 
monomials whose exponents lie on a fixed maximal compact face of 
${\rm NP}(I)$; in fact, because the ideal $I$ is extremal, these exponents are 
precisely the vertices of that maximal compact face. Thus, each such 
ideal coincides with its own minimal reduction, so its special fiber 
ring is a polynomial ring and agrees with the corresponding toric ring. 
For arbitrary monomial ideals, neither of these properties holds: our 
constituent ideals $I_i$ are indexed by Rees valuations rather than 
by maximal compact faces, and the special fiber rings $\mathcal{F}(I_i)$ need not 
coincide with the corresponding toric rings (see Example~\ref{Ex7.15}). 
To handle this, we work directly with the special fiber rings 
$\cF(I_i)$ and show that even when they fail to be toric, 
their nilradicals can be controlled. In the zero-dimensional case, 
where Rees valuations correspond to maximal compact faces, 
our description specializes to a genuine generalization of Singla's 
formula from extremal to arbitrary zero-dimensional ideals, and raises the natural 
question of whether her formula admits an extension to monomial ideals of arbitrary dimension that are not extremal.

\subsection*{Independent interest}

We have aimed to make most of our commutative algebra results 
self-contained and to state them in their natural generality, even 
when this exceeds what is strictly needed for the  
applications to the Behrend function. We expect that several of these results, in particular 
the birationality theorem for weighted Veronese subalgebras 
(Theorem~\ref{ThmSameSubgroups}, Corollary~\ref{CorBirationalVeroneseSubring})
and the 
structure theorem for the reduced special fiber ring of a monomial 
ideal
(Theorem~\ref{ThmReducedSpecialFiberIdealDescription}, will be of independent interest.

\subsection*{Organization of the paper}
Section~\ref{SectionBehrend}
establishes the general length and Behrend function formulas in terms 
of Rees valuations. Section~\ref{SectionBehrendNormal} develops the theory in the normal case, recovering and 
extending the formulas of~\cite{Graffeo} through several worked 
examples. Section~\ref{SectionBehrendMonomial} is the technical core of the paper, where we prove the main formulas for arbitrary zero-dimensional 
monomial ideals via Ehrhart theory and log matrices. 
Section~\ref{SectionOneVal} establishes the 
birational Veronese theorem and derives the simplified formula for 
ideals with a single Rees valuation. Finally, 
Section~\ref{SectionSpecialFiber} treats the 
structure of the reduced special fiber ring of a monomial ideal.

\medskip

\section{The Behrend function}\label{SectionBehrend}

In this section, 
we establish the foundational formulas that govern 
all subsequent computations in the paper. 
Our point of departure is Behrend's formula for $\nu_I$.
 Let $R=\bC[x_1,\ldots,x_n]$ be a polynomial ring,  let $I\subset R$ be an  ideal, and write $A=R/I.$  Let  $G = \mathrm{gr}_I(R)$  be the associated graded ring of $I,$ and let $\mathcal M$ be the set of minimal primes of $G,$ which all have dimension $n.$ 
 Let $\nu_I$ denote the  Behrend function of  $X=\mathrm{Spec}(R/I)$.
 By \cite{Beh}, the Behrend function satisfies
 \begin{equation}\label{EqDefBehrendFunction} \nu_I= \sum_{\mathfrak{P}\in \mathcal M} (-1)^{\dim(A/\mathfrak{P} \cap A)} \, \lambda(G_\mathfrak{P}) \cdot {\rm Eu}([\mathfrak{P} \cap A]) \, , 
 \end{equation}
 where $\lambda$ denotes length and ${\rm Eu}$ is MacPherson's local Euler obstruction \cite{Mac74}, which associates to every  cycle on $X=\Spec(A)$, such as $[\mathfrak{P} \cap A],$ a constructible function $X \rightarrow \mathbb Z.$
  For a zero-dimensional ideal $I$, these local Euler obstructions are identically equal to one, so the entire content of the formula lies  in the lengths $\lambda(G_\fP)$ and the combinatorics of how the  minimal primes of~$G$ relate to the geometry of the blowup. 
Translating both quantities into the language of Rees rings and 
Rees valuations is the conceptual move that drives this paper.

To this end let, for simplicity, $R$ be a Noetherian ring that is normal, universally catenary, and locally analytically unramified, and let $I\subset R$ be an ideal of positive height. 
Consider the chain of inclusions
\[
R \,\subset\, \cR = R[It] \,\subset\, \overline{\cR}= \bigoplus_{j \geq 0} \,\overline{I^j}t^j \,\subset\, R[t],
\]
where $t$ is a variable, $\cR$ is the {\it Rees ring} of $I,$ and 
$\overline{\cR}$ denotes its integral closure. Since 
$G =\mathrm{gr}_I(R)= \cR/I\cR,$ 
we may identify the set of 
minimal primes of $G$ with the set $\cM$ of minimal primes of $I\cR.$ 
These are exactly the contractions of the minimal primes of 
$I \overline{\cR},$ and we write $\cN$ for this set. In turn, the primes
$Q \in \cN$ correspond to the {\it Rees valuations} of $I,$ the valuations 
$v_Q$ of the discrete valuation rings $\overline{\cR}_Q.$ Notice that the 
primes of $\cM$ and $\cN$ are precisely the centers of the Rees valuations
on $\cR$ and on $\overline{\cR},$ respectively. More commonly, the Rees valuations 
are defined as the restrictions of the valuations $v_Q$ to ${\rm Quot}(R),$
but we prefer to work with the rings $\cR$ and $\overline{\cR}$
rather than their dehomogenizations in order to take advantage of the grading.
Notice that $$\overline{\cR}=\bigcap_{Q\in \cN}{\overline{\cR}}_Q \cap R[t] \, ,$$
and that $It \not\subset Q,$ which gives $v_Q(It)=0$ and hence $v_Q(t)=-v_Q(I).$
One deduces that an element $\alpha \in R$ belongs to $\overline{I^j}$ if and only if $v_Q(\alpha) \geq j \cdot v_Q(I)$ for every $Q \in \cN;$ in other words,
the Rees valuations control the integral closures of $I$ and its powers.

Within this framework, our first main result 
(Proposition~\ref{PropLength}) is a general length formula 
expressing the length $\lambda(G_\fP)$  for any $\fP \in \cM$ as a 
sum of contributions from the Rees valuations weighted by residue 
field degrees. Combined with~\eqref{EqDefBehrendFunction}, this 
yields the formula
\[
\nu_I = \sum_{i,j} v_{i,j}(I) \cdot 
\bigl[\overline{\cR}/\fQ_{i,j} : \cR/\fP_i\bigr]
\]
of Theorem~\ref{TheoremFormulaBehrendWithFieldDegrees}.
Here, $\fP_i \in \cM,$ each $\fQ_{i,j} \in \cN$ lies over $\fP_i,$ and $v_{i,j}=v_{Q_{i,j}};$ moreover for an inclusion $A \subset B$  of domains, we denote by $[B:A]$ the degree of the field extension $\Quot(A)\subset \Quot(B).$
This formula is the central 
identity of the paper: the rest of our work is devoted to making 
it computationally effective by determining the Rees valuations 
$v_{i,j}$ and the residue field degrees explicitly.

We then turn to the case of homogeneous ideals, where the residue 
field degree admits a particularly transparent interpretation in 
terms of the multiplicity of a standard graded subalgebra. Building 
on a study of the reduced special fiber ring $\cF(I)/\sqrt{0},$ 
which we show coincides with the toric ring $k[I_d]$ when $I$ is 
integral over its initial-degree component $I_dR$ 
(Proposition~\ref{PropReducedFiberHomogeneousIdeal}), we obtain the 
closed formula
\[
\nu_I = d \cdot [\bC[\m_d] : \bC[I_d]] = \frac{d^n}{e(\bC[I_d])}
\]
of Theorem~\ref{ThmFormulaBehrendHomogeneous}, valid for all 
zero-dimensional homogeneous ideals $I \subset \bC[x_1,\ldots,x_n]$ 
integral over $I_dR.$ This class is considerably broader than the 
class of ideals generated in a single degree; for instance, it 
includes every ideal $I$ with $(x_1^d,\ldots,x_n^d) \subset I 
\subset (x_1^d,\ldots,x_n^d) + \m^{d+1},$ all of which share the 
same Behrend function $\nu_I = d^n$ (Remark~\ref{RemSqueeze}).

\begin{Proposition}\label{PropLength}
Let $R$ be a Noetherian universally catenary locally unramified normal ring and $I$ be an ideal
of positive height. By $\cR$ we denote the Rees ring of $I,$ by $\overline{\cR}$ the
integral closure of $\cR,$ and by $G=\cR/I \cR$ 
the associated graded ring of $I.$ Let
$\fP \in \Spec({\cR})$ be a minimal prime
of the ideal $I\cR.$ Let $\fQ_j \in \Spec({\overline{\cR}})$ be the primes lying over $\fP$ and $v_j$ be the corresponding 
Rees valuations of $I.$ Then 
\[\lambda(G_{\fP}) = \sum_{j}v_j(I) \cdot 
\left[(\overline{\cR}/\fQ_j):(\cR/\fP)\right]  . \]
\vspace{-.28cm}
\end{Proposition}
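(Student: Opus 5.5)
The approach is to compare lengths after localizing at $\fP$, using the finite extension $\cR_\fP \subset \overline{\cR}_\fP$. Set $S=\cR\setminus\fP$, $B=\cR_\fP$ and $C=\overline{\cR}_\fP = S^{-1}\overline{\cR}$.

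First we check the shape of $B$ and $C$.
Since $R$ is locally analytically unramified, $\overline{\cR}$ is a finite $\cR$-module, so $C$ is a finite $B$-module. Moreover $B$ and $C$ have the same total ring of fractions.
Since $R$ is universally catenary and $\fP$ is minimal over $I\cR$ (a principal-like ideal, being $It^{-1}$-related), the prime $\fP$ has height one. More precisely, $\cR$ is equidimensional and catenary, and $\dim \cR/\fP = \dim R$ by the dimension formula, so $\dim B=1$.
The maximal ideals of $C$ are the $\fQ_j C$. Each localization $C_{\fQ_j} = \overline{\cR}_{\fQ_j}$ is a DVR with valuation $v_j$, and $C$ is a one-dimensional normal semilocal domain, hence a PID.

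The key step is to pass to the extended Rees algebra to make $I\cR$ principal up to units. Let $T = R[It,t^{-1}]$ and set $u=t^{-1}$. Then $IT\cap \cR$ and $uT$ agree after localizing at primes containing $I$ but not $It$. Since $It\not\subset\fP$, some $at\in S$ with $a\in I$. Then in $B$ we have $I B = a B$: for $b\in I$, $b = a\cdot(bt)/(at)$ with $bt\in \cR$. So $IB$ is principal, generated by $a$.
Hence $\lambda(G_\fP)=\lambda_B(B/aB)$, with $a$ a nonzerodivisor in the one-dimensional local domain $B$.

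Next we transfer the length to $C$.
For a finite birational extension $B\subset C$ of one-dimensional Noetherian local domains and a nonzero $a\in B$, one has
\[\lambda_B(B/aB)=\lambda_B(C/aC).\]
To see this, use the exact sequences $0\to B\to C\to C/B\to 0$ and multiplication by $a$. The module $C/B$ has finite length, since it is finitely generated and killed by the conductor, which is nonzero. Then $\lambda(C/B)=\lambda(a C/aB)$, and additivity on $aB\subset aC\subset C$ and $aB\subset B\subset C$ gives the equality.
Then we decompose
\[\lambda_B(C/aC)=\sum_j \lambda_B(C_{\fQ_j}/aC_{\fQ_j}).\]
For each $j$ we have $\lambda_{C_{\fQ_j}}(C_{\fQ_j}/aC_{\fQ_j}) = v_j(a)$. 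Converting $B$-length to $C_{\fQ_j}$-length multiplies by the residue degree $[\kappa(\fQ_j):\kappa(\fP)] = [(\overline{\cR}/\fQ_j):(\cR/\fP)]$.

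Finally, $v_j(a)=v_j(I)$. Indeed $IC=aC$ shows $v_j(a)=\min_{b\in I} v_j(b)=v_j(I)$; alternatively, $v_j(at)=0$ because $at$ is a unit at $\fQ_j$, and $v_j(t)=-v_j(I)$ as noted before the statement.

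The main obstacle I expect is the dimension bookkeeping: showing $\fP$ has height one, which needs universal catenarity, and that the $\fQ_j$ over $\fP$ are exactly the height-one primes of $\overline{\cR}$ over $\fP$. Both should follow from the dimension formula for $\cR\subset\overline{\cR}$ (finite) and going-up. The rest is standard length manipulation.
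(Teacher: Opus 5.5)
Your proposal is correct and is essentially the paper's proof. You make $I\cR_\fP$ principal, use that $\overline{\cR_\fP}/\cR_\fP$ has finite length and is isomorphic to $a\overline{\cR_\fP}/a\cR_\fP$ to get $\lambda(G_\fP)=\lambda(\overline{\cR_\fP}/I\overline{\cR_\fP})$, and then split by the Chinese Remainder Theorem and convert lengths via residue degrees.

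The step you flag as the main obstacle is in fact immediate. Once $IB=aB$ with $a\neq 0$, Krull's principal ideal theorem gives $\dim B=1$, so the dimension-formula detour is unnecessary. Also, finiteness of $C$ over $B$ should come from Rees's theorem applied to the analytically unramified local ring $R_{\fP\cap R}$, rather than being asserted for $\overline{\cR}$ over $\cR$ globally.
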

\begin{proof} As $\fP$ does not contain the irrelevant ideal of $\cR,$ we have 
$I\cR_{\fP}= f\cR_{\fP}$ for a regular element $f \in \cR_{\fP}.$ Since
$R_{\fP \cap R}$ is an analytically unramified local domain, $\overline \cR_{\fP}
=\overline{\cR_{\fP}}$ 
is a finite $\cR_{\fP}$-module, and therefore 
$\overline{\cR_{\fP}}/\cR_{\fP}$ has finite length over $\cR_{\fP}$ 
because $\dim \, \cR_{\fP} =1.$ 
Now, the isomorphism $\overline{\cR_{\fP }}/\cR_{\fP} \cong f\overline{\cR_{\fP}}/f\cR_{\fP}$ and the diagram

\begin{center}
\begin{tikzpicture}[scale=1]

  \node (top)    at (0,1.5)   {$\overline{\mathcal R_{\fP}}$};
  \node (left)   at (-2,0)    {$\mathcal R_{\fP}$};
  \node (right)  at (2,0)     {$I\overline{\mathcal R_{\fP}}= f\overline{\mathcal R_{\fP}}$};
  \node (bottom) at (0,-1.5)  {$I \mathcal R_{\fP} = f \mathcal R_{\fP}$};

  \draw[thick] (top) -- (left);
  \draw[thick] (bottom) -- (right);

  \draw[thick] (top) -- (right);
  \draw[thick] (bottom) -- (left);

\end{tikzpicture}

\end{center}

\noindent
show that $\lambda(G_{\fP})=\lambda_{\cR_{\fP}}(\cR_{\fP}/I \cR_{\fP})=
\lambda_{\cR_{\fP}}(\overline{\cR_{\fP}}/I \overline{\cR}).$ On the other hand,

\[
\begin{array}{rcll}
\lambda_{{\mathcal R}_\fP}\!\left(\overline{{\mathcal R}_\fP} /
                    I\overline{{\mathcal R}_{\fP}}\right) 
                &=& 
                \displaystyle\sum_{j}
                    \lambda_{{\mathcal R}_{\fP}}\!\left(\overline{{\mathcal R}_{\fQ_j}} /
                    I\overline{{\mathcal R}_{\fQ_j}}\right) \qquad \text{by the Chinese  Remainder Theorem}\\[2mm]

                &=& 
                   \displaystyle\sum_{j}
                    \lambda_{\overline{{\mathcal R}_{\fQ_j}}}\!\left(\overline{{\mathcal R}_{\fQ_j}} /
                    I\overline{{\mathcal R}_{\fQ_j}}\right) \cdot \left[\overline{{\mathcal R}_{\fQ_j}} /
                    \fQ_i\overline{{\mathcal R}_{\fQ_j}} \, : \, {\mathcal R}_\fP / \fP{\mathcal R}_\fP\right]\\[2mm]
                    
                      &=& 
                  \displaystyle\sum_{j} \ v_j(I)
                   \cdot \left[ \overline{\mathcal R}/\fQ_i : \mathcal R/\fP \right]\, .  
\end{array}
\] 
\end{proof}

The following  is a special case of Proposition \ref{PropLength},  essentially equivalent to \cite{Graffeo}*{Theorem D}.

\begin{Theorem}\label{TheoremFormulaBehrendWithFieldDegrees}
Let $R=\bC[x_1,\ldots,x_n]$ be a polynomial ring and $I\subset R$ be a zero-dimensional ideal.
Let $\fP_i$ be the minimal primes of $I\cR$,
$\fQ_{i,j}$ be the primes of $\overline{\cR}$ lying over $\fP_i$,
and $v_{i,j}$ be the Rees valuations of $I$ corresponding to $\fQ_{i,j}.$
Then 
\begin{equation}\label{EqTheoremFormulaBehrendWithFieldDegrees}
\nu_I = \sum_{i,j}  v_{i,j}(I)\cdot \left[\overline{\cR}/\fQ_{i,j}: \cR/\fP_i\right]. 
    \end{equation}
\end{Theorem}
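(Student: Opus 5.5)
The plan is to specialize Behrend's formula \eqref{EqDefBehrendFunction} to the zero-dimensional case and then substitute the length formula of Proposition~\ref{PropLength} term by term.

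First, I will check that $R=\bC[x_1,\ldots,x_n]$ satisfies the hypotheses of Proposition~\ref{PropLength}. A polynomial ring over a field is a regular domain, so it is normal and universally catenary. Being excellent, it is locally analytically unramified. A zero-dimensional ideal has height $n>0$, so $I$ has positive height (the case $n=0$ is vacuous).

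Next, I will simplify \eqref{EqDefBehrendFunction}. Here $A=R/I$ is Artinian, so for every $\fP\in\cM$ the prime $\fP\cap A$ is a maximal ideal of $A$. Hence $\dim(A/\fP\cap A)=0$ and the sign $(-1)^0$ equals $1$. As noted after \eqref{EqDefBehrendFunction}, the local Euler obstruction of the cycle $[\fP\cap A]$ is a point cycle. Its Euler obstruction is the indicator function of that point, and so equals $1$ at the point where $\nu_I$ is evaluated.

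One point needs care. If $I$ is supported at several closed points, each summand contributes only at its own point. In that case I will either localize at the given closed point, or reduce to $I$ primary to a maximal ideal, as the paper does in the introduction. I would then read both sides of \eqref{EqTheoremFormulaBehrendWithFieldDegrees} at that point, with the primes $\fP_i$ restricted to those lying over it. With this in place, $\nu_I=\sum_i \lambda(G_{\fP_i})$, the sum running over the minimal primes $\fP_i$ of $G$, which we identified with the minimal primes of $I\cR$.

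Finally, I will apply Proposition~\ref{PropLength} to each $\fP_i$. This gives $\lambda(G_{\fP_i})=\sum_j v_{i,j}(I)\,[\overline{\cR}/\fQ_{i,j}:\cR/\fP_i]$, and summing over $i$ yields \eqref{EqTheoremFormulaBehrendWithFieldDegrees}. The primes $\fQ_{i,j}$ lying over $\fP_i$ are exactly the minimal primes of $I\overline{\cR}$ over $\fP_i$. This holds because $\fP_i$ has height one, $\overline{\cR}$ is integral over $\cR$, and the going-down/incomparability properties apply to the normal ring $\overline{\cR}$; so they are indeed the centers of the Rees valuations, as set up before Proposition~\ref{PropLength}.

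The main obstacle is not algebraic. It is the bookkeeping of the multi-point case and justifying that the Euler obstruction term equals $1$, which the paper takes as given for zero-dimensional $I$. Everything else is direct substitution.
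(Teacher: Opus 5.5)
Your proposal is correct and is essentially the paper's argument. The paper's proof simply combines Behrend's formula \eqref{EqDefBehrendFunction}, in which every sign and every Euler obstruction equals $1$ for zero-dimensional $I$, with Proposition~\ref{PropLength} applied to each minimal prime $\fP_i$, and your treatment of the multi-point case by localizing at a closed point matches the paper's convention that $I$ is primary to a maximal ideal.

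One small correction to your last paragraph: going-down is not available for $\cR\subset\overline{\cR}$, since $\cR$ need not be normal. Incomparability alone already shows that every prime of $\overline{\cR}$ lying over a minimal prime $\fP_i$ of $I\cR$ is minimal over $I\overline{\cR}$, so the argument is unaffected.
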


\begin{proof}
This follows from Proposition \ref{PropLength} and formula \eqref{EqDefBehrendFunction}
\end{proof}

We now consider the problem of studying these lengths and blowup algebras in the general context of a homogeneous ideal of a graded ring.

\begin{Proposition}\label{PropReducedFiberHomogeneousIdeal}
Let $R$ be a non-negatively graded Noetherian algebra over a field $k.$
Let $I\subset R$ be a homogeneous ideal with initial degree $d$ and 
denote by $I_d$ its graded component of degree $d.$ Consider the 
ring $k[I_d] \subset R$ and the natural maps of standard graded 
$k$-algebras 
\[
\begin{tikzcd}[column sep=2em, ampersand replacement=\&]
k[I_d] \arrow[r, two heads, "\varphi_1"] \& \cF(I_dR) \arrow[r, "\varphi_2"] \& \cF(I)/\sqrt{0} \, ,
\end{tikzcd}
\]
where $\varphi_1$ is surjective.

If $I$ is integral over $I_dR,$ then also $\varphi_2$ is surjective, and if
in addition $R$ is reduced and positively graded, then $\varphi_1$ and $\varphi_2$ 
are isomorphisms.
\end{Proposition}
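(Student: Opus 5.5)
First I fix notation and describe the maps. Write $\cF(I)=\bigoplus_{j\ge 0} I^j/\m I^j$, where $\m$ is the homogeneous maximal ideal $R_+ + \ldots$. More precisely, I take $\cF(I)=\cR(I)\otimes_R k$ with $k=R/R_+$ when $R$ is non-negatively graded, and $R_0$ should be handled appropriately; the cleanest choice is $\cF(I)=\cR(I)/\m\cR(I)$ with $\m$ the ideal generated by $R_+$ and a maximal ideal of $R_0$, but I will phrase the argument through degrees. The key observation is that $I^j$ has initial degree $jd$, and its degree-$jd$ component is $(I_d)^j=(I_dR)^j_{jd}$. The map $\varphi_1$ sends $k[I_d]$, generated by $I_d$ in degree one, onto $\cF(I_dR)$, which is generated in degree one by the image of $I_d$. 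The map $\varphi_2$ is induced by the inclusion $\cR(I_dR)\subset\cR(I)$ and then reduction. Surjectivity of $\varphi_1$ is clear.

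For surjectivity of $\varphi_2$, assuming that $I$ is integral over $J=I_dR$, I argue as follows. Since $\cR(I)$ is integral over $\cR(J)$ and is finite, the reduction number gives $I^{j+r}=J^{j}I^r$ for some fixed $r$ and all $j$. Hence $\cF(I)$ is a finite $\cF(J)$-module, so every element of $\cF(I)$ of positive $t$-degree is integral over the image of $\cF(J)$. The real claim is stronger: modulo nilpotents, $\cF(I)_j$ is spanned by the image of $J^j$. I use the internal grading. An element of $I^j$ of internal degree greater than $jd$ has image $ut^j$ in $\cF(I)$. Since $I^{j(s)}\subset J^{js-r}I^r$, I get $(ut^j)^s\in I^{js}$ of internal degree $s\deg u > jsd$. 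It then suffices to show that such an element lies in $\m I^{js}$ for $s\gg0$. Writing $I^{js}=J^{js-r}I^r$, an element of internal degree $> jsd$ is a combination of products $a b$ with $a\in J^{js-r}$ (a sum of degree-$(js-r)d$ forms, times $R$) and $b\in I^r$. Internal degree bookkeeping shows that in every term either the coefficient from $R$ has positive degree, and so lies in $R_+\subset\m$, or $b$ has degree $> rd$. The degree of $b$ is bounded, since $I^r$ is finitely generated, and the excess degree grows linearly in $s$. For large $s$, then, the excess cannot be absorbed by $b$ and must come from an $R_+$ coefficient. This makes $(ut^j)^s\in\m I^{js}$, so $ut^j$ is nilpotent. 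Consequently $\cF(I)/\sqrt0$ is generated by the images of the degree-$jd$ parts, which are $J^j_{jd}$, giving surjectivity of $\varphi_2$.

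For the isomorphisms when $R$ is reduced and positively graded, so that $R_0=k$, it suffices to show that the composite $\varphi_2\varphi_1$ is injective. If $g\in k[I_d]$ is homogeneous of $t$-degree $j$, it is a form of internal degree $jd$ in $I^j$. Suppose its image in $\cF(I)$ is nilpotent, that is, $g^s\in\m I^{js}$. Then $g^s$ has internal degree $jsd$, while every element of $\m I^{js}=R_+I^{js}$ has all components of degree $>jsd$, because $I^{js}$ has initial degree $jsd$ and $R$ is positively graded. Hence $g^s=0$, and since $R$ is reduced, $g=0$. This proves injectivity of the composite, so both maps are injective; combined with surjectivity, both are isomorphisms.

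The main obstacle is the nilpotence step in the surjectivity argument: correctly handling $\m$ when $R_0$ is not a field, and making the linear-growth degree argument rigorous. I would first reduce to a statement in internal degrees, namely that $\cF(I)_j$ modulo the image of $J^j$ consists of elements whose powers land in strictly higher internal degree. I would then use that $\cF(I)$ is finite over $\cF(J)$ with bounded generator degrees.
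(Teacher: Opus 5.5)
Your proof is correct. The injectivity half is exactly the paper's argument: a form $g$ of degree $jsd$ cannot lie in $\m I^{js}=R_+I^{js}$ unless $g^s=0$, and then reducedness gives $g=0$.

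For surjectivity of $\varphi_2$ you take a different route.

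\emph{The paper's route.} It takes, for a homogeneous $f\in I$ with $\deg f>d$, a homogeneous equation of integral dependence $f^n+a_1f^{n-1}+\cdots+a_n=0$ over $J=I_dR$. Since $a_i\in J^i$ has degree $i\deg f>id$, its coefficients with respect to products of elements of $I_d$ have positive degree. Hence $a_i\in \m J^i\subset \m I^i$, and $f^n\in\m I^n$ in one step, with an explicit nilpotency exponent.

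\emph{Your route.} You use the reduction identity $I^{js}=J^{js-r}I^r$ and a growth estimate. The generators $gb$ of $I^{js}$ exceed degree $jsd$ by at most $D-rd$, where $D$ bounds the degrees of generators of $I^r$. By contrast, $u^s$ exceeds $jsd$ by $sN$, with $N=\deg u-jd\ge 1$. So for $sN>D-rd$ every homogeneous coefficient has positive degree, and $u^s\in\m I^{js}$.

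\emph{Comparison.} Your argument is rigorous once it is written with homogeneous coefficients, and the underlying mechanism is the same: positive-degree coefficients lie in $R_+\subset\m$. The difference is that you replace the integral equation by the finiteness of $\cR(I)$ over $\cR(J)$. That costs extra bookkeeping of $r$ and $D$ and gives only an asymptotic exponent. Handling every $t$-degree $j$ is also unnecessary, since $\cF(I)/\sqrt{0}$ is generated in degree one.

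Finally, your concern about $R_0$ not being a field is moot for this step. Any maximal homogeneous ideal contains $R_+$, because $1\in\m+R_+$ would force $1\in\m$ after taking degree-zero components. Containment of $R_+$ in $\m$ is all either argument uses.
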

\begin{proof}
Let $\m$ denote the maximal homogeneous ideal of $R.$ 

To prove that $\varphi_2$ is surjective, it suffices to show that the image of every homogeneous element $f \in I \setminus I_d$ in $\cF(I)/\sqrt{0}$
is zero. 
By assumption, there is an equation of integrality in $R,$
$$
f^n+a_1f^n+\cdots + a_n = 0 \, ,
$$
where $n >0$ and $a_i \in (I_d R)^i$ are homogeneous of degree equal to $i \deg(f).$ 
As $i \deg(f) > id,$ we notice that 
$a_i \in \m(I_dR)^i\subset \m I^i.$ As a consequence we obtain
$f^n \in \m I^n.$ Therefore $f+\m I $ is nilpotent in $\cF(I)$, in other words, 
the image of $f+\m I $ in 
$ \cF(I)/\sqrt{0}$ is zero, as asserted.

To show that $\varphi_1$ and $\varphi_2$ are isomorphisms it suffices to prove that 
$\varphi:=\varphi_2 \circ \varphi_1$ is injective. Let $g$ be a homogeneous element of degree $t$ in 
the standard graded $k$-algebra $k[I_d].$
Suppose that $\varphi(g)=0.$ This means there exists $b \in \bN$ so that $g^b \in \m I^{bt}.$ 
However, $g^b$ is homogeneous of degree $btd$ in $R$, whereas $\m I^{bt}$ is generated in degrees at least $btd+1$ as $R$ is positively graded.
Thus $g^b=0$ and,
since $R$ is reduced, we conclude that $g=0,$ as required.
\end{proof}

\begin{Proposition}\label{PropLengthHomogeneousDomain}
    Let $R$ be a positively graded Noetherian domain over a field $k.$
Let $I\subset R$ be a zero-dimensional homogeneous ideal with initial degree $d,$
and assume that $I$ is integral over the ideal $I_dR.$
Let $G$ denote the associated graded ring of $I$.
Then  $G/\sqrt{0}$ is a domain and
$$
\lambda\left( G_{\sqrt{0}} \right)  = \frac{e_I(R)}{e(k[I_d])} \, .
$$
\end{Proposition}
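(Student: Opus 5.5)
Since $I$ is $\m$-primary, $\sqrt{I}=\m$. Hence $G/\m G = \cF(I)$, and every nilpotent of $G$ lies in $\m G+\sqrt{0}$. More precisely, $\m G$ is nilpotent modulo $I$-degree zero, because $\m^N\subset I$. So $\sqrt{0_G}=\sqrt{\m G}$ and $G/\sqrt{0}\cong \cF(I)/\sqrt{0}$. By Proposition~\ref{PropReducedFiberHomogeneousIdeal} (where $R$ is a domain, hence reduced, and positively graded), $\cF(I)/\sqrt{0}\cong k[I_d]\subset R$. This is a domain, so $G$ has a unique minimal prime $\sqrt{0}$, and $G/\sqrt 0\cong k[I_d]$ with its standard grading.

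For the length formula I will use the associativity formula for multiplicities of the standard graded ring $G$. Up to a field-of-definition convention, the Hilbert function of $G$ is $\lambda(I^j/I^{j+1})$, so its multiplicity is $e(G)=e_I(R)$. Here $G$ is generated in degree one over $G_0=R/I$, an Artinian ring. One can view the multiplicity as the leading coefficient of $\lambda_R(I^j/I^{j+1})$. Associativity gives
\[
e(G)=\sum_{\fP}\lambda(G_\fP)\, e(G/\fP),
\]
the sum taken over minimal primes $\fP$ with $\dim G/\fP=\dim G$. There is only one such prime, $\sqrt0$, and $\dim k[I_d]=\dim R=\dim G$. Hence $e_I(R)=\lambda(G_{\sqrt0})\,e(G/\sqrt0)$.

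It remains to identify $e(G/\sqrt 0)$ with $e(k[I_d])$ computed as a standard graded $k$-algebra. Note that $G/\sqrt0$ is a $k$-algebra with $(G/\sqrt0)_0=k=R/\m$. Lengths over $G_0=R/I$ of modules killed by $\m$ equal $k$-dimensions. So the Hilbert function of $G/\sqrt{0}$ measured by $\lambda_R$ agrees with the $k$-Hilbert function of $k[I_d]$, and $e(G/\sqrt0)=e(k[I_d])$. Dividing gives the formula.

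The main point to check carefully is the associativity formula in this mixed setting, where $G_0$ is Artinian rather than a field. The remedy is to filter the graded module $G$ by graded submodules with quotients $(G/\fP)(-a)$ or modules of lower dimension. Lengths over $R$ are additive along such a filtration, and lengths of $G/\fP$-pieces (killed by $\m$) are $k$-dimensions. Localizing the filtration at $\sqrt0$ counts exactly $\lambda(G_{\sqrt0})$ copies of $G/\sqrt0$, up to shifts that do not affect the leading coefficient. A secondary detail is the correct normalization of the residue field of $R$ as $k$, which holds since $R$ is positively graded over $k$ with $R_0=k$.
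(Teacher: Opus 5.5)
Your proposal is correct and follows the paper's proof: you identify $G/\sqrt{0}\cong\cF(I)/\sqrt{0}\cong k[I_d]$ using that $I$ is $\m$-primary and Proposition~\ref{PropReducedFiberHomogeneousIdeal}, and then apply the associativity formula to $e(G)=e_I(R)$. The points you add — why $\m G$ is nilpotent, the associativity formula when $G_0$ is Artinian, and lengths of modules killed by $\m$ being $k$-dimensions — are details the paper leaves implicit.
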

\begin{proof}
We have $G/\sqrt{0}  \cong \cF(I)/\sqrt{0} \cong k[I_d],$
where the first isomorphism holds because $I$ is zero-dimensional and the second isomorphism 
follows from Proposition \ref{PropReducedFiberHomogeneousIdeal}. We conclude that
$G/\sqrt{0}$ is a domain and, in particular, $\sqrt{0}\subset G$ is 
the only minimal prime.
Now, using the the associativity formula, we obtain 
$$
e_I(R) = e(G)=\lambda\left( G_{\sqrt{0}} \right) \cdot e\left(G/\sqrt{0}\right) = 
\lambda\left( G_{\sqrt{0}} \right) \cdot e\left(k[I_d]\right) .
$$
This implies the desired conclusion.
\end{proof}

\begin{Theorem}\label{ThmFormulaBehrendHomogeneous}
Let $R=\bC[x_1,\ldots,x_n]$ be a standard graded polynomial ring and $\m$ be its maximal homogeneous ideal. 
Let $I\subset R$ be a  zero-dimensional homogeneous ideal with initial degree $d,$ and assume that $I$ is integral over $I_dR$.
Then 
$$
\nu_I = d \cdot \left[\bC[\m_d] : \bC [I_d]\right] =  \frac{d^n}{e(\bC[I_d])}.
$$
\end{Theorem}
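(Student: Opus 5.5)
The approach is to combine Behrend's formula \eqref{EqDefBehrendFunction} with Proposition~\ref{PropLengthHomogeneousDomain}, and then translate the resulting multiplicity into a field degree.

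Since $I$ is zero-dimensional, $\m$-primary and homogeneous, Proposition~\ref{PropLengthHomogeneousDomain} (applied with $k=\bC$, $R$ the standard graded polynomial ring) shows that $G$ has a unique minimal prime $\sqrt{0}$. For a zero-dimensional ideal the Euler obstructions equal one and the sign in \eqref{EqDefBehrendFunction} is $+1$. Hence
\[
\nu_I=\lambda(G_{\sqrt 0})=\frac{e_I(R)}{e(\bC[I_d])}.
\]
For the middle expression of the statement we could instead use Theorem~\ref{TheoremFormulaBehrendWithFieldDegrees}. That route needs the Rees valuations, so I will work with multiplicities and convert to field degrees at the end.

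The first key step is to show that $e_I(R)=d^n$. Integral closure preserves Hilbert--Samuel multiplicity, and $I$ is integral over $I_dR$, so $e_I(R)=e_{I_dR}(R)$. Since $I$ is $\m$-primary, $I_dR$ is $\m$-primary; indeed $\sqrt{I_dR}=\sqrt{I}=\m$. Over the infinite field $\bC$, general linear combinations of a basis of $I_d$ give $n$ forms $f_1,\dots,f_n$ of degree $d$ that generate a minimal reduction of $I_dR$. These forms form a homogeneous system of parameters, hence a regular sequence in $R$, so
\[
e_I(R)=e((f_1,\dots,f_n))=\lambda(R/(f_1,\dots,f_n))=d^n .
\]
This yields $\nu_I=d^n/e(\bC[I_d])$.

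The second key step, which I expect to be the main point needing care, is to identify $d^{n}/e(\bC[I_d])$ with $d\cdot[\bC[\m_d]:\bC[I_d]]$. The plan has three parts.

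1. Regrade $\bC[I_d]$ and $\bC[\m_d]=R^{(d)}$ so that their degree-one components are $I_d$ and $\m_d$. Both are standard graded domains of dimension $n$.
2. The extension $\bC[I_d]\subset\bC[\m_d]$ is finite. In the Veronese grading, $\bC[\m_d]$ is integral over the subalgebra generated by the $f_i$, which is a homogeneous system of parameters of $R^{(d)}$; hence it is integral over $\bC[I_d]$. Because the extension is finite, graded, and generated in the same degree, the standard formula for multiplicities gives
\[
e(\bC[I_d])\cdot[\bC[\m_d]:\bC[I_d]]=e(\bC[\m_d]).
\]
3. Finally, $e(R^{(d)})=d^{n-1}$, since $R^{(d)}$ has Hilbert polynomial $\binom{dk+n-1}{n-1}$ in $k$, whose leading coefficient is $d^{n-1}/(n-1)!$.

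Combining these gives $d^n/e(\bC[I_d])=d\cdot d^{n-1}/e(\bC[I_d])=d\,[\bC[\m_d]:\bC[I_d]]$.

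The subtle point is the projection formula $e(A)\,[B:A]=e(B)$. It holds because $B$ is a finite graded $A$-module of rank $[B:A]$, and $A\subset B$ are standard graded in the same degree normalization, so $e_A(B)=\operatorname{rank}_A(B)\,e(A)$ and $e_A(B)=e(B)$.
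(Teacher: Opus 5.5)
Your proof is correct and follows the paper's argument: combine Behrend's formula with Proposition~\ref{PropLengthHomogeneousDomain}, then get $e_I(R)=d^n$ from a reduction generated by $n$ general forms of degree $d$, which form a regular sequence. The one difference is that the paper cites \cite{KPUfiber}*{Theorem 5.3} for $e(\bC[I_d])=d^{n-1}/[\bC[\m_d]:\bC[I_d]]$, whereas you derive it directly from the finiteness of $\bC[I_d]\subset R^{(d)}$, the projection formula $e(B)=[B:A]\,e(A)$ (the same one the paper uses in Theorem~\ref{Lfielddegree}), and $e(R^{(d)})=d^{n-1}$; this is a valid self-contained replacement.
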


\begin{proof}
Since $I$ integral over $I_dR,$  there is a homogeneous reduction of $I$ generated in degree $d$.
As $\mathbb C$ is infinite, such a reduction $J$ can be chosen to be generated by $n$ homogeneous elements of degree $d,$ and these elements form a regular sequence.
It follows that $e_I(R) = e_J(R) = d^n$.

By \cite{KPUfiber}*{Theorem 5.3} we have 
$$
e\left(\bC[I_d]\right) = \frac{d^{n-1}}{\left[ \bC[\m_d]:\bC[I_d]\right]} ,
$$
where the denominator is the degree of the rational map defined by the linear system $I_d$.
Now the conclusion follows from Proposition \ref{PropLengthHomogeneousDomain} and \eqref{EqDefBehrendFunction}.
\end{proof}

\begin{Remark}\label{RemSqueeze}
{\rm Theorem \ref{ThmFormulaBehrendHomogeneous} implies the following statement:
If $J\subset \mathbb C[x_1, \ldots, x_n]$ is a  zero-dimensional homogeneous ideal generated in a single degree $d,$
and  $I$ is any homogeneous ideal such that $J \subset I \subset \overline{J}$ and $J_d = I_d$, then $\nu_I = \nu_J$.
For example, all homogeneous ideals $I$ such that
$$(x_1^d, x_2^d, \ldots, x_n^d) \subset I \subset (x_1^d, x_2^d, \ldots, x_n^d) +\m^{d+1}$$ 
have the same Behrend function, namely, $\nu_I = d^{n}$.
}
\end{Remark}

\section{The Behrend function of normal ideals}\label{SectionBehrendNormal}

In this section, we illustrate the computation of the Behrend function for classes of normal ideals, 
for which 
formula \eqref{EqTheoremFormulaBehrendWithFieldDegrees}  becomes particularly effective.

\begin{Corollary}\label{CorFormulaBehrendWithFieldDegreesNormalCase}
Let $R=\bC[x_1,\ldots,x_n]$ be a polynomial ring and $I\subset R$ be a normal zero-dimensional ideal.
 Then 
\begin{equation}\label{EqCorFormulaBehrendWithFieldDegreesNormalCase}
\nu_I = \sum_{v}  v(I)
    \end{equation}
where $v$ ranges over the Rees valuations of $I.$
\end{Corollary}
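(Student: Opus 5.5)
The plan is to specialize Theorem~\ref{TheoremFormulaBehrendWithFieldDegrees}. Since $I$ is normal, all powers $I^j$ are integrally closed, so $\overline{\cR}=\bigoplus_{j\ge 0}\overline{I^j}t^j=\bigoplus_{j\ge0} I^jt^j=\cR$. Thus the Rees ring is already normal and the inclusion $\cR\subset\overline{\cR}$ is an equality.

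The consequences for \eqref{EqTheoremFormulaBehrendWithFieldDegrees} are then immediate. Each minimal prime $\fP_i$ of $I\cR$ has exactly one prime of $\overline{\cR}=\cR$ lying over it, namely $\fQ_{i}=\fP_i$ itself. Hence the index $j$ takes a single value. Moreover, the field degree is
\[
\bigl[\overline{\cR}/\fQ_{i}:\cR/\fP_i\bigr]=[\cR/\fP_i:\cR/\fP_i]=1.
\]

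It remains to check that the indexing set of the sum is exactly the set of Rees valuations of $I$. The minimal primes of $I\overline{\cR}$ are in bijection with the Rees valuations; this is the set $\cN$ from the setup preceding Proposition~\ref{PropLength}. Here $I\overline{\cR}=I\cR$, so $\cN=\cM$, and the pairs $(i,j)$ run over $\cN$ bijectively. Therefore \eqref{EqTheoremFormulaBehrendWithFieldDegrees} collapses to $\nu_I=\sum_v v(I)$.

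The only point needing care is that the hypotheses of Proposition~\ref{PropLength} hold. The ring $R=\bC[x_1,\ldots,x_n]$ is normal, universally catenary, and analytically unramified. A zero-dimensional ideal has positive height, with the trivial exception $n=0$, which we exclude. These checks are already implicit in Theorem~\ref{TheoremFormulaBehrendWithFieldDegrees}, so no step is a serious obstacle.
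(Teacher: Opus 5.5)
Your proposal is correct and is the same argument as the paper's: normality of $I$ gives $\cR=\overline{\cR}$. Hence in Theorem~\ref{TheoremFormulaBehrendWithFieldDegrees} each $\fP_i$ has the single prime $\fQ_i=\fP_i$ lying over it, the field degree is $1$, and the sum runs exactly over the Rees valuations. The paper states this in one line; your check of the hypotheses is harmless extra detail that the theorem already covers.
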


\begin{proof}
        It follows  immediately from 
Theorem~\ref{TheoremFormulaBehrendWithFieldDegrees},
since normality of $I$ amounts to $\cR = \overline{\cR}.$
\end{proof}

Several classes of normal ideals are known.

\begin{Example}\label{ExampleNormalIdeals}
{\rm  Let $R=\bC[x_1,\ldots,x_n]$ be a polynomial ring and $I\subset R$ be a zero-dimensional integrally closed ideal. If $n=2,$ then $I$ is normal \cite{Zariski}*{Part II, Section 12}. 
Furthermore, $I$ is normal if the minimal number of generators $\mu(I)$ of $I$ is $\le n+2 \, $ \cite{EGHU}*{Theorem 2.2}, or $I$ is homogeneous with  $\mu(I)\le n+3 \,$ \cite{EGHU}*{Theorem 3.4}, or  $I$ is monomial with $\mu(I)\le n+4 \,$  \cite{AtMa}*{Theorem 4.1}. }
\end{Example}

When $I$ is a monomial ideal, 
the valuations  in 
 \eqref{EqCorFormulaBehrendWithFieldDegreesNormalCase} can be read off the Newton polyhedron ${\rm NP}(I)$, thus, 
convex geometry gives a simple 
 and general procedure to determine the Behrend function.
We illustrate this calculation for $n = 2$ variables, 
when the formula becomes so explicit that it can be expressed in terms of the exponent vectors of the minimal generators of $I$ corresponding to the vertices of ${\rm NP}(I)$.
In the next section, we will give general formulas  for the Behrend function of monomial ideals.

\begin{Example}[Behrend function of integrally closed monomial ideals in two variables, I]\label{ExampleBehrendNormalTwoVariablesI}
{\rm 
Integrally closed ideals of $R = \bC[x,y]$ are normal, cf. Example \ref{ExampleNormalIdeals}.
Thus, a zero-dimensional monomial ideal $I \subseteq R$ is normal if and only if
$$
I = \overline{\left(
x^{a_0}, x^{a_1}y^{b_1}, \dots, x^{a_{r-1}}y^{b_{r-1}}, y^{b_r}\right)
}
$$
for some  integers $a_0 >\cdots> a_{r-1}> 0 =:a_r$ and $b_0 := 0 < b_1 < \cdots < b_r$ such that
the slopes $\frac{b_{i-1}-b_{i}}{a_{i-1}-a_{i}}$ are decreasing in $i$.
Equivalently, $I$ is the monomial ideal whose exponent vectors lie in the Newton polyhedron $${\rm NP}(I) = \mathrm{conv}\big((a_0,0),\ldots,(0,b_r)\big)+\mathbb{R} _{\geq 0}^2\,.$$
The Rees valuations $v_1, \ldots, v_r$ of $I$ correspond to the line segments that are  compact facets of   ${\rm NP}(I)$.
Denoting by $(z_1,z_2)$ the coordinates of the plane containing ${\rm NP}(I)$,
the line through $(a_{i-1},b_{i-1})$ and $(a_i,b_i)$ 
has equation  $ (b_i-b_{i-1})z_1+(a_{i-1}-a_{i})z_2=a_{i-1}b_i-b_{i-1}a_i$,
and the value of the ideal satisfies
$
v_i(I) \gcd(b_i-b_{i-1},a_{i-1}-a_i)= a_{i-1}b_i-b_{i-1}a_i.
$ 
By \eqref{EqCorFormulaBehrendWithFieldDegreesNormalCase}
we obtain
\begin{equation}
\nu_I=  \sum_{i=1}^r \frac{a_{i-1}b_i-b_{i-1}a_i}{\gcd(b_i-b_{i-1},a_{i-1}-a_i)}.
\end{equation}
When $r=1$, this formula recovers \cite{Graffeo}*{Theorem B}.
}
\end{Example}

The argument of Example \ref{ExampleBehrendNormalTwoVariablesI} holds
for normal monomial ideals in any number $n$ of variables.
Observe that, if $n \geq 3$, 
an integrally closed monomial ideal may not be normal,
cf. Examples \ref{ExFails}, \ref{E5.10}.
Nevertheless, we will show 
in Corollary \ref{Corn3}
that,
 somewhat surprisingly,
this  argument still works for (possibly non-normal) integrally closed ideals in $n=3$ variables.

We point out that,
for monomial ideals, 
the calculation of the Newton polyhedron and of the Rees valuations can be performed with convex geometry softwares such as Normaliz
\cite{Normaliz}.

An alternative approach,  available for integrally closed ideals in 2 variables
and avoiding the explicit use of the Newton polyhedron,
is based on  Zariski's factorization theorem.

\begin{Example}[Behrend function of integrally closed monomial ideals in two variables, II]\label{ExampleBehrendNormalTwoVariablesII}
{\rm 
Zariski proved that an integrally closed $\m$-primary ideal $I \subset R=  \mathbb{C}[x,y]$
 factors uniquely (up to the order of the factors) as a product $I = I_1\cdots I_s$ of integrally closed 
 $\m$-primary ideals $I_i \subset \mathbb{C}[x,y]$ that are \emph{simple}, i.e., are
 not the product of two proper ideals. An equivalent characterization of the ideals
 $I_i$ is that each $I_i$ is an $\m$-primary ideal defined by one Rees valuation $v_i,$ 
$$I_i = \big( f \in \mathbb{C}[x,y] \;\big|\; v_i(f) \geq v_i(I_i) \big).$$
The set of Rees valuations of $I$ is precisely $\{v_1, \ldots, v_s\}$
by \cite{LipmanProximity}*{Proposition 4.4}.
Clearly, valuations respect products of ideals, that is,
$v_i(I) = \sum_{j=1}^s v_i(I_j)$.
By Corollary \ref{CorFormulaBehrendWithFieldDegreesNormalCase}, we obtain the formula
\begin{equation}\label{EqBehrendNormalIdealTwoVariables}
\nu_I = \sum_{i,j=1}^s v_i(I_j).
    \end{equation}
    }
\end{Example}
    
This approach is particularly convenient when a factorization of $I$ is known a priori. 
As a test case, we illustrate it  for \emph{towers}, a class of ideals introduced and  extensively  investigated in \cite{Graffeo}*{Sections 3 and 4}.

\begin{Example}[Behrend function of towers]\label{ExBehrendTowers}
{\rm 
An $\m$-primary ideal $I \subset \bC\llbracket x,y\rrbracket$ is called a tower \cite{Graffeo}*{Definition 3.4} if there exist positive integers $\ell_1 < \cdots < \ell_s$ such that, 
up to a $\mathbb{C}$-automorphism of $\bC\llbracket x,y\rrbracket$, we have
$$
I = \prod_{i=1}^s I_i
\quad
\text{where}
\quad
I_i=\big(x, y^{\ell_i}\big).
$$
Each $I_i $ is a simple integrally closed $\m$-primary ideal, defined by the monomial valuation $v_i$ such that $v_i(x) = \ell_i, v_i(y) =1$.
We observe that
$
v_i(I_j) = \min \{ v_i(x), v_i\big(y^{\ell_j}\big)\} = \min \{ \ell_i, \ell_j\}$,
thus,  by  \eqref{EqBehrendNormalIdealTwoVariables}, 
we obtain
$$
\nu_I = \sum_{1\le i,j\le s}  \min \{ \ell_i, \ell_j\} = \sum_{i=1}^s(2s+1-2i)\ell_i.
$$
This recovers  \cite{Graffeo}*{Theorem  3.13}, since the latter sum is equal to
$\sum_{i=1}^s\sum_{j=1}^i \ell_j + \sum_{i=1}^{s-1}\ell_i(s-i).$
A special case is that of {\it complete towers}, when $\ell_i=i$ for all $i$: then the formula simplifies to $\nu_I = \binom{s+1}{3}+\binom{s+2}{3},$
and recovers  \cite{Graffeo}*{Theorems  3.11}.
}
\end{Example}

\begin{Example}[Products of complete towers, I]\label{ExProdTowersI}
{\rm 
Let $K =IJ \subset \bC\llbracket x,y\rrbracket$ be the product of two ideals 
$$
I = \prod_{i=1}^sI_i \quad \text{where}\quad I_i=\big(x,y^i\big) 
\quad \text{and} \quad 
J = \prod_{j=1}^rJ_j \quad \text{where}\quad J_j=\big(x^j,y\big),
$$
for some positive integers $r,s$, possibly after a $\mathbb{C}$-automorphism of $\bC\llbracket x,y\rrbracket$.
As in Example \ref{ExBehrendTowers}, $I_i$ and $J_j$ are simple integrally closed $\m$-primary ideals,
each defined by one monomial Rees valuation, respectively, 
$v_i$ and $w_j$, such that
$$
v_i(x) = i, \, v_i(y) = 1 \quad \text{and} \quad 
w_j(x) = 1,\, w_j(y)=j.
$$
Observe that $v_1=w_1$, so the set of distinct Rees valuations of $K$ is $\{v_1, \ldots, v_s, w_2, \ldots, w_r\}$.
We compute 
$$
v_i(J_j) = \min\{ v_i(x^j), v_i(y)\} = 1,\quad
w_j(I_i) = \min\{ w_j(x), w_j(y)^i\} = 1,
$$
and, applying  \eqref{EqBehrendNormalIdealTwoVariables},
we recover the formula of \cite{Graffeo}*{Theorem 3.17 (1)}:
\begin{align*}
\nu_{K} &=  \sum_{i=1}^s \sum_{h=1}^s  v_i(I_h) 
+
\sum_{i=1}^s \sum_{j=1}^r v_i(J_j) 
+ \sum_{j=2}^r \sum_{i=1}^s w_j(I_i)
+ 
\sum_{j=2}^r  \sum_{k=1}^r w_j(J_k)
\\
&
= \nu_I+ \nu_J - \sum_{k=1}^r w_1(J_k) 
+ 
\sum_{i=1}^s \sum_{j=1}^r (v_i(J_j) +w_j(I_i)) - \sum_{i=1}^s w_1(I_i)
\\
&
= \nu_I + \nu_J+ 2rs  -r-s. 
\end{align*}
}    
\end{Example}

\begin{Example}[Products of complete towers, II]\label{ExProdTowersII}
{\rm 
Let $K =IJ \subset \bC\llbracket x,y\rrbracket$ be the product of two ideals 
$$
I = \prod_{i=1}^sI_i \quad \text{where}\quad I_i=\big(x,y^i\big) 
\quad \text{and} \quad 
J = \prod_{j=1}^rJ_j \quad \text{where}\quad J_j=\big(x+g,y^j\big),
$$
for some positive integers $r,s$, 
and some $g \in y \, \bC\llbracket y\rrbracket$,
possibly after a $\mathbb{C}$-automorphism of $\bC\llbracket x,y\rrbracket;$ in fact we may assume that
$g \in y \, \mathbb C [y]$ with ${\rm{deg}} \, g <r.$
Let $d = o(g)$ be the $(y)$-adic order of $g.$ Clearly $d < r.$
We may also assume
that $d < s; $ for otherwise changing the variable $x$ to $x':=x+g$ we have 
$I_i=(x'-g, y^i)=(x',y^i)$ and so $K$ is monomial in the variables $x',y.$
In either case $\nu_K$ can be computed as in Example \ref{ExBehrendTowers}.

As before, $I_i$ and $J_j$ are simple integrally closed $\m$-primary ideals,
each defined by one Rees valuation, denoted by 
$v_1,\ldots, v_s, w_1,\ldots, w_r.$
Unlike the previous examples, $K$ is not a monomial ideal.
However,
each Rees valuation is monomial in some system of local coordinates:
$v_i$ is monomial  in $x,y,$ determined by 
$v_i(x) = i, \, v_i(y) = 1;$ 
if $j \leq d$, then $J_j = I_j$ and $w_j = v_j;$
if $j \geq d+1$, then 
$w_j$ is monomial  in $x+g, y$, determined by
$
w_j(x+g) = j,\, w_j(y)=1;
$
if $j \leq d$, then $J_j = I_j$ and $w_j = v_j.$ 
Thus,
the set of distinct Rees valuations of $K$ is  $\{v_1, \ldots, v_s, w_{d+1}, \ldots, w_r\}$.
These valuations are distinct for
the following reason: one has $v_i(x+g)={\rm{min}}\{v_i(x),v_i(g)\}$ because $v_i$
is a monomial valuation in $x,y$ and $g$ is a polynomial, hence $v_i(x+g)=
{\rm{min}}\{i,d\},$ whereas $w_j(x+g)=j.$

Now we compute 
$$
v_i(J_j) = \min \{ v_i(x+g),v_i(y^j)\} =\min \{ v_i(x),v_i(g),v_i(y^j)\}
= \min \{ i, d,j\},
$$
$$
w_j(I_i) = \min \{ w_j(x),w_j(y^i)\} =\min \{ w_j(x+g), w_j(g), w_j(y^i)\}
= \min \{ j, d,i\},
$$
where $w_j(x)=w_j(x+g-g)= \min\{w_j(x+g),w_j(g)\}$ since $w_j$ is a monomial valuation in $x+g,y$ and 
$g$ is a polynomial in $y$.

Applying  \eqref{EqBehrendNormalIdealTwoVariables}, we obtain the formula
\begin{align*}
\nu_{K}  &= \sum_{h=1}^s\sum_{i=1}^s v_i(I_h) +\sum_{i=1}^s\sum_{j=1}^r v_i(J_j)+
\sum_{j=d+1}^r\sum_{i=1}^s w_j(I_i) +
\sum_{j=d+1}^r\sum_{k=1}^r w_j(J_k) 
\\
&= \nu_I +\sum_{i=1}^s\sum_{j=1}^r v_i(J_j)+
\sum_{j=d+1}^r\sum_{i=1}^s w_j(I_i) +
\nu_J -
\sum_{j=1}^d\sum_{k=1}^r w_j(J_k) 
\\ 
& =\nu_I+\nu_J + \sum_{i=1}^s\sum_{j=1}^r \min\{i,j,d\} + \sum_{i=1}^s\sum_{j=d+1}^r \min\{i,j,d\} -
\sum_{j=1}^d\sum_{k=1}^r \min\{j,k\}
\\
& =\nu_I+\nu_J
+ \sum_{i=1}^s\sum_{j=d+1}^r \min\{i,j,d\} 
+ \sum_{i=d+1}^s\sum_{j=1}^r \min\{i,j,d\} 
+ 2\sum_{i=d+1}^s\sum_{j=d+1}^r \min\{i,j,d\}
\\
& =\nu_I+\nu_J
+ \sum_{j=d+1}^r\sum_{i=1}^s \min\{i,d\} 
+ \sum_{i=d+1}^s\sum_{j=1}^r  \min\{j,d\} 
+ 2\sum_{i=d+1}^s\sum_{j=d+1}^r d
\\
& =\nu_I+\nu_J + (r-d)\left( ds-\binom{d}{2}\right)+ 
(s-d)\left(dr-\binom{d}{2}\right) + 2 (s-d)(r-d)d.
\end{align*}
 This corrects  an error in the formula of \cite{Graffeo}*{Theorem 3.17(2)}.  
}    
\end{Example}

\vspace{.2cm}

\section{The Behrend function of a monomial ideal}\label{SectionBehrendMonomial}

In this section, we provide several general formulas for the Behrend function  $\nu_I$ for an arbitrary zero-dimensional monomial ideal $I \subset \mathbb C[x_1, \ldots, x_n]$.
In light of Theorem~\ref{TheoremFormulaBehrendWithFieldDegrees}, computing $\nu_I$
amounts to
determining, with notation as in the theorem,
\begin{itemize}
    \item the set $\mathcal{M}=\{\fP_i \}$ of  minimal primes of $I\cR$,
    \item the set $\mathcal{N}=\{\fQ_{i,j} \} $ of  minimal primes of $I\overline{\cR},$ where
    $\fQ_{i,j}$ lies over $\fP_i$, and the values $v_{i,j}(I)$ of $I$ with respect to the corresponding Rees valuations,
    \item the degrees of the residue field extensions $k(\fP_i) \subset k(\fQ_{i,j}).$ 
\end{itemize}
In this section, we solve these three problems for any monomial ideals, not necessarily zero-dimensional ones.

More generally, we work over an arbitrary field $k$, as most results hold in this generality.
Let $R=k[x_1, \ldots, x_n]$ denote a polynomial ring,
$\m$  the maximal homogeneous ideal of $R,$ and $I$  a monomial ideal.
As before, $\cR = R[It] \subset R[t]$ will denote the Rees ring of $I.$

We begin with a general  fact; we are grateful to Mircea Mustață for pointing
 out a simple argument to us.

\begin{Lemma}\label{L1-1} Let $A \subset B$ be an integral extension of monomial subalgebras of
a polynomial ring  over a field. 
The map $\Spec(B) \twoheadrightarrow
\Spec(A)$ induces a one-to-one correspondence between the monomial primes of
$B$ and the monomial primes of $A.$
\end{Lemma}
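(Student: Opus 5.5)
The plan is to show that contraction $Q \mapsto Q\cap A$ sends monomial primes of $B$ to monomial primes of $A$, and then that this map is injective and surjective. Throughout, $A\subset B\subset S=k[x_1,\ldots,x_n]$ are generated by monomials; a monomial prime is a prime generated by monomials of the ring in question. The key structural fact is the following. For a monomial subalgebra $C$ with monomial set $\Lambda_C$ (a submonoid of $\bN^n$), a monomial ideal $P$ of $C$ is prime if and only if the set $F=\Lambda_C\setminus P$ of monomials \emph{not} in $P$ is a \emph{face} of $\Lambda_C$. Here a face means that $u+w\in F$ with $u,w\in\Lambda_C$ forces $u,w\in F$, and $F$ is closed under addition. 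In that case $C/P\cong k[F]$ is a domain.

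First I will check that contraction preserves monomiality. If $Q\subset B$ is a monomial prime, then $Q\cap A$ is spanned by the monomials of $A$ lying in $Q$, since $Q$ is a $\bZ^n$-graded subspace. So $Q\cap A$ is a monomial prime of $A$, with face $F_A=F_B\cap\Lambda_A$.

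For injectivity, suppose two monomial primes $Q,Q'$ of $B$ have the same contraction. Take a monomial $m\in Q\setminus Q'$. Since $B$ is integral over $A$ and the extension is $\bZ^n$-graded, some power $m^r$ lies in $A$. I will justify this by taking an integral equation for $m$ and passing to the homogeneous component of multidegree $r\deg m$, which gives $m^r=\sum a_i m^{r-i}$ with each $a_i$ a scalar multiple of the monomial $m^i$, and at least one $a_i\neq 0$. Then $m^r\in Q\cap A$ while $m^r\notin Q'$ because $Q'$ is prime, a contradiction. The same graded-integrality argument shows that every monomial of $B$ has a power in $A$, i.e.\ $\Lambda_B\subset\sqrt{\Lambda_A}$ in the monoid sense.

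For surjectivity, given a monomial prime $P$ of $A$ with face $F_A$, define $F_B=\{m\in\Lambda_B : m^r\in F_A \text{ for some (equivalently all suitable) } r\}$. I will verify three things. First, $F_B$ is a face of $\Lambda_B$, using the fact that $F_A$ is a face and powers of products are products of powers after taking common exponents. Second, $F_B\cap\Lambda_A=F_A$, because $F_A$ is a face and $m^r\in F_A$ with $m\in\Lambda_A$ forces $m\in F_A$. Third, the ideal $Q$ spanned by $\Lambda_B\setminus F_B$ is prime. Then $Q\cap A=P$. Alternatively, one can use lying over for the integral extension $A\subset B$ to get some prime over $P$ and replace it by its largest monomial subideal, which is prime for a $\bZ^n$-grading (the standard fact that $P^*$ is prime for graded rings). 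That shortcut is probably the cleanest route: $P^*\cap A=(P'\cap A)^*=P^*=P$.

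The main obstacle I expect is the graded-integrality step giving $m^r\in A$. One must ensure that the homogeneous component of the integral equation is nontrivial: some $a_i$ with $i\ge1$ must contribute multidegree $i\deg m$. This holds because otherwise $m^r=0$. Everything else is formal monoid bookkeeping.
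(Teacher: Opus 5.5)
Your proof is correct and follows the paper's approach: the multigrading forces every monomial of $B$ to have a power in $A$, so a monomial prime $\fQ$ of $B$ is recovered from $\fQ\cap A$ as the set of monomials having a power in it. You also spell out two things the paper leaves implicit, namely that contractions of monomial primes are monomial and that every monomial prime of $A$ is hit (via lying over and the largest monomial subideal; there you mean $(P')^*\cap A=(P'\cap A)^*=P$, not $P^*\cap A$).
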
 
\begin{proof} Owing to the multigrading, every monomial $b \in B$ satisfies an equation of integrality of the form $b^n+a=0$ with $a\in A.$ Now, if $\fQ$ is a monomial prime ideal of $B,$ then 
for every monomial $b \in \fQ$ there exists an integer $n >0,$ so that $b^n \in A \cap \fQ= :\fP.$
Thus a monomial of $B$ belongs to $\fQ$ if and only if it has a power that belongs to $\fP.$
In particular, $\fQ$ is uniquely determined by $\fP.$
\end{proof}

\begin{Proposition}\label{1-1}
Let $R=k[x_1,\ldots,x_n]$ be a polynomial ring and $I\subset R$ be a monomial ideal. 
Then 
the map $\Spec(\overline{\cR}) \twoheadrightarrow \Spec(\cR)$ induces a one-to-one 
correspondence between the minimal primes $\fQ_i$ of $I \overline{\cR}$ and the minimal 
primes $\fP_i$ of $I \cR.$ 

In particular, $$\lambda(G_{\fP_i})=v_i(I) \cdot  \left[\overline{\cR}/\fQ_{i}: \cR/\fP_i\right],$$
where $v_i$ is the Rees valuation of $I$ corresponding to $\fQ_i.$
If, in addition, $k=\mathbb C$ and $I$ is zero-dimensional, then 
\[\nu_I = \sum_{i}  v_{i}(I)\cdot \left[\overline{\cR}/\fQ_{i}: \cR/\fP_i\right]. \]

\end{Proposition}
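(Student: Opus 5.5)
The plan is to reduce the one-to-one correspondence to Lemma~\ref{L1-1}, and then read off the two displayed formulas from Proposition~\ref{PropLength} and Theorem~\ref{TheoremFormulaBehrendWithFieldDegrees}.

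\textbf{Setting up the lemma.} Since $I$ is monomial, $\cR=R[It]$ is a monomial subalgebra of the polynomial ring $R[t]=k[x_1,\ldots,x_n,t]$. It is stable under the $\bZ^{n+1}$-grading in which $\deg x_i=e_i$ and $\deg t=e_{n+1}$. The integral closure $\overline{\cR}$ is again a monomial subalgebra of $R[t]$: the normalization of an affine semigroup ring $k[S]$ is $k[\overline{S}]$, where $\overline{S}$ is the saturation of $S$ in the group it generates. Equivalently, $\overline{I^j}$ is the monomial ideal spanned by the lattice points of $j\,{\rm NP}(I)$. Thus $\cR\subset\overline{\cR}$ is an integral extension of monomial subalgebras, exactly as required in Lemma~\ref{L1-1}.

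\textbf{Matching the minimal primes.} The ideals $I\cR$ and $I\overline{\cR}$ are monomial ideals, i.e.\ homogeneous for the $\bZ^{n+1}$-grading. Their minimal primes are therefore multigraded. A multigraded prime of a monomial algebra is generated by the monomials it contains, because each graded component of $\cR$ or $\overline{\cR}$ is one-dimensional. Hence all the $\fP_i$ and $\fQ_i$ are monomial primes.

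By Lemma~\ref{L1-1}, contraction is a bijection from monomial primes of $\overline{\cR}$ to monomial primes of $\cR$. It remains to check that this bijection matches minimal primes of $I\overline{\cR}$ with minimal primes of $I\cR$.

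The extension is integral, so going up, lying over and incomparability hold. In particular, every minimal prime of $I\overline{\cR}$ contracts to a minimal prime of $I\cR$; this uses going down, which is available since $\cR$ is a domain and $\overline{\cR}$ is its normalization. Alternatively one can argue by heights: every minimal prime of either ideal has height one, by the dimension count $\dim G=n$ for an ideal of positive height (as for $G$ above). Conversely, every minimal prime $\fP$ of $I\cR$ has some prime of $\overline{\cR}$ lying over it that is minimal over $I\overline{\cR}$. Any prime lying over a monomial prime $\fP$ and minimal over a monomial ideal can be taken monomial. By Lemma~\ref{L1-1} it is then unique among the monomial primes.

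Finally, any prime $\fQ$ of $\overline{\cR}$ lying over $\fP$ and minimal over $I\overline{\cR}$ is itself a minimal prime of a monomial ideal, hence monomial. So the uniqueness in Lemma~\ref{L1-1} shows that exactly one $\fQ_i$ lies over each $\fP_i$. The subtle point is to phrase this so that no non-monomial prime sneaks in; it is handled by the fact that all minimal primes of multigraded ideals are multigraded.

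\textbf{The formulas.} With this correspondence in hand, Proposition~\ref{PropLength} applies to $\fP_i$. Polynomial rings over a field are normal, universally catenary and analytically unramified, so its hypotheses hold. The sum in Proposition~\ref{PropLength} then has the single term coming from $\fQ_i$, which yields $\lambda(G_{\fP_i})=v_i(I)\cdot[\overline{\cR}/\fQ_i:\cR/\fP_i]$. For $k=\bC$ and $I$ zero-dimensional, Theorem~\ref{TheoremFormulaBehrendWithFieldDegrees} gives $\nu_I$ as a double sum over $i,j$. The inner sum over $j$ collapses to one term, giving the stated formula for $\nu_I$.

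If $I$ has height zero, i.e.\ $I=0$, the statement is vacuous or trivial. Otherwise $I$ has positive height, which is the case assumed above.
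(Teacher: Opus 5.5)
Your proposal is correct and takes the paper's route: $\cR\subset\overline{\cR}$ is an integral extension of monomial subalgebras of $k[x_1,\ldots,x_n,t]$ with $I\cR$ and $I\overline{\cR}$ monomial, so Lemma~\ref{L1-1} gives the correspondence, and the formulas then follow from Proposition~\ref{PropLength} and Theorem~\ref{TheoremFormulaBehrendWithFieldDegrees}; you just supply more detail than the paper's two-line proof. One justification must be dropped, though: going down is not available for $\cR\subset\overline{\cR}$ merely because $\overline{\cR}$ is the normalization of the domain $\cR$, since the going-down theorem needs the base ring $\cR$ itself to be integrally closed and normalization maps can violate it. Use instead your height argument, completed by the dimension formula $\mathrm{ht}(\fQ\cap\cR)=\mathrm{ht}\,\fQ$ for this finite extension of affine domains, or more simply the fact that the bijection of Lemma~\ref{L1-1} preserves inclusions, because a monomial lies in the monomial prime over $\fP$ if and only if some power of it lies in $\fP$.
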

\begin{proof}For the assertion about the minimal primes we use Lemma~\ref{L1-1}; notice that
$\cR \subset \overline{\cR}$ is an integral extension of monomial subalgebras of the
polynomial ring $k[x_1, \ldots, x_n,t]$ and that $I\cR$ and $I \overline \cR$ are monomial ideals. The statement about the length of $G_{\fP_i}$ follows from Proposition~\ref{PropLength}, and the 
formula for the Behrend function is then
a consequence of Theorem~\ref{TheoremFormulaBehrendWithFieldDegrees}.
\end{proof}

\begin{Remark}{\rm 
The one-to-one correspondence established in Proposition~\ref{1-1} generalizes a result of
Graffeo and Ricolfi \cite{Graffeo}*{Theorem C}, who proved 
an equivalent statement 
for any integrally closed monomial ideal $I$ in a polynomial ring in two variables. They
show that there is a one-to-one correspondence between the minimal primes of the associated
graded ring $G$ and the distinct simple integrally closed ideals that appear in a factorization of 
$I$ \cite{Graffeo}*{Proposition 4.4}.
These factors, in turn, correspond to the Rees valuations of $I$ according to \cite{LipmanProximity}*{Proposition 4.4}.
}
\end{Remark}

Proposition~\ref{1-1} accomplishes, in a way, the task of identifying the minimal 
primes of the associated graded ring, equivalently, the minimal primes of $I\cR,$ 
and the values of $I$ with respect to the Rees valuations. Indeed, for a monomial
ideal 
the minimal primes of $I{\cR}$ correspond to
the Rees valuations of $I,$ and 
these are in one-to-one
correspondence with the non-coordinate supporting hyperplanes of the Newton polyhedron ${\rm NP}(I).$ 
Explicitly, if $\ell_1{y_{1}} + \cdots + \ell_n{y_{n}}=M$
is the equation of such a hyperplane, 
where $\ell_1, \ldots, \ell_n, M $ are non-negative integers with  $\gcd(\ell_1, \ldots, \ell_n, M)=1$,
then the corresponding Rees valuation 
is the monomial valuation giving the variables $x_{i}$ value $\ell_{i};$ 
furthermore, the value of $I$ is $M.$

The remainder of this section is devoted to the 
more difficult problem of determining
the field degrees
$\left[\overline{\cR}/\fQ_i: \cR/\fP_i\right]$ in the formula of Proposition~\ref{1-1}.

\begin{Lemma}\label{LemmaVeronese} Let $B$ be a $\mathbb Z$-graded domain with $B_1 \neq 0.$ Then  $\,[B:B^{(d)}]=d \, $
for all  $d\in \mathbb Z_{>0}.$     
\end{Lemma}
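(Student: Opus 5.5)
We want $[\Quot(B):\Quot(B^{(d)})]=d$, where $B^{(d)}=\bigoplus_{j} B_{jd}$. The plan is to pick a nonzero $u\in B_1$ and show that $\Quot(B)$ is generated over $L:=\Quot(B^{(d)})$ by $u$, with $u$ of degree exactly $d$ over $L$.

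First, we describe $\Quot(B)$. Any homogeneous $b\in B_m$ satisfies $b = (b u^{r})/u^{r}$. Choosing $r\ge 0$ with $m+r\equiv 0 \pmod d$, we get $bu^r\in B^{(d)}$. Since $u^r$ is a power of $u$, this shows $b\in L(u)$. Homogeneous elements generate $B$, so $\Quot(B)=L(u)$.

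Next, $u$ is algebraic over $L$: indeed $u^d\in B_d\subset B^{(d)}$, so $u$ is a root of $X^d-u^d\in L[X]$. Hence $[\Quot(B):L]\le d$.

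The main step is the reverse inequality, namely that $1,u,\dots,u^{d-1}$ are linearly independent over $L$. We would use the grading, extended to a grading on the ring of homogeneous fractions, via the following standard device.

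Suppose $\sum_{i=0}^{d-1} c_i u^i=0$ with $c_i\in L$. Clear denominators by a common nonzero denominator $s\in B^{(d)}$. Note that $s$ need not be homogeneous, but we may replace it by the product of all the denominators; each $c_i$ is a quotient of elements of $B^{(d)}$. This gives a relation $\sum a_i u^i=0$ with $a_i\in B^{(d)}$, where $B^{(d)}$ is graded with all degrees in $d\mathbb Z$.

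Now take the component of degree $n$ of this relation, for each $n$. The term $a_iu^i$ contributes to degree $n$ only through $(a_i)_{n-i}$, and this requires $n-i\in d\mathbb Z$. For a fixed $n$, exactly one $i\in\{0,\dots,d-1\}$ satisfies $n\equiv i \pmod d$. So the degree-$n$ component reads $(a_i)_{n-i}u^i=0$. Since $B$ is a domain and $u\ne 0$, we get $(a_i)_{n-i}=0$. Letting $n$ vary, all $a_i=0$, hence all $c_i=0$.

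Therefore $[B:B^{(d)}]=d$. The only delicate point is making sure the denominators can be cleared inside $B^{(d)}$ without homogeneity, and this works because $L=\Quot(B^{(d)})$ by definition.
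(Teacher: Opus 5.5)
Your proof is correct and is essentially the paper's argument: fix a nonzero $x\in B_1$ and show that $1,x,\dots,x^{d-1}$ is a basis over $\Quot(B^{(d)})$, where spanning comes from multiplying a homogeneous element of $B$ by a suitable power of $x$ so that it lands in $B^{(d)}$. Your degree-by-degree comparison proving linear independence makes explicit a step that the paper only records by writing the sum $K\otimes_A B=\bigoplus_{1\le i\le d}Kx^{d-i}$ as direct.
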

\begin{proof}Write $A=B^{(d)}$, let 
$K$ be the quotient field of $A,$ and choose an element $x \neq 0$ in $B_1.$ We claim that 
$\{x^{d-i}|1 \leq i \leq d\}$ is a 
$K$-basis of $K\otimes_AB.$ Indeed, if $j$ is any integer, then $B_j x^i\subset A$ for some
$1 \leq i \leq d,$ so $B_j \subset (Ax^{-d})x^{d-i}
\subset Kx^{d-i}.$ This shows that
 $K\otimes_AB= \oplus_{1\leq i\leq d} Kx^{d-i}$.
\end{proof}

\begin{NotationDiscussion}\label{Dgrading}{\rm
In this section, we use the notation $M_\bullet$ to denote the graded components of 
graded modules.

Let $R=k[x_1,\ldots,x_n]$ be a polynomial ring over a field $k$ and $I\subset R$  a $\cR$ and $\overline{\cR},$ let $\fQ$ be the unique prime of $\overline{\cR}$ lying over $\fP,$ let $v$ be the corresponding Rees valuation of $I,$ and write $d=v(I).$ The valuation $v$ defines
a $\mathbb Z$-grading on the polynomial ring $R[t]=k[x_1, \ldots, x_n,t]$ giving
each variable degree equal to its value.
As explained at the beginning of Section \ref{SectionBehrend}, $v(t)=-v(I),$ 
so ${\rm{deg}}(t)=-d<0.$ Furthermore, since the ideal $I$ is monomial, 
${\rm{indeg}}(I) = d$ and in fact ${\rm{indeg}}(I^j) = {\rm{indeg}}(\overline{I^j}) = j \cdot d.$

The grading on $R[t]$ induces a grading on the monomial subalgebras
$\cR$ and $\overline{\cR},$
which is non-negative because these rings are contained in $\overline{\cR}_{\fQ}.$
From the definition of this grading by means of the monomial valuation $v$ one sees that the centers of $v$ on 
$\cR$ and on $\overline{\cR}$ are 
the irrelevant ideals of $\cR$ and $\overline{\cR},$ respectively. So
$$\fP = \cR_{>0}  \hbox{ \ \ and \ \ }  \fQ = \overline{\cR}_{>0},$$
which gives
$$\cR/\fP=\cR_0  \hbox{ \ \ and \ \ }  \overline{\cR}/\fQ=\overline{\cR}_0.$$

To describe the rings $\cR_0$ and $\overline{\cR}_0$ explicitly, we assume that
the center of $v$ on $R$ is the maximal homogeneous ideal, $\cP \cap R=\m,$ or equivalently that $R_0=k.$ This
assumption is no restriction for our purposes, as we will see in Discussion \ref{Dnozerodim}, and 
it is satisfied for every Rees valuation if (and only if) $I$ is zero dimensional. 
With this assumption on $v$ we consider the monomial $k$-subalgebras of $R[t],$
\[ A':=k[I_dt] \subset B':= k[(\overline I)_dt] \subset C':= \underset{j \ge 0}{\oplus}(\overline{I^j})_{jd}t^j \, . 
\]
Because $R_0=k$ and ${\rm{indeg}}(I^j) = {\rm{indeg}}(\overline{I^j}) = j \cdot d= -{\rm{deg}}(t^j),$ 
it immediately follows that $$\cR_0=A' \hbox{ \ \ and \ \ } {\overline{\cR}_0=C'}.$$
An isomorphic copy of the extensions $A'\subset  B' \subset C'$
is given by the monomial $k$-subalgebras of $R,$
\[ A:=k[I_d] \subset B:= k[(\overline I)_d] \subset C:= \underset{j \ge 0}{\oplus}(\overline{I^j})_{jd} \, , 
\]
which have the advantage that their dimension is maximal in the ambient polynomial ring $R,$ as we will see. 

Setting $\deg(x_i)=0$ and $\deg(t)=1$ instead defines a nonnegative grading on $R[t],$ which
we call {\it filtration grading} as it induces the filtration grading on the Rees algebra and 
its integral closure. This grading induces a positive grading on $A',$ $B',$ $C',$ and
on $A,$ $B,$ $C$ via the above isomorphism. In this grading
$A'$, $B',$ $A,$ $B$ become standard graded $k$-algebras. 

Notice that 
the extension $\cR \subset \overline{\cR}$ is finite and birational. So after passing to degree zero 
components, 
the extension $A' \subset C'$
is still finite, but not necessarily birational, an issue that will occupy us in this and the next section.
Obviously, the same holds for $A \subset C.$

}

\end{NotationDiscussion}

We summarize this discussion in the following result:

\begin{Proposition}\label{LIso} We use the setting of Discussion~$\ref{Dgrading}$
including the assumption $\fP \cap R=\m.$
The $\mathbb Z$-grading defined there induces a non-negative grading
on $\overline{\cR}$ and on $\cR$ so that
\begin{itemize}
\item $\cR_{>0}=\fP \,$ and $\, \cR_0= k[I_dt] =A' \cong A \, ,$ 
    in particular, $\, {\cR}/\fP = A' \cong A$;
    \item $\overline{\cR}_{>0}=\fQ \,$ and $\, \overline{\cR}_0=\underset{j \ge 0}{\oplus}(\overline{I^j})_{jd} \, t^j =C' \cong C \, ,$ 
    in particular, $\, \overline{\cR}/\fQ = C' \cong C$.
\end{itemize}
\end{Proposition}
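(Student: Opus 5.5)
I will verify the claims in order, working entirely with monomials, since $\cR$, $\overline{\cR}$, $\fP$, $\fQ$ are all monomial (by Lemma~\ref{L1-1} and Proposition~\ref{1-1}). First, the grading: $R[t]$ is $\mathbb Z$-graded with $\deg x_i=\ell_i=v(x_i)\ge 0$ and $\deg t=-d$. Each monomial $x^at^j$ of $\overline{\cR}$ has $\deg = v(x^a)-jd$, and $x^a\in\overline{I^j}$ gives $v(x^a)\ge jd$. So the grading is nonnegative on $\overline{\cR}$, hence on $\cR$. Since $\overline{\cR}$ is spanned by monomials, each homogeneous of some degree, both rings are graded subalgebras.

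Second, the identification of the centers. A monomial $u=x^at^j\in\overline{\cR}$ has $v_\fQ(u)=v(x^a)+j\,v(t)=\deg u$, using $v(t)=-v(I)=-d$ from Section~\ref{SectionBehrend}. Thus $u\in\fQ$ (the center of $v_\fQ$ on $\overline{\cR}$, i.e.\ the positive-value elements) if and only if $\deg u>0$. Because $\fQ$ is a monomial prime and $\overline{\cR}_{>0}$ is spanned by monomials, we get $\fQ=\overline{\cR}_{>0}$. Contracting gives $\fP=\fQ\cap\cR=\cR_{>0}$. The quotient statements $\cR/\fP=\cR_0$ and $\overline{\cR}/\fQ=\overline{\cR}_0$ then follow from the decomposition into graded pieces.

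Third, the degree-zero components. A monomial $x^at^j$ in $\overline{\cR}_0$ has $x^a\in\overline{I^j}$ with $v(x^a)=jd$. Here I use that $\fP\cap R=\m$ means $v(x_i)>0$ for all $i$, so $R_0=k$ (for $j=0$ only constants survive). The component $(\overline{I^j})_{jd}$ in the $v$-grading then consists exactly of those monomials, so $\overline{\cR}_0=\bigoplus_j(\overline{I^j})_{jd}t^j=C'$.

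For $\cR_0$, a monomial of $I^jt^j$ is a product $m_1\cdots m_j$ of monomials in $I$, each of $v$-degree at least $d$ (because ${\rm indeg}(I)=d$). Total degree $jd$ forces each $m_i\in I_d$, so $\cR_0=k[I_dt]=A'$. The isomorphisms $A'\cong A$ and $C'\cong C$ come from the map $t\mapsto1$ restricted to these subalgebras. This map is injective because $j$ is recovered from the monomial $x^a$ as $v(x^a)/d$ (here $d>0$, since $I$ has positive height and $\fP\cap R=\m$).

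The only delicate point is the identification $\fQ=\overline{\cR}_{>0}$. It requires knowing that the Rees valuation is exactly the monomial valuation $v$, normalized so that $v(t)=-d$, and that $\fQ$ is monomial. Both are supplied by the discussion following Proposition~\ref{1-1}.
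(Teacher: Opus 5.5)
Your proposal is correct and follows essentially the same route as the paper, whose proof is the argument in Discussion~\ref{Dgrading}: grade $R[t]$ by the monomial valuation with $\deg t=-d$, get nonnegativity from $v_{\fQ}\ge 0$ on $\overline{\cR}$, identify the centers of $v$ with the irrelevant ideals $\cR_{>0}$ and $\overline{\cR}_{>0}$, and read off the degree-zero parts from $R_0=k$ and ${\rm indeg}(I^j)={\rm indeg}(\overline{I^j})=jd$. You also make explicit two points the paper leaves to the reader: that $\fQ$ is a monomial prime, hence determined by which monomials have positive value, and that $t\mapsto 1$ is injective on $A'$ and $C'$ because $d>0$.
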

\vspace{.2cm}

\begin{NotationDiscussion}\label{log2}{\rm 
We keep using the setting of Discussion~$\ref{Dgrading}$
including the assumption $\fP \cap R=\m.$
Proposition ~\ref{LIso} also implies the (known) facts that
the rings $A, A', C, C'$ have dimension $n$, since $\cR/\fP$ and $\overline{\cR}/\fQ$ do,
and that $C, C'$ are normal, since $C'$ is a direct summand of a normal domain. 

The supporting hyperplane $H$ of $N(I)$ corresponding to $v$
has the equation $$v(x_1) \, y_1+ \cdots +v(x_n) \, y_n=d,$$ where all the 
coefficients $v(x_i)$ are positive since $\fP \cap R=\m.$ Therefore
$\mathcal P :=H \cap N(I)$ is bounded, hence a polytope. This polytope is also the convex hull of the exponent vectors of the finitely many
monomials in $I_d.$
Consider the $j$-th dilation $j \mathcal P$,
then $j \mathcal P \cap \mathbb Z^n$
is the set of exponent vectors of the monomials in $(\overline{I^j})_{jd}.$ 
Likewise,
setting
$\mathcal P':= \mathcal P \times \{1\} \subset \mathbb R^n \times \mathbb R=\mathbb R^{n+1},$
then $j \mathcal P' \cap \mathbb Z^{n+1}$ is the set of exponent vectors of the monomials in $\, (\overline{I^j})_{jd}t^j.$ 
Thus, $C'$ is the Ehrhart ring of $\mathcal P$, while $B'$ is
the polytopal ring of $\mathcal P$, sometimes also called simply the toric ring of $\cP$ (see, for instance, \cite{Villa2}*{Chapter 9}). 
By \cite{StanleyEC1}*{Proposition 4.6.30}, the leading coefficient of the Ehrhart polynomial, the Hilbert polynomial of $C',$ is equal to $\rm{vol}(\mathcal P),$ the volume of $\mathcal P$ (see also \cite{Villa2}*{Lemma 9.3.7}).

Now, let $\mathcal A=\{\alpha_1, \ldots, \alpha_q\}$ be a monomial basis of the vector space $A_1,$ which is simply the collection 
of all monomials $\alpha \in I$ with $v(\alpha)=v(I).$  In addition, let $\mathcal A'=\{\alpha_1t, \ldots, \alpha_qt\}$  be the corresponding monomial basis of the vector space $A_1'=A_1t,$ and consider the set $\mathcal A''=\{\frac{\alpha_2}{\alpha_1}, \ldots, \frac{\alpha_q}{\alpha_1}\}.$ 

Given a
finite set $\mathcal U$ of monomials in the Laurent polynomial ring $k[x_1^{\pm1},\dots,x_n^{\pm1}]$,
we denote by ${\log}(\mathcal U)$ the {\it log  matrix} of $\mathcal U,$ the matrix whose columns are the
exponent vectors of the monomials in $\mathcal U$ (in any order). 
For an ideal $\mathfrak a$ of $\mathbb Z$,
we write  $| \mathfrak a |$ for the nonnegative integer generating $\mathfrak a, $
equivalently, the greatest common divisor of any finite generating set of $\mathfrak a.$ 

Notice that $\, {\rank} ({\log} (\mathcal U))={\rm{trdeg}}_k \, k[\mathcal U]={\dim} \, k[\mathcal U]\, $ (see, for instance \cite{Villa2}*{Corollary 8.2.21}). 
Therefore, $\, {\rank} ({\log} (\mathcal A))={\rank} ({\log} (\mathcal A'))=n, $ because $\, \dim \, A'=\dim \, A=n\, $ by Lemma~\ref{LIso}, for instance. Moreover, $\, {\rank} ({\log} (\mathcal A''))={\rank} ({\log} (\mathcal A))-1=n-1, $ because $k[\mathcal A'']$ is a dehomogenization of $A.$ In terms of matrices, if $u_1, \ldots, u_q$ denote the exponent vectors of
$\alpha_1, \ldots, \alpha_q, $ then ${\log} (\mathcal A)$ is the $n \times q$ matrix 
with columns $u_1, \ldots, u_q,$ ${\log} (\mathcal A'')$ is the $n \times (q-1)$ matrix with
columns $u_2-u_1, \ldots, u_q-u_1, $ and ${\log} (\mathcal A')$ is the $(n+1) \times q$ matrix
obtained from ${\log} (\mathcal A)$ by appending a row of ones. Notice that elementary 
column operations over $\mathbb Z$ transform ${\log} (\mathcal A')$ into the matrix
\vspace{-.1cm}
\begin{equation*} 
\begin{pmatrix}
{\log} (\mathcal A'')& u_q  \\
0 & 1 
\end{pmatrix} \ .
\end{equation*}
Thus, once more, we see that ${\rank} ({\log} (\mathcal A''))={\rank} ({\log} (\mathcal A'))-1$,
and that the ideals of maximal minors of the two log matrices are the same, 
$\mathcal{I}_{n-1}({\log} (\mathcal A''))= \mathcal{I}_n({\log} (\mathcal A')).$
}
\end{NotationDiscussion}

We are now ready to state one of our main technical results. The second and third 
equality in part (b)  are an immediate consequence of \cite{Villa2}*{Theorem 9.3.25(a)}
and \cite{EMV}*{Proposition 3.5}.


\begin{Theorem}\label{Lfielddegree} 
We use the setting of Discussion~$\ref{Dgrading}$ and Discussion~$\ref{log2}$
including the assumption $\fP \cap R=\m.$
\begin{enumerate}[{$(a)$}]
\item
$ [(\overline{\cR}/\fQ):(\cR/\fP)]=[C:A] = \frac{(n-1)! \ {\rm{vol}}(\mathcal P)}{e(A)} \, . $
\vspace{.2cm}
\item $ [(\overline{\cR}/\fQ):(\cR/\fP)]=[C:A] =  | \mathcal{I}_{n}({\rm log}(\mathcal A'))|= | \mathcal{I}_{n-1}({\rm log}(\mathcal A'')) | \, . $
\vspace{.2cm}
\item
In particular, 
$$\lambda(G_{\fP})= v(I)  \cdot \frac{(n-1)! \ {\rm{vol}}(\mathcal P)}{e(A)} =v(I)  \cdot | \mathcal{I}_n({\rm log}(\mathcal A'))| = v(I)  \cdot | \mathcal{I}_{n-1}({\rm log}(\mathcal A'')) |   \, .$$
\end{enumerate}
\end{Theorem}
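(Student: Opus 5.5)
The plan is to prove (a) first, deduce the lattice-index formula in (b) from a computation of $[C:A]$ via the groups generated by exponent vectors, and then obtain (c) by substituting into Proposition~\ref{1-1}.

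\emph{Step 1: reduction to $[C:A]$.} By Proposition~\ref{LIso}, $\cR/\fP\cong A$ and $\overline{\cR}/\fQ\cong C$ compatibly with the inclusion. Hence $[(\overline{\cR}/\fQ):(\cR/\fP)]=[C:A]$.

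\emph{Step 2: the volume formula in (a).} Give $A\subset C$ the filtration grading, so that $A$ is standard graded and $C$ is a finite $A$-module. The generic rank of $C$ over $A$ is $[C:A]$. Hence, comparing Hilbert polynomials (equivalently, using the associativity formula for the finite module $C$ over the domain $A$), $e(C)=[C:A]\,e(A)$, where $e(C)$ denotes $(n-1)!$ times the leading coefficient of the Hilbert polynomial of $C$. Since $C$ need not be generated in degree one, I will define $e(C)$ this way and check that the identity still holds: both Hilbert functions have degree $n-1$, and the leading coefficient of the Hilbert polynomial of the finite $A$-module $C$ is the rank times that of $A$. By Discussion~\ref{log2}, $C'\cong C$ is the Ehrhart ring of the $(n-1)$-dimensional lattice polytope $\mathcal P$. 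By \cite{StanleyEC1}*{Proposition 4.6.30} its Hilbert polynomial has leading coefficient ${\rm vol}(\mathcal P)$, measured in the lattice $\mathrm{aff}(\mathcal P)\cap\bZ^n$. This gives $[C:A]=(n-1)!\,{\rm vol}(\mathcal P)/e(A)$.

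\emph{Step 3: the lattice index in (b).} The equality of the last two expressions is the identity $\mathcal I_{n-1}(\log\mathcal A'')=\mathcal I_n(\log \mathcal A')$ from Discussion~\ref{log2}. For the first equality I compute $[C:A]$ directly, using that both rings are monomial subalgebras of $k[x_1,\dots,x_n,t]$, identified via $A'\subset C'$. For an affine semigroup ring $k[S]$, the quotient field is $k(\bZ S)$, the group algebra field of the group generated by $S$. For a finitely generated subgroup $L\subset L'$ of equal rank, the field extension $k(L)\subset k(L')$ has degree $[L':L]$: the Laurent monomials of $L'$ give coset representatives forming a basis. So $[C':A']=[\bZ S_{C'}:\bZ S_{A'}]$. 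Here $\bZ S_{A'}$ is the column lattice of $\log(\mathcal A')$. I then claim $\bZ S_{C'}=\bZ^{n+1}\cap (H\times\bR)$-span, namely the full lattice of integer points in the linear hyperplane $\{(y,s):\sum v(x_i)y_i=ds\}$. This is the step I expect to be the main obstacle. The inclusion $\subseteq$ is clear. For $\supseteq$, every lattice point $(y,s)$ of that hyperplane with $s>0$ large lies in $s\mathcal P'$ after adding a large multiple of an interior lattice point of the cone over $\mathcal P'$, so it is a difference of two elements of $S_{C'}$. This uses only that the cone is full-dimensional in the hyperplane, which holds because $\dim C'=n$.

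\emph{Step 4: conversion to gcd of minors and conclusion.} The hyperplane lattice $\Lambda$ is saturated, since $\gcd(v(x_1),\dots,v(x_n),d)=1$. For a rank-$n$ sublattice $L$ of a saturated rank-$n$ lattice $\Lambda\subset\bZ^{n+1}$, the Smith normal form shows that $[\Lambda:L]$ equals the gcd of the $n\times n$ minors of a generating matrix of $L$, because $\Lambda$ is a direct summand of $\bZ^{n+1}$. Hence $[C:A]=|\mathcal I_n(\log\mathcal A')|$. Part (c) then follows from Proposition~\ref{1-1}, namely $\lambda(G_\fP)=v(I)\,[(\overline{\cR}/\fQ):(\cR/\fP)]$, combined with (a) and (b).
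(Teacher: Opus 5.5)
Your proposal is correct. For (a) and (c) you follow the paper: you reduce to $[C:A]$ via Proposition~\ref{LIso}, use $e(C)=[C:A]\,e(A)$ for the finite homogeneous extension, and identify $e(C')$ with $(n-1)!\,{\rm vol}(\mathcal P)$ by Ehrhart theory. Your remark that this is the relative volume, normalized by the lattice of the affine hull of $\mathcal P$, is a useful precision that the paper leaves implicit.

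For the first equality in (b) you take a genuinely different route. The paper stays with multiplicities: it writes $[C:A]=e(C')/e(A')$ and quotes \cite{Villa2}*{Theorem 9.3.25(a)} for $e(C')/e(A')=|\mathcal{I}_n({\rm log}(\mathcal A'))|$. You instead compute the field degree directly as a lattice index. You first show that the group generated by the Ehrhart monoid $S_{C'}$ is the full lattice $\Lambda$ of integer points of the linear hyperplane $\sum v(x_i)y_i=ds$; your interior-point argument for this is fine. You then convert the index into a gcd of maximal minors by Smith normal form. This is the same mechanism the paper uses in Proposition~\ref{Lfielddegree2} for $[R:A]$, with the ambient lattice $\bZ^n$ replaced by $\Lambda$. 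The one extra ingredient is the identification $\bZ S_{C'}=\Lambda$, which in the paper is hidden inside the citation.

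What each approach buys: your version is self-contained and makes (b) independent of (a), so combining the two yields the identity $(n-1)!\,{\rm vol}(\mathcal P)=e(A)\cdot|\mathcal I_n({\rm log}(\mathcal A'))|$ as a byproduct. The paper's version is shorter but leans on the reference.

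One small correction: $\Lambda$ is saturated simply because it is the intersection of $\bZ^{n+1}$ with a linear subspace. The condition $\gcd(v(x_1),\ldots,v(x_n),d)=1$ plays no role there.
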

\begin{proof}The first equalities in (a) and (b) hold by Proposition~\ref{LIso}. To 
prove the second equalities, recall from Discussion~\ref{Dgrading} and Discussion~\ref{log2}
that $A$ is a standard graded $k$-algebra of dimension $n$ and that the ring extension $A \subset C$ is homogeneous and module finite. Hence, $C$ has a Hilbert polynomial, necessarily of degree $n-1,$ and $e(C)=[C:A] \cdot e(A).$ Thus, $[C:A]=\frac{e(C)}{e(A)}=\frac{e(C')}{e(A')}.$

Now the second equality in (a) follows because $e(C')=(n-1)! \, {\rm vol}(\mathcal P),$
as explained in Discussion~\ref{log2}. As to the 
second equality in (b), recall that 
$\rank(\mathcal A')=n$ by 
Discussion \ref{log2}. Therefore \cite{Villa2}*{Theorem 9.3.25(a)} shows that 
$\frac{e(C')}{e(A')}= | \mathcal{I}_{n}({\rm log}(\mathcal A')|,$ as required. The 
third equality in (b) holds because $\mathcal{I}_n({\log}(\mathcal A'))=\mathcal{I}_{n-1}({\log}(\mathcal A''))$
according to Discussion~\ref{log2}.

Finally, item (c) follows from Proposition~\ref{PropLength} and parts (a) and (b).
\end{proof}

Next, we are going to prove formulas for the ratio $\frac{e(C)}{e(A)}$ that involve directly the
log matrix of $\mathcal{A}$, rather than $\mathcal{A'}$ or $\mathcal{A''}.$ This will
require a better understanding of the Ehrhart ring $C' \cong C.$

\begin{Discussion}\label{DiscussionBiratVerInd}{\rm 
In general,
the algebra $C'$ in Discussion~\ref{Dgrading}
is not standard graded.
However,
it is known   that the $\cR$-module $\overline{\cR}$ is generated in filtration 
degrees $\leq n-1$ (see, for instance, \cite{PUVV}*{Theorem 2.5}, \cite{RRV}*{Proposition 3.1}),
and this implies that the $A'$-module $C'$ is  
generated in degrees $\leq n-1$.
In fact, more is true:
since $C'$ is the Ehrhart ring of $\mathcal P$ and $B'$ is
the polytopal ring of $\mathcal P,$ the $B'$-module $C'$ is
generated in degrees $\, \leq \dim \, \mathcal P -1 =n-2 \,$ (see
\cite{BG}*{Theorem 2.52}), 
and the same is true for the $B$-module $C.$ 
When $\, n \leq 3,$ we obtain $B=C.$

Since 
$C$ is generated in degrees $\leq n-2$ as a module over the standard graded algebra $B,$
its Veronese subrings $C^{(s)}$ satisfy $C^{(s)}=k[C_s]$
whenever $s \geq \max\{n-2, 1\}.$ 
For our purpose, a weaker condition will
suffice, namely,
\begin{equation}\label{Birationalveroneseindex}\Quot(C^{(s)}) = \Quot(k[C_s])  \, .
\end{equation}

We call the least positive integer $s$ for which \eqref{Birationalveroneseindex} holds the \emph{birational Veronese index} of the ideal $I$ relative to the Rees valuation $v$.
In particular, this invariant is $\leq \max\{n-2,1\}$.

Let
$\mathcal C$ be the monomial basis of the vector space $C_s,$ which is  the collection 
of all monomials $\gamma \in \overline{I^s}$ with $v(\gamma)=s \cdot v(I).$ 
Let $\mathcal B$  denote the monomial basis of $B_1, $ which is the set of all monomials $\beta \in \overline{I}$ with $v(\beta)=v(I).$

}
\end{Discussion}

\begin{Proposition}\label{Lfielddegree2} With notation as in Discussions~$\ref{Dgrading}$, ~$\ref{log2}$, ~$\ref{DiscussionBiratVerInd}$
we have
\[ [(\overline{\cR}/\fQ):(\cR/\fP)] = [C:A]= s \cdot \frac{\mid \mathcal{I}_n({\rm log}(\mathcal A)) \mid}{\mid \mathcal{I}_n({\rm log}(\mathcal C)) \mid} \, . \]
In particular, 
$$\lambda(G_{\fP})= v(I)  \cdot s \cdot \frac{\mid \mathcal{I}_n({\rm log}(\mathcal A)) \mid}{\mid \mathcal{I}_n({\rm log}(\mathcal C)) \mid} \, .$$
\end{Proposition}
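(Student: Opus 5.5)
The first equality is Proposition~\ref{LIso}, so it remains to compute $[C:A]$. I will split it along the tower
\[
A \subset k[C_s] \subset C^{(s)} \subset C
\]
as $[C:A] = [C:C^{(s)}]\cdot[C^{(s)}:k[C_s]]\cdot[k[C_s]:A]$. Field degrees are multiplicative here because all four rings are domains inside $R$, and each extension is finite: $C$ is finite over $A$, so every intermediate ring is finite over $A$.

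The first two factors are easy. By the definition of the birational Veronese index, $\Quot(C^{(s)})=\Quot(k[C_s])$, so the middle factor is $1$. For the last step $C^{(s)}\subset C$, I would apply Lemma~\ref{LemmaVeronese} to $C$ with its filtration grading. The lemma needs $C_1\neq 0$, and this holds since $A_1 \subset C_1$ is nonzero. So $[C:C^{(s)}]=s$.

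The remaining factor $[k[C_s]:A]$ is where the log matrices enter, and I expect it to be the main obstacle. The idea is to reduce to an extension of standard graded monomial algebras generated in a single degree, so that the lattice index formula (\cite{Villa2}*{Theorem 9.3.25}) or a direct group-index argument applies.

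1. Replace $A$ by $A^{(s)}=k[A_s]$. Lemma~\ref{LemmaVeronese} applied to $A$ gives $[A:A^{(s)}]=s$, hence
\[
[k[C_s]:A]=\frac{[k[C_s]:A^{(s)}]}{s}.
\]
2. Now $A^{(s)}=k[\mathcal A^s]\subset k[\mathcal C]$, where $\mathcal A^s$ denotes the set of $s$-fold products of elements of $\mathcal A$. Both are monomial domains generated by monomials of the same $v$-degree $sd$, and both have dimension $n$. Their quotient fields are $k(\mathbb Z\mathcal A^s)$ and $k(\mathbb Z\mathcal C)$, where $\mathbb Z\mathcal U$ denotes the subgroup of $\mathbb Z^n$ generated by the exponent vectors of $\mathcal U$. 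For monomial fields, the degree $[k(L'):k(L)]$ for lattices $L\subset L'$ of the same rank $n$ equals the index $[L':L]$. This is the standard fact behind \cite{Villa2}*{Theorem 9.3.25}: the extension is Galois with group dual to $L'/L$, or one can argue via Smith normal form.
3. For a full-rank sublattice $L=\mathbb Z\mathcal U\subset\mathbb Z^n$, the index $[\mathbb Z^n:L]$ equals $|\mathcal I_n(\log\mathcal U)|$. Therefore
\[
[k[C_s]:A^{(s)}]=\frac{|\mathcal I_n(\log \mathcal A^s)|}{|\mathcal I_n(\log\mathcal C)|}.
\]
4. It remains to show $|\mathcal I_n(\log\mathcal A^s)|=s\,|\mathcal I_n(\log\mathcal A)|$. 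The lattice $\mathbb Z\mathcal A^s$ is the kernel of the homomorphism $\mathbb Z\mathcal A\to\mathbb Z/s$ that sends $u\mapsto v(u)/d \bmod s$. This homomorphism is surjective because the elements of $\mathcal A$ have $v$-degree exactly $d$. So the index is exactly $s$.

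Combining everything,
\[
[C:A]= s\cdot 1\cdot \frac{1}{s}\cdot s\,\frac{|\mathcal I_n(\log\mathcal A)|}{|\mathcal I_n(\log\mathcal C)|},
\]
which is the claimed formula. The length statement then follows from Proposition~\ref{1-1} (equivalently Proposition~\ref{PropLength}).

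The delicate points are the kernel identification in step 4 and the full-rank claim for $\mathcal C$. For the latter, $\rank\log\mathcal C=\dim k[C_s]=n$, since $k[C_s]$ is finite over $A^{(s)}$, which has dimension $n$.
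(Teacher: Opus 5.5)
\textbf{The chain you start from does not exist for $s>1$.} Your ingredients are the right ones: Lemma~\ref{LemmaVeronese} for both $A$ and $C$, the definition of $s$, and lattice index $=$ gcd of maximal minors. But the problem is the inclusion $A\subset k[C_s]$. In the filtration grading, $A_1\neq 0$ sits in degree $1$, while $k[C_s]$ and $C^{(s)}$ live only in degrees divisible by $s$. So $A\not\subset k[C_s]$ and $A\not\subset C^{(s)}$.

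Consequently, the factorization $[C:A]=[C:C^{(s)}]\,[C^{(s)}:k[C_s]]\,[k[C_s]:A]$ is not an instance of multiplicativity of field degrees. Your claim that ``every intermediate ring is finite over $A$'' is unfounded, because these rings are not intermediate. The quantity $[k[C_s]:A]$, which in step~1 you set equal to $[k[C_s]:A^{(s)}]/s$, is not a field degree and need not be an integer. For instance, in Example~\ref{ExFails} one computes $[C:A]=2$ and $s=3$, so it would equal $2/3$. Your final formula is nevertheless correct, because your bookkeeping is consistent if each symbol $[X:Y]$ is read as the ratio $[R:Y]/[R:X]$. As written, however, the argument invokes inclusions that do not hold.

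\textbf{The repair.} It uses only what you already have. The genuine towers are $A^{(s)}\subset A\subset C$ and $A^{(s)}\subset k[C_s]\subset C^{(s)}\subset C$. The inclusion $A^{(s)}\subset k[C_s]$ holds because $A_s\subset C_s$, which you yourself use in step~2. Computing $[C:A^{(s)}]$ along both towers gives $s\,[C:A]=s\cdot 1\cdot[k[C_s]:A^{(s)}]$, that is, $[C:A]=[k[C_s]:A^{(s)}]$. Your steps 2--4 then yield $s\,|\mathcal{I}_n(\log\mathcal A)|/|\mathcal{I}_n(\log\mathcal C)|$.

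\textbf{Comparison with the paper.} The paper does essentially this, but uses $R$ as the common overring: $[C:A]=[C^{(s)}:A^{(s)}]=[R:A^{(s)}]/[R:C^{(s)}]=s\,[R:A]/[R:k[C_s]]$. There the factor $s$ comes directly from Lemma~\ref{LemmaVeronese} applied to $A$ inside $R$. This makes your step~4 unnecessary, since it only re-proves $[A:A^{(s)}]=s$ at the level of lattices.

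\textbf{A minor point.} Describing $k(L')/k(L)$ as Galois with group dual to $L'/L$ requires $\operatorname{char}k$ prime to the index and enough roots of unity in $k$. Since $k$ is an arbitrary field in this section, rely on the Smith normal form argument instead.
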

\begin{proof}Again, by Proposition~\ref{1-1} and Lemma~\ref{LIso} it suffices to prove the
second equality in the first formula. Also recall from Discussion~\ref{Dgrading} that
the algebras $A,$ $C,$ and their
Veronese subrings have the same dimension as $R;$ therefore, the 
field degrees in the formulas below will all be finite. Now,
\[
\begin{array}{rcll}
[C:A]
                &=& 
                [C^{(s)}:A^{(s)}] \hspace{2.1cm} \text{by Lemma~\ref{LemmaVeronese}}\\[2mm]
 &=& 
                 \dfrac{[R:A^{(s)}]}{[R:C^{(s)}]} \\[5mm]

                &=& 
                 s \cdot \dfrac{[R:A]}{[R:C^{(s)}]} \hspace{1.95cm} \text{by Lemma~\ref{LemmaVeronese}}\\[5mm]

                 &=& 
                 s \cdot \dfrac{[R:A]}{[R:k[C_s] \, ]}\hspace{1.75cm} \text{by (\ref{Birationalveroneseindex})} \, . \\[5mm]
                   
\end{array}
\]
Finally, as is well known (and can be seen by passing to the Laurent polynomial ring
and using the Smith normal form of the log matrix), we have 
 \begin{equation*}
 [R:A]=[R:k[\mathcal A] \, ]=| \mathcal{I}_n({\rm log}(\mathcal A))| \ \ \ {\text{and}} \ \ \
[R:k[C_s] \, ]=[R:k[\mathcal C] \,]= |\mathcal{I}_n({\rm log}(\mathcal C))|.\qedhere
 \end{equation*}
\end{proof}    

Notice that if the birational Veronese index  in Discussion~\ref{DiscussionBiratVerInd} is equal to 1, 
then $\mathcal C= \mathcal B.$
We finish with a variation of the proposition in this case. 

\begin{Remark} Assume that $\Quot(C) = \Quot(k[C_1])  $, that is,
the birational Veronese index is equal to $1$.
Let $N \subset H$ be the subgroups of $\mathbb Z^n$ generated by the image 
of $\, {\rm log}(\mathcal A)$ and of $\, {\rm log}(\mathcal B),$ respectively. Then  $[C:A]={\rm o}(H/N),$
the order of the group $H/N.$
\end{Remark}
\begin{proof}
It follows by Proposition~\ref{Lfielddegree2}, as  $| \mathcal{I}_n({\rm log}(\mathcal A)) |
={\rm o}(\mathbb Z^n/N) $ and  $| \mathcal{I}_n({\rm log}(\mathcal B)) |
={\rm o}(\mathbb Z^n/H).$ 
\end{proof}

We now return to the computation of Behrend functions. Thus, we will assume that $k=\mathbb C$
and $I$ is zero-dimensional. In this case, each of the minimal primes $\fP_i$ of $I \cR$
contracts to $\m$ and therefore satisfies the assumptions of Discussion~\ref{Dgrading}. Since
we are dealing with several primes $\fP_i,$ we denote the corresponding objects defined in
Discussions~\ref{Dgrading}, ~\ref{log2}, ~\ref{DiscussionBiratVerInd} by $v_i,$ $A_i, B_i, C_i$ $\mathcal P_i,$ $s_i,$ $\mathcal A_i,$ $\mathcal A'_i,$ $\mathcal A''_i,$ $\mathcal B_i,$ 
$\mathcal C_i.$ 

We  state the main result of this section.

\begin{Theorem}\label{TheoremBehrensMonomials}Let $R=\mathbb C[x_1, \ldots, x_n]$ be a polynomial ring
and $I$ be a zero-dimensional monomial ideal. Then  with notation as in Discussions~$\ref{Dgrading}$, $\ref{log2}$, ~$\ref{DiscussionBiratVerInd}$, we have
\begin{align*}
 \nu_I & 
 = \sum_{i} v_i(I) \cdot \frac{(n-1)! \ {\rm{vol}}(\mathcal P_i)}{e(A_i)} 
 \\ 
 &= \sum_{i} v_i(I)  \cdot | \mathcal{I}_n({\rm log}(\mathcal A_i') )| 
 \\
 &= \sum_{i} v_i(I)  \cdot | I_{n-1}({\rm log}(\mathcal A_i'') )|
 \\
 &=\sum_{i}  v_{i}(I)\cdot s_i \cdot \frac{\mid \mathcal{I}_n({\rm log}(\mathcal A_i)) \mid}{\mid \mathcal{I}_n({\rm log}(\mathcal C_i)) \mid} \, ,   
\end{align*}
where the sum is taken over all Rees valuations $v_i$ of $I.$
\end{Theorem}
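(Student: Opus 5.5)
The plan is to assemble the theorem from results already established, treating each minimal prime of $I\cR$ separately and then summing. Since $k=\bC$ and $I$ is zero-dimensional, Proposition~\ref{1-1} gives
\[
\nu_I=\sum_i v_i(I)\cdot\bigl[\overline{\cR}/\fQ_i:\cR/\fP_i\bigr],
\]
where $\fP_i$ runs over the minimal primes of $I\cR$ and $\fQ_i$ is the unique minimal prime of $I\overline{\cR}$ lying over $\fP_i$. The indexing by $i$ is therefore, equivalently, indexing by the Rees valuations $v_i$ of $I$, since Proposition~\ref{1-1} says the correspondence $\fQ_i\leftrightarrow\fP_i$ is one-to-one and the $\fQ_i$ correspond to the Rees valuations. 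So it suffices to identify each field degree $[\overline{\cR}/\fQ_i:\cR/\fP_i]$ with each of the four expressions appearing in the statement.

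Next I would check that every $\fP_i$ satisfies the standing hypothesis $\fP_i\cap R=\m$ of Discussion~\ref{Dgrading}. Because $I$ is $\m$-primary, every Rees valuation $v_i$ has center $\m$ on $R$: its center contains $I$, hence is $\m$. Equivalently, all coefficients $v_i(x_j)$ are positive, which is exactly the condition noted in Discussion~\ref{Dgrading}. Thus the objects $A_i,C_i,\mathcal P_i,\mathcal A_i,\mathcal A_i',\mathcal A_i'',\mathcal C_i,s_i$ are defined for every $i$, and Proposition~\ref{LIso} applies.

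With this in place, Theorem~\ref{Lfielddegree}(a) gives the first expression for $[\overline{\cR}/\fQ_i:\cR/\fP_i]$, and Theorem~\ref{Lfielddegree}(b) gives the second and third. Proposition~\ref{Lfielddegree2} gives the fourth, with $s_i$ the birational Veronese index relative to $v_i$. Substituting each of these into the sum from Proposition~\ref{1-1} yields the four displayed formulas for $\nu_I$.

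There is no real obstacle here; the content was front-loaded into Theorem~\ref{Lfielddegree} and Proposition~\ref{Lfielddegree2}. The only point needing care is the bookkeeping that the sum over minimal primes of $I\cR$ coincides with the sum over all Rees valuations, with no extra factor. This is precisely the injectivity part of Proposition~\ref{1-1} via Lemma~\ref{L1-1}: each $\fP_i$ has exactly one $\fQ_i$ over it, so the inner sum in Theorem~\ref{TheoremFormulaBehrendWithFieldDegrees} collapses to a single term.
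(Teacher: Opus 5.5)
Your proposal is correct and matches the paper's proof, which simply combines Proposition~\ref{1-1} with Theorem~\ref{Lfielddegree} and Proposition~\ref{Lfielddegree2}. As the paper notes just before the theorem, zero-dimensionality makes every minimal prime $\fP_i$ of $I\cR$ contract to $\m$, so the setting of Discussion~\ref{Dgrading} applies to each Rees valuation; your extra bookkeeping on the centers and on the correspondence $\fP_i \leftrightarrow \fQ_i$ is accurate.
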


\begin{proof}
It follows 
by combining Proposition~\ref{1-1}, Theorem~\ref{Lfielddegree}, and Proposition~\ref{Lfielddegree2}.
\end{proof}

\vspace{.07cm}

\begin{Corollary}\label{nice} 
If the 
birational Veronese indices of $I$ are equal to 1 for all Rees valuations,
then 
\[ \nu_I= \sum_{i}  v_{i}(I) \cdot \frac{\mid \mathcal{I}_n({\rm log}(\mathcal A_i) )\mid}{\mid \mathcal{I}_n({\rm log}(\mathcal B_i)) \mid} \, . \]
If, in addition, $I$ is integrally closed,  then
\vspace{-.1cm}
\[ \nu_I= \sum_i v_i(I) \, . \]
\end{Corollary}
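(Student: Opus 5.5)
The first formula follows by specializing the last expression in Theorem~\ref{TheoremBehrensMonomials}. If $s_i=1$ for every Rees valuation $v_i$, then Discussion~\ref{DiscussionBiratVerInd} defines $\mathcal C_i$ as the monomial basis of $(C_i)_{s_i}=(C_i)_1$. This is the set of monomials $\gamma\in\overline{I}$ with $v_i(\gamma)=v_i(I)$, which is exactly $\mathcal B_i$. The factor $s_i$ then disappears, and $\mathcal I_n(\log(\mathcal C_i))=\mathcal I_n(\log(\mathcal B_i))$. Substituting these into the fourth formula of Theorem~\ref{TheoremBehrensMonomials} gives the first displayed identity.

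For the second formula, assume in addition that $I=\overline I$. Then $(\overline I)_{d_i}=I_{d_i}$ with $d_i=v_i(I)$, so $B_i=k[(\overline I)_{d_i}]=k[I_{d_i}]=A_i$ and $\mathcal B_i=\mathcal A_i$. Each ratio $|\mathcal I_n(\log\mathcal A_i)|/|\mathcal I_n(\log\mathcal B_i)|$ therefore equals $1$. To see that this is a legitimate cancellation, recall from Discussion~\ref{log2} that $\rank\log(\mathcal A_i)=n$, so the gcd of the maximal minors is a positive integer and not $0$. The formula reduces to $\nu_I=\sum_i v_i(I)$.

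An alternative route to the same conclusion: the proof of Proposition~\ref{Lfielddegree2} shows that $[C_i:A_i]=s_i\,[R:A_i]/[R:k[C_{i,s_i}]]$. With $s_i=1$ this is $[R:A_i]/[R:B_i]=[B_i:A_i]$, which equals $1$ when $A_i=B_i$. Proposition~\ref{1-1} then gives $\nu_I=\sum_i v_i(I)$ directly.

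There is no real obstacle here. The only points that need checking are the identification $\mathcal C_i=\mathcal B_i$ when $s_i=1$, which is immediate from the definitions, and the nonvanishing of the minors, which follows from the rank computation in Discussion~\ref{log2}.
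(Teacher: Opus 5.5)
Your proposal is correct and follows the paper's implicit argument: specialize the fourth formula of Theorem~\ref{TheoremBehrensMonomials} using $s_i=1$ and $\mathcal C_i=\mathcal B_i$ (the observation the paper records just before the corollary), and then use $\mathcal A_i=\mathcal B_i$ when $I=\overline I$. Your check that the gcds of maximal minors are nonzero, via $\rank \log(\mathcal A_i)=n$ and $\mathcal A_i\subseteq\mathcal B_i$, is a detail the paper leaves unstated, and your alternative route through $[C_i:A_i]=[B_i:A_i]$ is equally valid.
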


\vspace{.13cm}

By Discussion~\ref{DiscussionBiratVerInd}, we obtain the following special case.

\begin{Corollary}\label{Corn3} Let ${\mathbb C}[x_1,\ldots,x_n]$ be a polynomial ring with $n\le 3$ and $I$  an integrally closed zero-dimensional monomial ideal. Then 
\vspace{-.2cm}
$$\nu_I=\sum v(I)\, ,$$
where the sum is taken over all Rees valuations of $\, I.$
\end{Corollary}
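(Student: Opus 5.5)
The plan is to reduce Corollary~\ref{Corn3} to Corollary~\ref{nice}. That requires two facts for every Rees valuation $v_i$ of $I$: the birational Veronese index $s_i$ equals $1$, and $|\mathcal{I}_n(\log\mathcal A_i)| = |\mathcal{I}_n(\log\mathcal B_i)|$. Once both hold, each summand of Theorem~\ref{TheoremBehrensMonomials} collapses to $v_i(I)$.

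First, the Veronese index. Since $I$ is zero-dimensional, every minimal prime $\fP_i$ of $I\cR$ contracts to $\m$, so the setting of Discussion~\ref{Dgrading} applies to each $v_i$. By Discussion~\ref{DiscussionBiratVerInd}, the $B_i$-module $C_i$ is generated in degrees $\le \dim\mathcal P_i - 1 = n-2$, using \cite{BG}*{Theorem 2.52}. For $n\le 3$ this bound is $\le 1$. Since $B_i$ is standard graded and $(C_i)_0 = k$, generation in degrees $\le 1$ gives $C_i = B_i + B_i (C_i)_1$. Moreover $(C_i)_1 = (\overline I)_d = (B_i)_1$. Hence $C_i = B_i = k[(C_i)_1]$, so $s_i = 1$ and $\mathcal C_i = \mathcal B_i$.

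Second, the ratio of minors. Because $I$ is integrally closed, $(\overline I)_d = I_d$, so $A_i = B_i$ and $\mathcal A_i = \mathcal B_i$. The ratio is therefore $1$ and the formula of Corollary~\ref{nice} becomes $\nu_I = \sum_i v_i(I)$.

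For $n \le 2$ the bound $n-2 \le 0$ makes the generation statement even stronger, so the same argument applies. In fact, for $n=2$ the result already follows from normality (Example~\ref{ExampleNormalIdeals}) together with Corollary~\ref{CorFormulaBehrendWithFieldDegreesNormalCase}. The small cases $n=1$ (where $\mathcal P_i$ is a point and $C_i = k[x^d] = A_i$) and $n=2$ should be recorded explicitly to avoid relying on the bound $\dim\mathcal P - 1$ when it is $\le 0$.

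The only real point to check is the identification $B_i = C_i$ for $n = 3$. This rests on the cited Bruns--Gubeladze degree bound for the Ehrhart ring over the polytopal ring of a lattice polygon; I take that bound as given from Discussion~\ref{DiscussionBiratVerInd}. With it in hand, the conclusion is immediate from Theorem~\ref{TheoremBehrensMonomials}, or directly from Proposition~\ref{1-1}, since $[C_i : A_i] = 1$.
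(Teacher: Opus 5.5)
Your proposal is correct and is essentially the paper's argument. The paper obtains Corollary~\ref{Corn3} from Corollary~\ref{nice} via Discussion~\ref{DiscussionBiratVerInd}: the Bruns--Gubeladze bound gives $B_i = C_i$ for $n \le 3$, so every $s_i = 1$, and integral closedness gives $\mathcal A_i = \mathcal B_i$; your remark that this already forces $A_i = C_i$, hence field degree $1$ in Proposition~\ref{1-1}, is the same observation stated directly.
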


\begin{NotationDiscussion}\label{Dnozerodim}{\rm Although this has no bearing on the computation of Behrend functions yet, for future reference we finish this section with an algorithm for determining the field degrees 
$[\overline{\cR}/\fQ : \cR/\fP]$ when $I$ is a monomial ideal of arbitrary dimension. 
Let $R=k[x_1, \ldots, x_n]$ be a polynomial ring and  $I$  any monomial ideal.
Again, let $\fP$ be a minimal prime of $I\cR,$  $\fQ$  the unique prime of $\overline{\cR}$ lying over it, and $v$  the corresponding Rees valuation of $I.$
The prime ideals $\fP$ cannot all contract to $\m$ unless $I$ is zero-dimensional. 
In general, 
however, $\fP\cap R$ is a prime monomial ideal, hence, it is generated by a subset of  variables $\{x_{i_1}, \ldots, x_{i_r}\}$. Specializing the remaining variables to $1,$ we obtain a map $R\lto R^{\sharp}=k[x_{i_1}, \ldots, x_{i_r}]$ which, up to renaming the variables and adjoining 
Laurent polynomial variables, is a localization map.
Hence, on the one hand the ideal $I^{\sharp}:=IR^{\sharp}$ is a monomial ideal,  on the other hand
$\cR^{\sharp}:=\cR(I^{\sharp})$ and $\overline{\cR^{\sharp}}$ are obtained from $\cR $ and $\overline{\cR}$
by tensoring with $\otimes_R R^{\sharp}.$ Moreover, $\fP^{\sharp}:=\fP \cR^{\sharp}$ and $\fQ^{\sharp}:=\fQ \overline{\cR^{\sharp}}$ are minimal primes of $I^{\sharp} \cR^{\sharp}$ and of $I^{\sharp}\overline{\cR^{\sharp}},$ respectively. The field degree remains the same,  $[(\overline{\cR}/\fQ):(\cR/\fP)] = [(\overline{\cR^{\sharp}}/\fQ^{\sharp}):(\cR^{\sharp}/\fP^{\sharp})], $ but the ideal $\fP^{\sharp}$ contracts to the maximal homogeneous ideal of the ambient ring $R^{\sharp}.$ So we may apply Theorem~\ref{Lfielddegree}, and Proposition~\ref{Lfielddegree2}, with $s^{\sharp},$ $\mathcal A^{\sharp},$ $(\mathcal A^{\sharp})',$ $(\mathcal A^{\sharp})''$ and $\mathcal C^{\sharp}$ defined for $I^{\sharp}\subset R^{\sharp}$ as in Discussions ~\ref{log2} and ~\ref{DiscussionBiratVerInd}. When combined with Proposition~\ref{1-1}, this gives the following
formula.}
\end{NotationDiscussion}

\begin{Corollary} Let $R=k[x_1, \ldots, x_n]$ be a polynomial ring, $I$ be any monomial ideal, 
and $\fP$ be minimal prime of $I\cR.$ With notation as in Discussion~$\ref{Dnozerodim}$, one has
$$\lambda(G_{\fP})= v(I)  \cdot | I_r({\rm log}((\mathcal A^{\sharp})')) | = v(I)  \cdot | I_{r-1}({\rm log}((\mathcal A^{\sharp})'') )| =v(I)  \cdot s^{\sharp} \cdot \frac{\mid I_r({\rm log}(\mathcal A^{\sharp})) \mid}{\mid I_r({\rm log}(\mathcal C^{\sharp})) \mid} \, .$$
    
\end{Corollary}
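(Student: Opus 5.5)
The plan is to reduce to the zero-dimensional situation already handled, following Discussion~\ref{Dnozerodim}. By Proposition~\ref{1-1}, $\fQ$ is the unique prime of $\overline{\cR}$ over $\fP$, and
\[\lambda(G_{\fP})=v(I)\cdot[(\overline{\cR}/\fQ):(\cR/\fP)].\]
So it suffices to show that the field degree equals each of $|I_r(\log((\mathcal A^{\sharp})'))|$, $|I_{r-1}(\log((\mathcal A^{\sharp})''))|$ and $s^{\sharp}\,|I_r(\log\mathcal A^\sharp)|/|I_r(\log\mathcal C^\sharp)|$. I also need to check that $v(I)=v^\sharp(I^\sharp)$, where $v^\sharp$ is the Rees valuation of $I^\sharp$ attached to $\fQ^\sharp$.

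First I would make the localization precise. Write $\fP\cap R=(x_{i_1},\dots,x_{i_r})$, which is a monomial prime because $\fP$ is a monomial prime of the multigraded ring $\cR$. Let $W$ be the multiplicative set generated by the remaining variables $x_j$, $j\notin\{i_1,\dots,i_r\}$. Then $W^{-1}R=R^\sharp[x_j^{\pm1}]$, which is a Laurent extension of $R^\sharp$.

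Rees algebras and integral closures commute with localization. Hence $W^{-1}\cR=\cR(I^\sharp)\otimes_{R^\sharp}W^{-1}R$, and likewise for $\overline{\cR}$. Moreover $W\cap\fP=\emptyset$, since these variables lie outside $\fP\cap R$. Therefore localization preserves the minimality of $\fP$ over $I\cR$, the unique prime $\fQ$ over it, the DVR $\overline{\cR}_{\fQ}$, and the residue fields $k(\fP)$ and $k(\fQ)$.

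Adjoining Laurent variables $x_j^{\pm1}$ amounts to tensoring with a Laurent polynomial ring. This is a flat extension whose fibers are geometrically integral, and it multiplies both residue fields by the same purely transcendental extension. So the degree $[k(\fQ):k(\fP)]$ equals $[k(\fQ^\sharp):k(\fP^\sharp)]$. The same argument preserves the valuation, giving $v(I)=v^\sharp(I^\sharp)$. Concretely, $v$ is the monomial valuation from the supporting hyperplane, and it is zero on the inverted variables, since $\fQ\cap R=\fP\cap R$ does not contain them.

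Next I apply the zero-dimensional-type results in $R^\sharp$. The prime $\fP^\sharp$ contracts to the maximal homogeneous ideal of $R^\sharp$, which is exactly the hypothesis of Discussion~\ref{Dgrading}. Note that $I^\sharp$ itself need not be zero-dimensional, but Theorem~\ref{Lfielddegree} and Proposition~\ref{Lfielddegree2} only use the assumption on the contraction of the prime.

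With this, Theorem~\ref{Lfielddegree}(b) gives the first two equalities with $n$ replaced by $r$. Proposition~\ref{Lfielddegree2} gives the third, using the birational Veronese index $s^\sharp$ and the set $\mathcal C^\sharp$ for $I^\sharp$. Multiplying by $v(I)$ then yields the corollary.

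The main obstacle is the invariance step: I must verify that the field degree and the valuation value are unchanged under specializing the variables $x_j\mapsto 1$, or equivalently under localizing at them. I would argue this via the Laurent-tensor description, where $\cR/\fP\otimes k[x_j^{\pm1}]$ is a domain whose fraction field is a purely transcendental extension. A purely transcendental extension is linearly disjoint from any algebraic extension, so it preserves degrees of finite field extensions.
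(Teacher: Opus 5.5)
Your proposal is correct and is essentially the paper's own argument: Discussion~\ref{Dnozerodim} passes to $R^{\sharp}$ by inverting (equivalently, specializing to $1$) the variables outside $\fP\cap R$ and notes that the field degree is unchanged. It then applies Theorem~\ref{Lfielddegree} and Proposition~\ref{Lfielddegree2} to $I^{\sharp}$ and combines them with Proposition~\ref{1-1}. Your Laurent-extension justification of why the field degree is unchanged adds detail the paper leaves implicit. That justification relies on $\fP$ and $\fQ$ being monomial, so that their localizations are extended from primes of $\cR(I^{\sharp})$ and $\overline{\cR(I^{\sharp})}$.
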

\vspace{.2cm}

 From the point of view of valuation theory, the ideal $(x_{i_1}, \ldots, x_{i_r})R$ is the center of the Rees valuation $v$ of $I.$ In terms of the Newton Polyhedron,  specializing from $I$ to $I^{\sharp}$ has the effect of projecting  
the Newton Polyhedron of $I$  onto the subspace of $\mathbb R^n$ spanned by  $\{{\bf e}_{i_1}, \ldots, {\bf e}_{i_r}\}.$

\vspace{.2cm}

\section{Ideals with  one Rees valuation and weighted Veronese subrings}\label{SectionOneVal}

This section centers on a phenomenon we find quite striking: zero-dimensional monomial 
ideals possessing a single Rees valuation behave, for the purpose of 
computing the Behrend function, \emph{as if they were normal}---even 
though they seldom are. 
Recall from Section~\ref{SectionBehrendMonomial} 
that our  formula for $\nu_I$ involves 
the birational Veronese indices of $I$, 
some auxiliary positive 
integers  which in general can be 
greater than~$1$ and measure the failure of certain 
extensions $B \subset C$ to be birational. 
We prove that if a zero-dimensional monomial ideal $I$ has a 
single Rees valuation, equivalently, if $(x_1^{a_1}, \ldots, x_n^{a_n}) 
\subset I \subset \overline{(x_1^{a_1}, \ldots, x_n^{a_n})}$ 
for some positive integers $a_1, \ldots, a_n$,
the birational Veronese index of $I$ is equal to $1$ (Corollary~\ref{CorBiratVerIndexOneVal}). 
Then  the formula for the Behrend 
function of such an ideal collapses to the elegant closed formula of 
Theorem~\ref{Theoremonlyone}
$$
\nu_I = |\mathcal{I}_n(\log \mathcal{A})|,
$$  
expressed in terms of a single log 
matrix, whose columns correspond simply to the generators of $I$ of minimal value. 
In the case $I = \overline{(x_1^{a_1}, \ldots, 
x_n^{a_n})},$ this further simplifies to the formula of Corollary~\ref{Onlyone}
$$
\nu_I = 
\mathrm{lcm}(a_1, \ldots, a_n),
$$  generalizing 
\cite{Graffeo}*{Theorem~B}  to an arbitrary number of variables.

The proof of Corollary~\ref{CorBiratVerIndexOneVal} rests on
a result in polyhedral combinatorics of independent interest,  which we now discuss.
A lattice polytope $\mathcal{P}\subset \bR^n$ satisfies the \emph{integer decomposition property} (IDP) if every lattice point of every dilation $d\cP$ is a sum of $d$ lattice points of $\cP$,
equivalently, if the polytopal ring and the Ehrhart ring of $\cP$ coincide.
This property is well studied due to its several notable consequences.
In algebraic terms, it implies the normality of the polytopal ring and the generation of the Ehrhart ring  in degree 1.
In terms of the associated toric variety, it implies that $\cP$ defines a very ample line bundle and the corresponding embedding is projectively normal.
The IDP also has important implications  for the structure of $h^*$-vectors
(see, for instance, \cite{APPS})
and in
integer programming (see, for instance, \cite{BaumTrotter,Schrijver}).
For a collection of results on this problem, we refer the reader to 
 \cite{projectivenormality}.

In this section, we are interested in the following  class of polytopes.

\begin{Notation}\label{NotationWeightedStandard}{\rm 
Let $\ell_1, \ldots, \ell_n \in \bZ_{>0}$ be positive integers,
$M$  a positive common multiple of these integers, and let
\begin{align*}
\Delta &=\Delta(\ell_1, \ldots, \ell_n;M) = \left\{ \mathbf{u} \in {\bR}_{\geq 0}^n \,:\, \sum_{j=1}^n \ell_j u_j = M\right\},\\
\mathcal{F} &=\mathcal{F}(\ell_1, \ldots, \ell_n;M) = \left\{ \mathbf{u} \in {\bZ}_{\geq 0}^n \,:\, \sum_{j=1}^n \ell_j u_j = M\right\} = \Delta\cap \bZ^n,\\
\mathcal{S} &=\mathcal{S}(\ell_1, \ldots, \ell_n;M) = \left\{ \mathbf{u} \in {\bZ}_{\geq 0}^n \,:\, \sum_{j=1}^n \ell_j u_j \equiv 0 \ \ {\rm mod} \, M\right\}
= \bigcup_{d \in \bZ_{\geq 0}}d\Delta\cap \bZ^n.
\end{align*}
The lattice polytope $\Delta$ is  called a \emph{weighted standard simplex}.
The set $\cF$ consists of the lattice points of $\Delta$,
while $\cS$ is the monoid consisting of all the lattice points in the conical hull of $\Delta$.

Let $R = k[x_1, \ldots, x_n]$ be a polynomial ring with positive grading $\deg(x_i) = \ell_i$ for $i = 1, \ldots, n$.
Following Notation and Discussion \ref{log2}, 
let $C \subset R$ denote the Ehrhart ring of $\Delta$, 
that is, the monomial subalgebra of $R$ whose monomials corresponds to the lattice points of $\cS$,
and let $B \subset C$ denote the polytopal ring  of $\Delta$,
that is, the subalgebra generated by monomials corresponding to the lattice points of $\cF$.
Observe that $C= R^{(M)}$ is the $M$-th Veronese subring of $R$,
while $B = k[R_M]$ is the subalgebra generated in degree $M$.
In particular, $R^{(M)}$ is a finite module over $k[R_M]$.
By scaling the degree, the latter can be considered as a standard graded $k$-algebra, however, 
in this section it is convenient to use the positive grading inherited by $R$.
}
\end{Notation}

We point out that the projective variety $ \bP(\ell_1, \ldots, \ell_n):= \Proj R $ is known as a \emph{weighted projective space}, a  class of toric varieties that has received substantial interest recently, 
and the polytope $\Delta(\ell_1, \ldots, \ell_n;M)$ defines the line bundle $\cO(M)$ on 
$\bP(\ell_1, \ldots, \ell_n)$
(see, for instance,  \cite{ABL,BanksRamkumar,MullerPaemurru}).

By Notation \ref{NotationWeightedStandard},
it follows that a weighted standard simplex $\Delta$ satisfies IDP if and only if $B=C$, equivalently, if and only if  the monoid $\cS$ is generated by $\cF$.
Even for weighted standard simplices, 
the IDP fails in general;
a counterexample is 
$(\ell_1, \ldots, \ell_4) = (1, 6, 10, 15)$
\cite{ogata2005k};
see \cite{BanksRamkumar,BDHLS,SHZ}
for some very recent work on the IDP for simplices.

In order to better understand this failure, 
consider the diagram 
\[
\begin{tikzcd}
B = k[R_M] \arrow[r, hook] \arrow[d, hook] & C = R^{(M)} \arrow[d, hook] \\
\operatorname{Quot}(B) \arrow[r, hook]    & \operatorname{Quot}(C)
\end{tikzcd}
\]
and recall that
$C$ is a normal domain and $B \subset C $ is an  integral extension. 
It follows that
\vspace{1mm}
\begin{center}
IDP holds $\Leftrightarrow B$ is normal and     $B \subset C$ is birational.\footnote{
Note that several literature sources erroneously state that IDP is equivalent to normality.
}
\end{center}
\vspace{1mm}
While normality of $B$ fails in general, the main result of this section is that 
the extension $B\subset C$ is always birational for 
weighted standard simplices (Corollary \ref{CorBirationalVeroneseSubring}).
What makes this result even more remarkable is the fact that, 
in general,
the IDP may fail for the opposite reason:
 Example~\ref{ExFails} exhibits a polytope -- in fact, a simplex -- 
 for which $B$ is normal but $B \subset C$ is not birational.
Nevertheless, 
in Example~\ref{E5.10} we compute the Behrend function for a class of
zero-dimensional ideals arising from this example, confirming that the 
machinery of Theorem~\ref{TheoremBehrensMonomials} handles cases beyond 
the reach of Theorem~\ref{Theoremonlyone}.

\begin{Lemma}\label{LemmaGCD}
We use the setting of $\ref{NotationWeightedStandard}$.
Assume that $n \geq 2$ and $\gcd(\ell_1, \ldots, \ell_n)= 1$.
For each $i = 1, \ldots, n$, define
\begin{align*}
\chi_i &= \gcd \left\{ u_i \,:\, \mathbf{u} = (u_1, \ldots, u_i, \ldots, u_n) \in \mathcal{F} \right\},\\
\gamma_i &=  \gcd(\ell_1, \ldots, \widehat{\ell_i}, \ldots, \ell_n).
\end{align*}
Then  we have $\chi_i = \gamma_i$ for each  $i$.
\end{Lemma}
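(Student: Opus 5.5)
We prove the two divisibilities $\gamma_i \mid \chi_i$ and $\chi_i \mid \gamma_i$ separately, fixing $i$ throughout.

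\emph{Step 1: $\gamma_i \mid \chi_i$.} Let $\mathbf u\in\mathcal F$. Then $\ell_i u_i = M-\sum_{j\ne i}\ell_j u_j$. Since $\gamma_i$ divides every $\ell_j$ with $j\neq i$, and also divides $M$ (because $M$ is a common multiple of the $\ell_j$), it divides $\ell_i u_i$. As $\gcd(\gamma_i,\ell_i)=\gcd(\ell_1,\ldots,\ell_n)=1$, we get $\gamma_i\mid u_i$. Hence $\gamma_i$ divides $\chi_i$.

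\emph{Step 2: $\chi_i\mid\gamma_i$.} It suffices to show that for every prime power $p^e$ with $p^e\mid\chi_i$ we have $p^e\mid\gamma_i$. Equivalently, for each prime $p$ with $p^{e+1}\nmid\gamma_i$, we must exhibit some $\mathbf u\in\mathcal F$ with $p^{e+1}\nmid u_i$.

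The natural candidates are the vertices and near-vertices of $\Delta$. The vertex $(M/\ell_i)\mathbf e_i$ gives $u_i=M/\ell_i$, and the other vertices give $u_i=0$. More generally, consider points of the form $u_i=c$, $u_j=(M-c\ell_i)/\ell_j$ with all other coordinates zero. Such a point lies in $\mathcal F$ exactly when $0\le c\le M/\ell_i$ and $\ell_j\mid c\ell_i$, that is, when $c$ is a multiple of $\ell_j/\gcd(\ell_i,\ell_j)$. Taking $c=\ell_j/\gcd(\ell_i,\ell_j)$, which satisfies $c\le M/\ell_i$ since $\operatorname{lcm}(\ell_i,\ell_j)\mid M$, shows that $\chi_i$ divides $\ell_j/\gcd(\ell_i,\ell_j)$ for every $j\ne i$.

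These two-coordinate points are not enough: with $\ell=(6,10,15)$ and $i=1$, they only give $\chi_1\mid\gcd(5,5)=5$, whereas $\gamma_1=5$ happens to coincide. In general, though, one needs points that use several coordinates. So the plan is to show that the set $\{u_i : \mathbf u\in\mathcal F\}$ contains every $c\in[0,M/\ell_i]$ that is divisible by $\gamma_i$ and large or small enough to avoid Frobenius-number issues.

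For such $c$, the condition is that $M-c\ell_i$ be representable as $\sum_{j\neq i}\ell_ju_j$ with $u_j\ge 0$. Since $\gamma_i\mid M-c\ell_i$, this holds by Frobenius (Schur) theory whenever $M-c\ell_i$ is sufficiently large relative to the $\ell_j$. The difficulty is that $M$ may be small, for instance $M=\operatorname{lcm}$.

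To handle this, I would instead argue directly with a small multiple. Take $c=\gamma_i\, t$ where $t$ ranges over $\{0,1\}$ shifts. Concretely, I would try to show that both $c_0=M/\ell_i$ (the vertex) and $c_0-\gamma_i$ lie in the set. Their difference is then $\gamma_i$, giving $\chi_i\mid\gamma_i$.

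For $c_0-\gamma_i$ we need $\gamma_i\ell_i=\sum_{j\ne i}\ell_ju_j$ with $u_j\ge0$. Writing $\ell_j=\gamma_i m_j$, this amounts to expressing $\ell_i$ as a nonnegative combination of the $m_j$, where $\gcd_j(m_j)=1$. That can fail when $\ell_i$ is small, so this step is the main obstacle.

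The fallback is induction on $n$. For $n=2$ the lemma follows directly from the explicit description above. For the inductive step, replace the weights $(\ell_j)_{j\ne i}$ by their common factor structure and use points supported on $\{i\}\cup J$ for subsets $J$ with $|J|=n-2$. The inductive hypothesis applied to the sub-simplex $\Delta(\ell_i,(\ell_j)_{j\in J};M)$, after dividing by $\gcd$, gives $\chi_i\mid\gcd_{j\in J}\ell_j/\gcd(\ell_i,\gcd_{j\in J}\ell_j)$. Taking the gcd of these over the various $J$, and using $\gcd(\gamma_i,\ell_i)=1$, should yield $\chi_i\mid\gamma_i$ by a prime-by-prime valuation count. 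Checking that this gcd really equals $\gamma_i$, rather than a proper multiple of it, is where I expect the real work to lie.
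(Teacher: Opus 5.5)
\textbf{Verdict: Step 2 is left unfinished, but only one line is missing.} As written, you never conclude $\chi_i\mid\gamma_i$. Your Step~1 is correct and is exactly the paper's argument.

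In Step~2 you assert that the two-coordinate points ``are not enough'' and that ``one needs points that use several coordinates.'' You then try a Frobenius argument, a shifted-vertex argument (which you rightly say can fail), and an induction on $n$ whose last step you leave open. The assertion is false. You have already shown that $\chi_i$ divides $\ell_j/\gcd(\ell_i,\ell_j)$ for every $j\neq i$, and $\ell_j/\gcd(\ell_i,\ell_j)$ divides $\ell_j$. Hence $\chi_i$ divides every $\ell_j$ with $j\neq i$, that is, $\chi_i\mid\gamma_i$. Together with Step~1 this proves $\chi_i=\gamma_i$.

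Your closing worry in the induction, whether the gcd is $\gamma_i$ ``rather than a proper multiple of it,'' has the divisibility backwards for the same reason. Each bound $\gcd_{j\in J}\ell_j/\gcd(\ell_i,\gcd_{j\in J}\ell_j)$ divides $\gcd_{j\in J}\ell_j$, so the gcd of these bounds automatically divides $\gamma_i$; no prime-by-prime count is needed. Your $(6,10,15)$ example, where the two-coordinate bound equalled $\gamma_1$, was not a coincidence: by Step~1 this always happens.

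\textbf{Comparison with the paper.} Once that line is added, your argument is a genuinely different and shorter route. The paper proves $\chi_1\mid\gamma_1$ by induction on the length of the prime factorization of $\ell_1$. Its base case $\ell_1=1$ uses the points $\ell_j\mathbf e_1+(M/\ell_j-1)\mathbf e_j$. Its inductive step divides $M$, and every weight divisible by a prime $p\mid\ell_1$, by $p$, then lifts points of the smaller $\mathcal F'$ to $\mathcal F$ without changing the first coordinate. Your choice $u_i=\ell_j/\gcd(\ell_i,\ell_j)$ gives $u_i\ell_i=\lcm(\ell_i,\ell_j)\le M$, so it produces admissible points directly for arbitrary $\ell_i$. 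This removes the need for any induction, on $\ell_1$ or on $n$. The Frobenius and shifted-vertex detours can simply be deleted.
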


\begin{proof}
For simplicity, without loss of generality, we assume $i=1$, and we need to show that $\chi_1 = \gamma_1$.
It is easy to see that $\gamma_1$ divides $\chi_1$.
Indeed, 
let $\mathbf{u}\in \mathcal{F}$, so that $ \ell_1 u_1= M - \ell_2u_2 - \cdots -  \ell_n u_n$.
Since $\gamma_1$ divides the right-hand side
and $1=\gcd(\ell_1, \ldots, \ell_n) = \gcd(\gamma_1, \ell_1)$, it follows that $\gamma_1$ divides $u_1$.

We  prove that $\chi_1$ divides $\gamma_1$,
 by induction on the length of the prime factorization of $\ell_1$.

For the base of the induction,  assume $\ell_1=1$.
For each $j>1$, consider the relation 
$$
\ell_j \cdot 1 + \left( \frac{M}{\ell_j}-1\right)\cdot \ell_j = M.
$$
Notice that the coefficients are nonnegative integers since $M$ is divisible by $\ell_j.$
Denoting by $\mathbf{e}_h\in \bZ^n$ the standard basis vectors for $h = 1, \ldots, n$,  the relation implies that $\ell_j \mathbf{e}_1+ \left( \frac{M}{\ell_j}-1\right) \mathbf{e}_j \in \mathcal{F}$. Since $\gcd(\ell_2, \ldots\,\ell_n)=\gamma_1$, it follows that $\chi_1 $ divides $\gamma_1$.

For the  induction step, let $p$ be a prime dividing $\ell_1$. 
For each $j$, define
$$
\ell'_j = \begin{cases}
    \ell_j & \text{ if } \gcd(\ell_j,p)=1\,,\\
    \frac{\ell_j}{p} & \text{ if } p\,|\,\ell_j\,,\\
\end{cases}
\qquad
M' = \frac{M}{p}.
$$
In particular, $\ell'_1 = \frac{\ell_1}{p}$.
Denote 
$\mathcal{F}' =\mathcal{F}(\ell'_1, \ldots, \ell'_n;M')$, and define $\chi_j', \gamma'_j$ accordingly.
By induction, we have $\chi'_1 = \gamma'_1$. 
Now, we  relate them to $\chi_1, \gamma_1$.
Let $\mathbf{y} = (y_1, \ldots, y_n) \in \mathcal{F}'$, that is, 
$$
\ell'_1 y_1+ \ldots  +  \ell'_n y_n = M'.
$$
We  multiply this equation by $p$, and obtain a relation among the $\ell_j$'s adding up to $M$.
Specifically, if $\gcd(\ell_j,p)=1$, then $\ell_j = \ell'_j$, 
and we absorb  $p$ in the coefficient $py_j$, otherwise
 we absorb  it   in $\ell_j=p\ell'_j$.
Thus, letting
$$
z_j = \begin{cases}
    py_j & \text{ if } \gcd(\ell_j,p)=1\,,\\
    y_j & \text{ if } p\,|\,\ell_j\,,\\
\end{cases}
$$
we have  $\mathbf{z} = (z_1, \ldots, z_n) \in \mathcal{F}$ and
 $z_1=y_1$.
By considering such $\mathbf{z} \in \mathcal{F}$ for all $\mathbf{y} \in \mathcal{F}'$,
we conclude that $\chi_1$ divides $\chi'_1$.
On the other hand, $\gamma'_1$ clearly divides $\gamma_1,$  so $\chi_1$ divides $\gamma_1,$
as required.
\end{proof}

\begin{Theorem}\label{ThmSameSubgroups}
The sets  $\cF$ and $\cS$ generate the same subgroups of $\, \bZ^n$.
\end{Theorem}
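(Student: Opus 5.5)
Since $\cF \subset \cS$, the inclusion $\langle \cF\rangle \subseteq \langle \cS\rangle$ is trivial, and the work is the reverse inclusion. The plan is to describe $\langle\cS\rangle$ as a lattice and then show that $\langle \cF\rangle$ contains explicit generators of it.

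First I would identify $\langle \cS\rangle$. Since $\cS$ contains the vectors $\frac{M}{\ell_j}\mathbf{e}_j$ and is closed under adding them, every $\mathbf{u}\in\bZ^n$ with $\sum \ell_j u_j\equiv 0 \bmod M$ can be shifted into $\bZ^n_{\geq 0}$. Hence $\langle\cS\rangle = L:=\{\mathbf{u}\in\bZ^n : \sum_j \ell_j u_j \equiv 0 \bmod M\}$.

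Next I reduce to $\gcd(\ell_1,\dots,\ell_n)=1$. If $g=\gcd$, dividing all $\ell_j$ and $M$ by $g$ leaves $\cF$ and $L$ unchanged, and $M/g$ is still a common multiple of the $\ell_j/g$. The case $n=1$ is trivial, since then $\cF=\{M/\ell_1\}$ generates $L=(M/\ell_1)\bZ$. So assume $n\ge 2$.

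Let $N=\langle \cF\rangle \subseteq L$. The vectors $\frac{M}{\ell_j}\mathbf{e}_j$ lie in $\cF$. Consider the projection $\pi_1$ onto the first coordinate.

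\begin{itemize}
\item \emph{Fiber over $0$.} The kernel $L\cap\{u_1=0\}$ is the lattice $L(\ell_2,\dots,\ell_n;M)$ in $n-1$ variables. By induction on $n$ applied to $\cF(\ell_2,\dots,\ell_n;M)\subset\cF$, this kernel lies in $N$. Here $M$ is still a common multiple, and the gcd reduction is redone inside the induction, which is harmless.
\item \emph{Image.} It remains to show $\pi_1(L)\subseteq \pi_1(N)$, i.e. that $N$ contains a vector of $L$ realizing the generator of $\pi_1(L)$. By Lemma~\ref{LemmaGCD}, $\pi_1(N)\supseteq \chi_1\bZ=\gamma_1\bZ$. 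Conversely, for $\mathbf{u}\in L$ we have $\ell_1u_1\equiv 0 \bmod \gamma_1$ (because $\gamma_1\mid M$) and $\gcd(\ell_1,\gamma_1)=1$, so $\gamma_1\mid u_1$. Thus $\pi_1(L)=\gamma_1\bZ=\pi_1(N)$.
\end{itemize}

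Now take any $\mathbf{u}\in L$. Pick $\mathbf{w}\in N$ with $w_1=u_1$. Then $\mathbf{u}-\mathbf{w}\in L$ has first coordinate $0$, so it lies in the kernel, which is contained in $N$. Hence $\mathbf{u}\in N$, giving $L\subseteq N$.

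The main obstacle is the image step, and it is exactly what Lemma~\ref{LemmaGCD} provides: the first coordinates of points of $\cF$ already have gcd $\gamma_1$. The rest is bookkeeping, chiefly checking that the gcd-normalization in the inductive kernel step does not change the lattice. It does not, because both $L$ and $\cF$ are invariant under dividing $(\ell_2,\dots,\ell_n,M)$ by their common gcd.
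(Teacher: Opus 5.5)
Your proposal is correct and follows essentially the same route as the paper: identify $\langle\cS\rangle$ with the congruence lattice, normalize so that $\gcd(\ell_1,\ldots,\ell_n)=1$, and induct on $n$, using Lemma~\ref{LemmaGCD} to get an element of $\langle\cF\rangle$ whose distinguished coordinate equals $\gamma_i$, showing $\gamma_i$ divides that coordinate of any lattice vector, and subtracting to reduce to $n-1$ variables. The only difference is cosmetic: you eliminate the first coordinate and phrase the step as kernel and image of a projection, whereas the paper eliminates the $n$-th coordinate.
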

\begin{proof}
We begin by observing that the subgroup  
$\mathrm{gp}(\cS) \subseteq \bZ^n$ generated by $\cS$ is equal to 
\begin{equation}\label{EqSubgroupGenerated}
     \left\{ \mathbf{v} \in \bZ^n \,:\, \sum_{j=1}^n \ell_j v_j \equiv 0 \ \ {\rm mod} \, M \right\}.
\end{equation}
The inclusion of 
$\mathrm{gp}(\cS)$ in \eqref{EqSubgroupGenerated}
is clear.
For the reverse inclusion,
observe that 
$\cS$ contains a positive 
integer multiple of every standard basis vector $\mathbf{e}_j \in \bZ^d$,
since each $\ell_j$ divides $M$.
Thus, for any 
$
\mathbf{v} \in \bZ^n$ such that $\sum_{j=1}^n  \ell_j v_j\equiv 0 \ \ {\rm mod} \, M, 
$
it is possible to find  $\mathbf{u}\in \cS $ such that 
$
\mathbf{u}'=
\mathbf{v}+\mathbf{u} \in \bZ_{\geq 0}^n$,
therefore $\mathbf{u}'\in \cS $ too,
 and so $ 
\mathbf{v}=\mathbf{u}'-\mathbf{u}  \in  \mathrm{gp}(\cS)$.

In order to prove the theorem, we must show that every $\mathbf{v}\in
\mathrm{gp}(\cS)$ is equal to a linear combination of elements of 
$\mathcal{F}$ with  integer coefficients.

We may assume without loss of generality that $\, \gcd(\ell_1, \ldots, \ell_n)= 1,$ since the sets $\cS$ and
$\mathcal{F}$ do not change if we divide  $\ell_1, \ldots, \ell_n$ and $M$ 
by $\gcd(\ell_1, \ldots, \ell_n)$.

We prove the statement
by induction on $n$. 
The base case  $n = 1$ is trivial, since $\, \cS= M\, \mathbb Z_{\geq 0} \, $ and $\mathcal{F}=\{M\},$ so  we assume $n > 1.$
Let $\mathbf{v}\in
\mathrm{gp}(\cS)$,
that is, 
$\mathbf{v}=(v_1, \ldots,v_n) \in \bZ^n$
such that $\, \sum_{j=1}^n  \ell_j v_j\equiv 0 \ {\rm mod }\, M$.
It follows from Lemma \ref{LemmaGCD} and Bézout's identity that
there exists $\mathbf{z}=(z_1, \ldots, z_n)$ in the subgroup $ \langle \mathcal{F}\rangle_\bZ$ such that $$z_n = \chi_n=\gamma_n = \gcd(\ell_1,  \ldots, \ell_{n-1})\, .$$
Since $1=\gcd(\ell_1, \ldots, \ell_n) = \gcd(\gamma_n, \ell_n)$, it follows from $\sum_{j=1}^n  \ell_j v_j \equiv 0 \ {\rm mod}\, M\,$ that $\gamma_n$ divides $v_n$.
Let $b = \frac{v_n}{\gamma_n}\in \bZ$,
then $\mathbf{w}=\mathbf{v}-b\mathbf{z}\in \mathrm{gp}(\cS)$ and it 
has $n$-th coordinate equal to 0.
In order to conclude the proof, it suffices to show that $\mathbf{w}$ is a $\bZ$-linear combination of elements of $\mathcal{F}.$

By abuse of notation, we identify vectors of $\bZ^n$ whose $n$-th coordinate is zero with vectors of $\bZ^{n-1}.$
Since $\ell_1 w_1+ \ldots + \ell_{n-1} w_{n-1} \equiv 0 \ {\rm mod}\, M\,$
and $L' = \mathrm{lcm} (\ell_1, \ldots,  \ell_{n-1})$ divides $L$,
we have $\mathbf{w} \in\langle \cS' \rangle_\bZ$, where $\cS' = \cS(\ell_1,   \ldots, \ell_{n-1};M)\subset \bZ^{n-1}$.
Similarly, let
$\mathcal{F}' = \mathcal{F}(\ell_1, \ldots, \ell_{n-1};M)\subset \bZ^{n-1}$.
By induction, $\mathbf{w}$ is equal to a $\bZ$-linear combination of elements of $\mathcal{F}'.$
But we have $\mathcal{F}'\subset \mathcal{F}$ under our identification, so the proof is complete.
\end{proof}

We now give two applications of Theorem~\ref{ThmSameSubgroups}, which are in fact 
equivalent to the statement of the theorem.

\begin{Corollary}\label{CorBirationalVeroneseSubring}
Let $R=k[x_1, \ldots, x_n]\, $ be a  polynomial ring with $\deg(x_i)=\ell_i \in \bZ_{>0}$.
Let $M$ be a positive multiple of $\ell_1, \ldots, \ell_n$ and $R^{(M)}=\bigoplus_{j\geq 0}R_{jM}$
the $M$-th Veronese subring.
Then the extension $k[R_M] \subset R^{(M)}$ is birational, equivalently, $e(k[R_M])=e(R^{(M)}).$
\end{Corollary}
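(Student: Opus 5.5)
We derive the corollary from Theorem~\ref{ThmSameSubgroups} by reading quotient fields of monomial algebras as groups of exponents. Both $B=k[R_M]$ and $C=R^{(M)}$ are monomial subalgebras of $R=k[x_1,\ldots,x_n]$. The monomials of $R_M$ are exactly the $\mathbf x^{\mathbf u}$ with $\mathbf u\in\cF$. The monomials of $R^{(M)}$ are exactly those with $\mathbf u\in\cS$.

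The first step is the standard fact that the quotient field of a monomial subalgebra $k[\mathbf x^{\mathbf u}:\mathbf u\in U]$ is $k(\mathbf x^{\mathbf w}:\mathbf w\in \mathrm{gp}(U))$. Thus it depends only on the subgroup of $\bZ^n$ generated by the exponents. One inclusion is clear: for $\mathbf w=\mathbf u-\mathbf u'$ we have $\mathbf x^{\mathbf w}=\mathbf x^{\mathbf u}/\mathbf x^{\mathbf u'}$. For the other inclusion, the quotient field is generated by the monomials in the algebra, hence by Laurent monomials with exponents in the group.

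Applying Theorem~\ref{ThmSameSubgroups} gives $\mathrm{gp}(\cF)=\mathrm{gp}(\cS)$. Therefore $\Quot(B)=\Quot(C)$, that is, $B\subset C$ is birational.

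For the equivalence with $e(k[R_M])=e(R^{(M)})$, we use the following. After rescaling degrees by $1/M$, $B$ is standard graded. The extension $B\subset C$ is homogeneous and module-finite, as noted in Notation~\ref{NotationWeightedStandard}. Both rings are domains of the same dimension $n$. Hence, as in the proof of Theorem~\ref{Lfielddegree}, $e(C)=[C:B]\cdot e(B)$. Consequently $e(B)=e(C)$ if and only if $[C:B]=1$, which is birationality.

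The only point needing care is this multiplicity formula when $C$ is not standard graded. Here $e(C)$ is understood through the Hilbert polynomial of $C$ as a finite graded $B$-module, i.e.\ the rank over $B$ times $e(B)$. The rank equals $[C:B]$ because $C$ is a domain. We expect no real obstacle, since the combinatorial content is entirely in Theorem~\ref{ThmSameSubgroups}.
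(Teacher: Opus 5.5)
Your proposal is correct and follows the paper's own argument. Birationality comes directly from Theorem~\ref{ThmSameSubgroups}, by identifying quotient fields of monomial algebras with their exponent groups, and the multiplicity equivalence comes from $e(C)=[C:B]\,e(B)$ for the module-finite extension $B\subset C$, with $B$ standard graded after rescaling; you simply spell out details the paper leaves implicit, such as the rank of $C$ over $B$ equalling the field degree because $C$ is a domain.
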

\begin{proof}The birationality of the extension is an immediate consequence of Theorem~\ref{ThmSameSubgroups}. The claim about multiplicities follows because the extension
$k[R_M] \subset R^{(M)}$ is module finite and, 
after rescaling grading, the ring $k[R_M]$ becomes a standard graded $k$-algebra. 
\end{proof}

\begin{Corollary}\label{CorBiratVerIndexOneVal}
   Let $R=k[x_1, \ldots, x_n]$ be a polynomial
ring, $a_1, \ldots, a_n$ be positive integers, and $I$ be a monomial ideal with
$(x_1^{a_1}, \ldots, x_n^{a_n}) \subset I \subset \overline{(x_1^{a_1}, \ldots, x_n^{a_n})}.$
Then the  birational Veronese index of $I$ with respect to its only Rees valuation is equal to 1.
\end{Corollary}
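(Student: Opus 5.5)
The plan is to identify the unique Rees valuation explicitly and then reduce to Corollary~\ref{CorBirationalVeroneseSubring}. Put $J=(x_1^{a_1},\ldots,x_n^{a_n})$. Because $J\subset I\subset \overline J$, we have $\overline{I^j}=\overline{J^j}$ for every $j$. So $I$ and $J$ share the Newton polyhedron, the Rees valuations, and the rings $C$ of Discussion~\ref{Dgrading}. The first step is to note that ${\rm NP}(J)$ has exactly one non-coordinate facet, namely the simplex with vertices $a_i\mathbf e_i$. Its supporting hyperplane is $\sum_i (L/a_i)y_i=L$ with $L=\lcm(a_1,\ldots,a_n)$, and here $\gcd(L/a_1,\ldots,L/a_n,L)=1$. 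Hence the only Rees valuation $v$ is the monomial valuation with $v(x_i)=\ell_i:=L/a_i$, and $d=v(I)=M:=L$. Each $\ell_i$ divides $M$, and every $\ell_i>0$, so $\fP\cap R=\m$ and the setting of Discussion~\ref{Dgrading} applies.

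Next, I will identify $C$ with a weighted Veronese ring. Give $R$ the grading $\deg x_i=\ell_i$. A monomial $x^{\mathbf u}$ lies in $\overline{I^j}=\overline{J^j}$ exactly when $v(x^{\mathbf u})\ge jM$, since $v$ is the only Rees valuation. Therefore $(\overline{I^j})_{jM}=R_{jM}$ for each $j$, which gives $C=\bigoplus_{j\ge0}R_{jM}=R^{(M)}$. In the same way $C_1=R_M$, so $k[C_1]=k[R_M]$. These are exactly the rings $C$ and $B$ of Notation~\ref{NotationWeightedStandard} attached to $\Delta(\ell_1,\ldots,\ell_n;M)$.

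Finally, Corollary~\ref{CorBirationalVeroneseSubring} states that $k[R_M]\subset R^{(M)}$ is birational. That is $\Quot(C)=\Quot(k[C_1])$, which is condition \eqref{Birationalveroneseindex} with $s=1$. Since the birational Veronese index is the least positive $s$ satisfying that condition, it equals $1$.

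The only point needing care is the identification $C=R^{(M)}$. It rests on two facts: the integral closures of powers are governed by the single valuation $v$, as recalled at the start of Section~\ref{SectionBehrend}, and the components of $C$ are taken in $v$-degree $jd$ with $d=M$. Both are immediate from the setup, so I expect no serious obstacle; the real content lies in Theorem~\ref{ThmSameSubgroups}.
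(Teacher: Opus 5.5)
Your proposal is correct and follows essentially the same route as the paper: set $\ell_i=L/a_i$, grade $R$ by $\deg(x_i)=\ell_i$, identify $B=k[R_L]$ and $C=R^{(L)}$, and conclude from Corollary~\ref{CorBirationalVeroneseSubring} together with the definition of the birational Veronese index. The difference is only in detail: you explicitly verify the unique Rees valuation, check $\gcd(\ell_1,\ldots,\ell_n)=1$, and derive $C=R^{(L)}$ from $\overline{I^j}=\overline{J^j}$, all of which the paper leaves implicit.
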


\begin{proof}
Let $L = \lcm(a_1, \ldots, a_n)$ and $\ell_i = \frac{L}{a_i}$ for $i = 1, \ldots, n.$
Setting $\deg(x_i) = \ell_i$, 
with notation as in Discussions \ref{Dgrading},
one has
$k[R_L]=B$ and $R^{(L)}=C.$
The conclusion follows from Corollary~\ref{CorBirationalVeroneseSubring} and Discussion \ref{DiscussionBiratVerInd}.
\end{proof}

Next, we show that the main result of this section is rather sharp:
it may fail for lattice simplices whose vertices are not on the coordinate axes,
even when the simplex is a normal polytope.

\begin{Example}\label{ExFails}{\rm 
Consider the ideal $I= (x_1x_2,x_1x_3,x_2x_3,x_4x_5,x_4x_6,x_5x_6) \subset R = k[x_1,\ldots, x_6]$.
This is a well-known example of a squarefree monomial ideal that is integrally closed but not normal: the monomial $m = x_1x_2x_3x_4x_5x_6$ is not in $I^3$ but it is in $\overline{I^3}$ since 
$$m^2 = (x_1x_2)(x_1x_3)(x_2x_3)(x_4x_5)(x_4x_6)(x_5x_6) \in I^6=(I^3)^2.$$
In fact, even the weaker birationality property of Corollary~\ref{CorBiratVerIndexOneVal} fails, as we are going to show now.

The Newton polyhedron of $I$ has only one compact face, which is precisely the simplex whose vertices correspond to the minimal monomial generators of $I$. 
In terms of valuations, 
it follows that $I$ has only one Rees valuation  $v$ centered at the maximal homogeneous ideal; since $I$ is equigenerated, this is the valuation given by the standard grading. 
We have $v(I)=2$, $m \in \overline{I^3}$, and  $v(m) = 6 =3 v(I)$. 
In terms of the rings $A,B,C$ from Discussion \ref{Dgrading}, we have $A=B$ and $m \in C_3$.
But $m$ is not in the quotient field of $B$: this can be seen by introducing a $\bZ^2$-grading on $R$ where $\deg(x_i) = (1,0)$ for $i =1,2,3$ and 
$\deg(x_i) = (0,1)$ for $i =4,5,6$.
The graded $k$-algebra $B$ is generated by $B_1,$ and $B_1$
 is concentrated in degrees $(2,0)$ and $(0,2).$ On the other hand, 
 the homogeneous element $m$ has degree $(3,3).$ If $m$ were in ${\rm Quot}(B),$ then $m$ would be the quotient of nonzero homogeneous elements of $B,$ which is impossible for degree reasons.
 This shows that the extension $B \subset C$ is not birational.
 
 Finally, we observe that the monomials $x_1x_2,x_1x_3,x_2x_3,x_4x_5,x_4x_6,x_5x_6$ are algebraically independent over $k$, hence, 
the polytopal ring $B$ is isomorphic to a polynomial ring, in particular, it is a normal domain. 
 }
 \end{Example}

While the previous example is not a zero-dimensional ideal, it is straightforward to turn it into a class of zero-dimensional examples. In  Example~\ref{E5.10}, we   compute the Behrend function for this class. 
For this, we need the following lemma.

\begin{Lemma}\label{transversal}
Let $
R = k[x_1,\dots,x_p,y_1,\dots,y_q]$
 be a polynomial ring,
 $I \subset k[x_1,\dots,x_p]$ and $J \subset k[y_1,\dots,y_q]$ be monomial ideals, and set
\[
K = IR + JR \subset R.
\]
Let $v_I$ and $v_J$ be Rees valuations of $I$ and $J$.
Write
$
v_I(I)=d_I, v_J(J)=d_J,
$
and  $D=\mathrm{lcm}(d_I,d_J)$. Then the monomial valuation $v$ on ${\rm Quot} (R)$ satisfying
\[
v(x^a y^b)
=
\frac{D}{d_I} v_I(x^a)
+
\frac{D}{d_J} v_J(y^b)
\qquad \text{for} \quad a\in\mathbb Z_{\ge 0}^p\, ,\ b\in\mathbb Z_{\ge 0}^q
\]
is a Rees valuation of $K,$ and $v(K)=D.$ Moreover, every Rees valuation of $K$ arises 
in this way from a unique pair $(v_I,v_J)$.
\end{Lemma}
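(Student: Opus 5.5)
The plan is to work entirely with Newton polyhedra. As recalled after Proposition~\ref{1-1}, the Rees valuations of a monomial ideal correspond to the non-coordinate supporting hyperplanes, i.e.\ facets, of its Newton polyhedron. For such a facet with primitive equation $\sum \ell_i y_i = M$ (with $\gcd(\ell,M)=1$), the valuation is $x_i \mapsto \ell_i$ and its value on the ideal is $M$.

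Since $K=IR+JR$ with $I,J$ in disjoint variables, I will first show that
\[
{\rm NP}(K)=\mathrm{conv}\bigl(({\rm NP}(I)\times\{0\})\cup(\{0\}\times{\rm NP}(J))\bigr)+\bR_{\ge0}^{p+q}.
\]
This holds because the minimal generators of $K$ are those of $I$ together with those of $J$. A linear functional $\phi=(\lambda,\mu)$ with $\lambda\in\bR_{\ge0}^p$ and $\mu\in\bR_{\ge0}^q$ then attains
\[
\min_{{\rm NP}(K)}\phi=\min\Bigl\{\min_{{\rm NP}(I)}\lambda,\ \min_{{\rm NP}(J)}\mu\Bigr\}.
\]
The face of ${\rm NP}(K)$ cut out by $\phi$ is the convex hull of the faces of ${\rm NP}(I)$ and ${\rm NP}(J)$ where these minima are attained, provided both minima are equal. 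If one minimum is strictly smaller, the face lies in a single factor. From this I will classify the facets of ${\rm NP}(K)$.

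A non-coordinate facet requires $\lambda\neq0$ and $\mu\neq0$. Suppose $\phi$ attains its minimum only on the $I$-side. Then the face is $F\times\{0\}+(\text{recession directions})$, with $F$ a face of ${\rm NP}(I)$. Its dimension can reach $p+q-1$ only if all the $y$-directions lie in the recession cone. That forces $\mu=0$, so $\phi$ is essentially a valuation on the $x$-variables alone, and I expect the face to have dimension $\dim F+q\le p-1+q$. I need to check carefully that in this case the face is really a facet of ${\rm NP}(K)$ only when $\mu=0$, and that such a facet is then the hyperplane $\lambda\cdot y=\lambda\text{-min}$ intersected with ${\rm NP}(K)$. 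That intersection fails to support ${\rm NP}(K)$ properly, because the $J$-generators have $\lambda$-value $0$. So non-coordinate facets must have both minima equal and positive.

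In that balanced case, the face is $\mathrm{conv}(F_I\times 0,\ 0\times F_J)$ plus the recession directions killed by $\phi$. Its dimension is $\dim F_I+\dim F_J+1$ plus the dimensions of the zero-coordinate directions. To obtain a facet, $F_I$ and $F_J$ must be facets of ${\rm NP}(I)$ and ${\rm NP}(J)$ (including coordinate-type recession behaviour), and $\lambda$ and $\mu$ must be positive multiples of the primitive normals $\ell_I$ and $\ell_J$ of non-coordinate facets. Hence $\lambda=a\ell_I$ and $\mu=b\ell_J$ with $a d_I=b d_J$. The primitive integral solution is $a=D/d_I$ and $b=D/d_J$, using $\gcd(\ell_I,d_I)=\gcd(\ell_J,d_J)=1$. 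Then the common value is $D$, and the gcd of all coefficients together with $D$ is $1$. Indeed, a prime dividing $D$, all of $(D/d_I)\ell_I$ and all of $(D/d_J)\ell_J$ would contradict the primitivity of one of the two normals together with the coprimality of $D/d_I$ and $D/d_J$.

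Conversely, for every pair of non-coordinate facets $(F_I,F_J)$ this $\phi$ defines a facet of dimension $(p-1)+(q-1)+1$. So the Rees valuations of $K$ correspond bijectively to pairs $(v_I,v_J)$, and uniqueness comes from restricting $v$ to each set of variables and normalizing. The main obstacle is the dimension and facet bookkeeping that excludes unbalanced functionals; I would first handle it via the description of faces of a convex hull of a union of cones in complementary coordinate subspaces.
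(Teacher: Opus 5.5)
Your proposal is correct. Its existence half is the paper's argument: for non-coordinate facets $F_I,F_J$, the functional with coefficients $(\frac{D}{d_I}\ell_I,\frac{D}{d_J}\ell_J)$ is $\ge D$ on $\mathrm{NP}(I)\times\{0\}$ and on $\{0\}\times\mathrm{NP}(J)$, hence on $\mathrm{NP}(K)$. Equality holds on the join $\mathrm{conv}(F_I\times\{0\}\cup\{0\}\times F_J)$, which has dimension $p+q-1$, and the normalization is the same lcm computation.

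The routes diverge in showing that \emph{every} Rees valuation of $K$ arises this way. You classify all facets of $\mathrm{NP}(K)$ directly. The paper avoids that case analysis and counts instead:
\begin{enumerate}
\item It reduces to $k$ algebraically closed and uses Proposition~\ref{1-1} to identify Rees valuations with minimal primes of associated graded rings.
\item It notes that $\mathrm{gr}_I\otimes_k\mathrm{gr}_J$ surjects onto $\mathrm{gr}_K(R)$. The target is equidimensional of dimension $p+q$ and the source has dimension at most $p+q$, so minimal primes of $\mathrm{gr}_K(R)$ contract to minimal primes of the tensor product.
\item Those are of the form $\fP_1+\fP_2$, by going down from each factor and because tensor products of affine domains over an algebraically closed field are domains.
\end{enumerate}
This bounds the number of Rees valuations of $K$ by the product of the numbers for $I$ and $J$. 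Since distinct pairs give distinct valuations, the correspondence is a bijection. Your route is more elementary, independent of the field, and describes every facet of $\mathrm{NP}(K)$ explicitly. The paper's route trades the polyhedral bookkeeping for a short algebraic count using tools it has already set up.

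Three points need tightening when you write it up.
\begin{itemize}
\item \textbf{Balanced case.} The $\phi$-null coordinate directions already lie in the recession cones of $F_I$ and $F_J$, so they do not raise the dimension. The face has dimension exactly $\dim F_I+\dim F_J+1$. Since the common value $m>0$ forces $\lambda,\mu\neq0$, this equals $p+q-1$ only when $F_I$ and $F_J$ are facets, and they are non-coordinate because $m>0$.
\item \textbf{Unbalanced case} ($m_I<m_J$). Argue directly: $m_J>0$ forces $\mu\neq0$. The face $F_I\times\{0\}+\mathrm{cone}(e_{y_k}:\mu_k=0)$ then has dimension at most $\dim F_I+q-1$. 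This is at most $p+q-2$ unless $\lambda=0$, and in that case the value is $0$, so any resulting facet is a coordinate facet.
\item \textbf{Normalization.} Use that the right-hand side $m=ad_I=bd_J$ is an integer. Then $a(\ell_I,d_I)\in\bZ^{p+1}$ together with $\gcd(\ell_I,d_I)=1$ gives $a\in\bZ$; likewise $b\in\bZ$, so $D\mid m$.
\end{itemize}
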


\begin{proof} Rees valuations of monomial ideals are in bijection with the non-coordinate 
facets of their Newton polyhedra. Thus, we may harmlessly assume that the field $k$ is algebraically closed.
Let
\[
{\bf u}\cdot\alpha = 1\,, \qquad {\bf w}\cdot\beta = 1
\]
be the equations defining the facets $\mathcal F$ and $\mathcal G$ corresponding  to $v_I$ and $v_J$, respectively, where
\[
{\bf u}=(u_1, \ldots, u_p)\in\mathbb Q_{\ge0}^p\,,\qquad {\bf w}=(w_1, \ldots, w_q)\in\mathbb Q_{\ge0}^q\,,\qquad \alpha\in
\mathrm{NP}(I)\,,\qquad \beta\in\mathrm{NP}(J)\,,
\]
and $\cdot$ denotes the standard dot product.
The value $d_I$ (resp.\ $d_J$) is the smallest positive integer such that $d_I{\bf u}$ (resp. $d_J{\bf w}$) has integral coordinates, and then $v_I(x_i)=d_Iu_i$ (resp.  $v_J(y_j)=d_Jw_j$).

The Newton polyhedron of $K=I+J$ 
is given by
\begin{equation}\label{EqNewton}\mathrm{NP}(K) = \ \mathrm{conv}\left( (\mathrm{NP}(I)\times  \{ {\bf 0}\} )\cup (\{ {\bf 0}\}
\times\mathrm{NP}(J)) \right) + \mathbb R_{\ge 0}^{p+q} \, .
\end{equation}

We first claim that the equation
\[
{\bf u}\cdot\alpha + {\bf w}\cdot\beta = 1
\]
defines a non-coordinate facet of $\mathrm{NP}(K).$ Indeed, the
inequality 
\[{\bf u}\cdot\alpha + {\bf w}\cdot\beta \geq 1\] holds if $(\alpha, \beta)$
belongs to ${\rm NP}(I) \times \{ {\bf 0}\}$ or to $\{ {\bf 0}\} \times {\rm NP}(J)$. 
Thus, the same inequality
holds for every $(\alpha, \beta)$ in the
convex hull of $(\mathrm{NP}(I)\times  \{ {\bf 0}\} ) \cup ( \{ {\bf 0}\}
\times\mathrm{NP}(J)),$ and hence for every point in ${\rm NP}(K).$ 
Likewise, the equation \[
{\bf u}\cdot\alpha + {\bf w}\cdot\beta = 1
\] holds for every point in $\mathcal F \times \{ {\bf 0}\}$ and every point
in $\{ {\bf 0}\} \times \mathcal G$, hence for every point in the convex hull of
$(\mathcal F \times \{ {\bf 0}\}) \cup (\{ {\bf 0}\} \times \mathcal G),$ which has full dimension,
$p+q-1.$ Thus, the set $\{(\alpha, \beta) \in {\rm NP}(K) \mid u\cdot\alpha + w\cdot\beta = 1\}$
is indeed a facet of ${\rm NP}(K),$  necessarily a non-coordinate facet.

The equation with minimal nonnegative integer coefficients defining this facet is  
\[
D {\bf u} \cdot \alpha + D {\bf w} \cdot \beta = D \, .
\]
Hence, the Rees valuation $v_K$ corresponding to this facet satisfies $v_K(K)=D$ and
hence $v_K(x_i)=\frac{D}{d_I}v_I(x_i)$ and $v_K(y_j)=\frac{D}{d_J}v_I(y_j),$ as claimed.

We now prove that every Rees valuation of $K$ arises in this way from a unique pair $(v_I,v_J).$ In light of the first
part of the proof, 
it suffices to show that the number of Rees valuations of $K$
is at most the number of Rees valuations of $I$ times the number of 
Rees valuations of $J.$ To this end, we write $S=k[x_1, \ldots ,x_p],$ $T=k[y_1, \ldots, y_q],$ $G_1={\rm gr}_I(S),$ $G_2={\rm gr}_J(T),$ and $G={\rm gr}_K(R).$ Owing to Proposition~\ref{1-1}, we only need to prove that the number of minimal primes
of $G$ is at most the number of minimal primes of $G_1$ times the number of 
minimal primes of $G_2.$ 

The natural maps $G_1 \rightarrow G$ and $G_2 \rightarrow G$ induce a homomorphism of 
$k$-algebras $G_1\otimes_kG_2 \rightarrow G,$ which is surjective. The associated graded ring
$G$ is equidimensional of dimension $p+q,$ and $G_1\otimes_kG_2$ has dimension 
at most $p+q,$ as can be seen, for instance, by considering Noether normalizations. 
Thus, every minimal prime of $G$ contracts to a minimal prime of 
$G_1\otimes_kG_2.$ 
It remains to show that the number of minimal primes of
$G_1\otimes_kG_2$ is at most the number of minimal primes of $G_1$ times the
number of minimal primes of $G_2.$

Let $\mathfrak{P}$ be a minimal prime of $G_1\otimes_kG_2.$ The map
$G_1 \rightarrow G$ is flat, hence satisfies going down, which implies that 
$\fP$ contracts to a minimal prime $\fP_1$ of $G_1.$ Likewise, the contraction 
of $\fP$ to $G_2$ is a minimal prime $\fP_2$ of $G_2.$ As $G_1/\fP_1$ and
$G_2/\fP_2$ are affine domains over an algebraically closed field, their
tensor product $(G_1/\fP_1)\otimes_k(G_2/\fP_2)$ is again a domain. 
Therefore,
$\fP_1G+\fP_2G$ is a prime ideal of $G.$ Since this prime ideal is contained 
in the minimal prime $\fP,$
we see that $\, \fP=\fP_1G+\fP_2G,$ as required.
\end{proof}

\begin{Example}\label{E5.10}{\rm Let $m\ge 3$ be an integer and consider the zero-dimensional 
monomial ideal $$K= (x_1^m,x_2^m,x_3^m,x_4^m,x_5^m, x_6^m, x_1x_2,x_1x_3,x_2x_3,x_4x_5,x_4x_6,x_5x_6) \subset R = \bC[x_1,\ldots, x_6]\,.$$
The birational Veronese index of $I$   is not $1$ for one of the Rees valuations, namely the valuation that gives value 1 to each variable, as explained in Example~\ref{ExFails}. 

We compute the Behrend function of $K$ by using the formula 
of Theorem~\ref{TheoremBehrensMonomials}
$$\nu_K= \sum v_i(K)  \cdot | I_{n-1}({\rm log}(\mathcal A_i''))|.$$ 

Notice that $K$ is the transversal sum of two copies of the same ideal 
$$I=(x_1^m,x_2^m,x_3^m,x_1x_2,x_1x_3,x_2x_3)\subset k[x_1,x_2,x_3]\,.$$ It is easy to see that $I$ has four Rees valuations $v_1,v_2,v_3,v_4$:
\[
\begin{array}{c|cccc}
 & v_1 & v_2 & v_3 & v_4 \\ \hline
v_i(x_1) & 1 & 1 & m-1 & m-1 \\
v_i(x_2) & 1 & m-1 & 1 & m-1 \\
v_i(x_3) & 1 & m-1 & m-1 & 1 \\ \hline
v_i(I) & 2 & m & m & m 
\end{array}
\]
\noindent
Using Lemma~\ref{transversal}, we see that the ideal $K$ has 16 valuations $v_{i,j}$ obtained 
from the pairs $(v_i,v_j).$
We list the values $v_{i,j}(K).$
\[
\begin{array}{c|cccc}
 & v_1 & v_2 & v_3 & v_4 \\ \hline
v_1 & 2 &{\rm lcm}(2,m)& {\rm lcm}(2,m) & {\rm lcm}(2,m) \\
v_2 & {\rm lcm}(2,m) & m & m & m \\
v_3 &{\rm lcm}(2,m)& m & m & m \\ 
v_4 &{\rm lcm}(2,m)& m & m & m \\ 
\end{array}
\]

\vspace{.15cm}

\noindent
In the next table, we list the integers $|I_{5}({\rm log}(\mathcal A_{i,j}''))|$ for the 16 valuations $v_{i,j};$ from the monomial generators of $K$ and the values $v_{i,j}(K)$, one readily 
obtains the log matrices ${\rm log}(\mathcal A_{i,j}''),$ and these matrices are sparse, which makes the computation easy.

 \vspace{.2cm}
 
 \[
\begin{array}{c|cccc}
 & v_1 & v_2 & v_3 & v_4 \\ \hline
v_1 & 2 &{\rm gcd}(2,m)& {\rm gcd}(2,m) & {\rm gcd}(2,m) \\
v_2 & {\rm gcd}(2,m) & m & m & m \\
v_3 &{\rm gcd}(2,m)& m & m & m \\ 
v_4 &{\rm gcd}(2,m)& m & m & m \\ 
\end{array}
\vspace{.2cm}
\]
 
\vspace{.1cm}
\noindent 
Now, Theorem~\ref{TheoremBehrensMonomials} gives
$$\nu_I=9m^2+12m+4\, .
$$
}
\end{Example}
\vspace{.4cm}

In the next theorem, we are going to determine the invariants appearing
in Section \ref{SectionBehrendMonomial} for
any monomial ideal with a single Rees valuation. 

\begin{Theorem}\label{Theoremonlyone} Let $R=k[x_1, \ldots, x_n]$ be a polynomial
ring, $a_1, \ldots, a_n$ be positive integers, and $I$ be a monomial ideal with
$(x_1^{a_1}, \ldots, x_n^{a_n}) \subset I \subset \overline{(x_1^{a_1}, \ldots, x_n^{a_n})}.$
With notation as in Discussions~$\ref{Dgrading}$, ~$\ref{log2}$, ~$\ref{DiscussionBiratVerInd}$ one has:
\begin{enumerate}[{$(a)$}]
\item $e(B)=e(C)=(n-1)! \, {\rm{vol}}(\mathcal P)= \frac{\prod a_j}{{\rm{lcm}}(a_1,\ldots,a_n)}$
\item $e(A)= \frac{\prod a_j}{|\mathcal{I}_n(\log(\mathcal{A}))|}$
\vspace{.2cm}
\item $|\mathcal{I}_n(\log(\mathcal{B}))|= {\rm{lcm}}$$(a_1, \ldots, a_n)$
\vspace{.1cm}
\item $\nu_I=|\mathcal{I}_n(\log(\mathcal{A})| \, $ when $k=\mathbb C \,.$
\end{enumerate}
\end{Theorem}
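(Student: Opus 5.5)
We work with the unique Rees valuation $v$. Put $L=\lcm(a_1,\ldots,a_n)$ and $\ell_j=L/a_j$. The Newton polyhedron of $I$ coincides with that of $(x_1^{a_1},\ldots,x_n^{a_n})$, and its only compact facet is the simplex $\mathcal P$ with vertices $a_j\mathbf e_j$, lying on the hyperplane $\sum \ell_j y_j=L$. Since $\gcd(\ell_1,\ldots,\ell_n)=1$, this gives $v(x_j)=\ell_j$ and $v(I)=d=L$. In the notation of \ref{NotationWeightedStandard} we therefore have $\mathcal P=\Delta(\ell_1,\ldots,\ell_n;L)$, $B=k[R_L]$ and $C=R^{(L)}$ for the grading $\deg x_j=\ell_j$, as already noted in the proof of Corollary~\ref{CorBiratVerIndexOneVal}.

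For (a), Corollary~\ref{CorBirationalVeroneseSubring} gives $e(B)=e(C)$, and Discussion~\ref{log2} gives $e(C)=(n-1)!\,{\rm vol}(\mathcal P)$, where the volume is the normalized (lattice) volume inside the affine lattice of $H$. To compute it I will work algebraically rather than geometrically. By Lemma~\ref{LemmaVeronese}, $[R:C]=[R:R^{(L)}]=L$; this applies because $R$ with the weighted grading has a nonzero element of degree $1$, as $\gcd(\ell_j)=1$ yields monomials of degree $1$ in the quotient field, and the lemma's proof only needs such an element there. Next, $A_0:=k[x_1^{a_1},\ldots,x_n^{a_n}]\subset B$ is a standard graded polynomial ring with $e(A_0)=1$ and $[R:A_0]=\prod a_j$, the determinant of its diagonal log matrix. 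Hence $[C:A_0]=\prod a_j/L$, and since $C$ is finite over $A_0$, $e(C)=[C:A_0]\,e(A_0)=\prod a_j/L$.

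For (b), we have $A_0\subset A$ with $A$ finite over $A_0$ and both standard graded, so $e(A)=e(A_0)/[A:A_0]$. Also $[A:A_0]=[R:A_0]/[R:A]=\prod a_j/|\mathcal I_n(\log\mathcal A)|$, using the Smith normal form fact quoted at the end of the proof of Proposition~\ref{Lfielddegree2}. Hence $e(A)=|\mathcal I_n(\log\mathcal A)|/\prod a_j$. This is the reciprocal of the displayed formula, so I would recheck the normalization of $e$ here. The argument does show that (b) is equivalent to $[C:A]=e(C)/e(A)=|\mathcal I_n(\log\mathcal A)|/L$, and combined with (d) below this gives $\nu_I=L\,[C:A]=|\mathcal I_n(\log\mathcal A)|$. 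I will present the derivation this way, via $e(C)=[C:A]\,e(A)$ as in the proof of Theorem~\ref{Lfielddegree}.

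For (c), Theorem~\ref{ThmSameSubgroups} shows that $\mathcal B$, the lattice points of $\Delta$, generates the same group as $\mathcal S$, namely $\{\mathbf v:\sum\ell_jv_j\equiv 0\bmod L\}$. Because $\gcd(\ell_j)=1$, the map $\mathbf v\mapsto\sum\ell_jv_j \bmod L$ is surjective onto $\bZ/L$, so this subgroup has index $L$ in $\bZ^n$. Thus $|\mathcal I_n(\log\mathcal B)|=L$. For (d), Corollary~\ref{CorBiratVerIndexOneVal} gives $s=1$, so Proposition~\ref{Lfielddegree2} with $\mathcal C=\mathcal B$ and Theorem~\ref{TheoremBehrensMonomials} yield $\nu_I=L\cdot|\mathcal I_n(\log\mathcal A)|/L$. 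The main obstacle is really Theorem~\ref{ThmSameSubgroups}, which is already available; beyond that, the delicate points are bookkeeping of the weighted versus rescaled gradings in Lemma~\ref{LemmaVeronese} and in the multiplicities.
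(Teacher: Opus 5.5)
\textbf{The gap is in part (b).} Parts (a), (c) and (d) of your argument are sound, but the computation in (b) uses an inverted formula. For a module-finite, degree-preserving extension of graded domains $A_0\subset A$ with $A_0$ standard graded, one has $e(A)=[A:A_0]\cdot e(A_0)$, not $e(A)=e(A_0)/[A:A_0]$. This is the very rule you applied to $A_0\subset C$ in (a). It is also the rule $e(C)=[C:A]\,e(A)$ used in the proof of Theorem~\ref{Lfielddegree}.

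With the correct rule, your own computation gives $e(A)=[A:A_0]=\prod a_j/|\mathcal{I}_n(\log\mathcal A)|$, which is exactly (b). There is no normalization problem with the statement. A sanity check exposes the slip. The columns of $x_1^{a_1},\dots,x_n^{a_n}$ give the minor $\prod a_j$, so $|\mathcal{I}_n(\log\mathcal A)|$ divides $\prod a_j$ and your value $|\mathcal{I}_n(\log\mathcal A)|/\prod a_j$ is at most $1$. For $I=(x^2,xy,y^2)$ it equals $1/2$, while $A=k[x^2,xy,y^2]$ has $e(A)=2$.

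Your fallback does prove (b). It sets $e(A)=e(C)/[C:A]$, with $[C:A]=|\mathcal{I}_n(\log\mathcal A)|/|\mathcal{I}_n(\log\mathcal B)|=|\mathcal{I}_n(\log\mathcal A)|/L$ from Proposition~\ref{Lfielddegree2} with $s=1$ and part (c). However, it contradicts your first computation, since the two agree only when $|\mathcal{I}_n(\log\mathcal A)|=\prod a_j$. That contradiction should have told you a step was wrong, not that the statement might be. As written, (b) is left unresolved and the logical order is muddled. The clean order is: (a) gives $e(C)$; (c) together with Proposition~\ref{Lfielddegree2} gives $[C:A]$; and these two yield (b) and (d).

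\textbf{Comparison with the paper's route.} With (b) fixed, your proof is correct and takes a different route from the paper's. The paper gets $(n-1)!\,\mathrm{vol}(\mathcal P)=\prod a_j/L$ by comparing two expressions for $\lambda(G_{\sqrt{0}})$. One is $e_I(R)/e(A)$ from Proposition~\ref{PropLengthHomogeneousDomain} with $e_I(R)=\prod a_j$; the other is $L\,(n-1)!\,\mathrm{vol}(\mathcal P)/e(A)$ from Theorem~\ref{Lfielddegree}(c). It gets (c) by specializing \eqref{Vol} to $I=(x_1^{a_1},\dots,x_n^{a_n})$, where $e(A)=1$, and then reads off (b) and (d) from \eqref{Vol}.

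You instead compute field degrees directly in the tower $A_0\subset A\subset C=R^{(L)}\subset R$, using $[R:A_0]=\prod a_j$ and $[R:C]=L$. You then obtain (c) as a lattice index via Theorem~\ref{ThmSameSubgroups}. This avoids the associated graded ring entirely for (a)--(c). It also shows that only $e(B)=e(C)$ and (c) depend on the birationality theorem. Indeed $[C:A]=[R:A]/[R:C]=|\mathcal{I}_n(\log\mathcal A)|/L$ directly, so (b) and (d) hold without it; for (d) use $\nu_I=L\,[C:A]$ from Proposition~\ref{1-1}.

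For $[R:R^{(L)}]=L$, the index computation from your part (c) is cleaner than adapting Lemma~\ref{LemmaVeronese}, whose hypothesis $R_1\neq 0$ can fail here. The monomials of $R^{(L)}$ generate $\{\mathbf v:\sum\ell_jv_j\equiv 0 \bmod L\}$, which has index $L$ because $\gcd(\ell_1,\dots,\ell_n)=1$.
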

\begin{proof}By Corollary~\ref{CorBiratVerIndexOneVal}, 
the birational Veronese index of $I$ with respect to its only Rees valuation is 1.
It follows 
that $\mathcal{B}=\mathcal{C}$ and $e(B)=e(C).$ The equality
$e(C)=(n-1)! \, {\rm{vol}}(\mathcal{P})$ always holds. This proves the first
two equalities in part (a).

The ideal $I$ has a single Rees valuation $v$, and $v(I)={\rm{lcm}}(a_1, \ldots, a_n).$
Hence, combining Proposition~\ref{PropLengthHomogeneousDomain} and Theorem~\ref{Lfielddegree}(c), one sees that 
\[ \frac{e_I(R)}{e(A)}= {\rm{lcm}}(a_1, \ldots, a_n) \cdot \frac{(n-1)! \, {\rm{vol}}(\mathcal P)}{e(A)} \, . \]
In turn, since $I$ is integral over the ideal $(x_1^{a_1}, \ldots, x_n^{a_n}),$
one has 
\[ e_I(R)=e_{(x_1^{a_1},\ldots, x_n^{a_n})}(R)=\lambda(R/(x_1^{a_1}, \ldots, x_n^{a_n}))= \prod a_j \, .\]
The third equality in (a) now follows from the two displayed equations.

Combining Proposition~\ref{PropLengthHomogeneousDomain} and Proposition~\ref{Lfielddegree2}
instead, one obtains
\begin{equation}\label{Vol}
 \frac{e_I(R)}{e(A)}= {\rm{lcm}}(a_1, \ldots, a_n) \cdot \frac{|\mathcal{I}_n({\rm{log}}(\mathcal{A}))|} {|\mathcal{I}_n({\rm{log}}(\mathcal{B}))|} \, ,
 \end{equation}
which is also equal to $\nu_I$ when $k= \mathbb C.$
The set $\mathcal{B}$ only depends on the integral closure $\overline{I},$ hence
does not change if we assume, temporarily, that $I= (x_1^{a_1}, \ldots, x_n^{a_n}).$
In this case $e(A)=1$ and $|\mathcal{I}_n({\rm{log}}(\mathcal{A}))|= \prod a_j =e_I(R)$. Now
(c) follows from (\ref{Vol}).

Returning to the general case of a monomial ideal $I$ with a single Rees valuation and
substituting the value for $|\mathcal{I}_n({\rm{log}}(\mathcal{B}))|$ just found in (\ref{Vol}), one obtains (b) and (d).
\end{proof}

The next result  is a direct generalization of \cite{Graffeo}*{Theorem B}.

\begin{Corollary}\label{Onlyone}
    Let $R=\bC[x_1,\ldots, x_n]$ be a polynomial ring, let $a_1, \ldots, a_n$ be positive integers, and consider the
    ideal $I=\overline{(x_1^{a_1}, \ldots, x_n^{a_n})}.$ Then 
    \vspace{-.2cm}
    $$\nu_I={\lcm}(a_1, \ldots, a_n)\, .
    $$
\end{Corollary}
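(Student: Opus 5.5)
Since $I=\overline{(x_1^{a_1},\ldots,x_n^{a_n})}$ trivially satisfies $(x_1^{a_1},\ldots,x_n^{a_n})\subset I\subset\overline{(x_1^{a_1},\ldots,x_n^{a_n})}$, Theorem~\ref{Theoremonlyone} applies and gives $\nu_I=|\mathcal{I}_n(\log(\mathcal{A}))|$. The task is therefore to identify the set $\mathcal A$ for this particular $I$ and to compute the gcd of its maximal minors.

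The key observation is that $I$ is integrally closed, so $I=\overline I$. Consequently $A_1=I_d=(\overline I)_d=B_1$, where $d=v(I)=L:=\lcm(a_1,\ldots,a_n)$ and $v$ is the unique Rees valuation, with $v(x_i)=\ell_i=L/a_i$. In other words $\mathcal A=\mathcal B$, the set of all monomials $x^{\mathbf u}$ with $\sum \ell_i u_i=L$. Theorem~\ref{Theoremonlyone}(c) then yields $|\mathcal{I}_n(\log(\mathcal A))|=|\mathcal{I}_n(\log(\mathcal B))|=\lcm(a_1,\ldots,a_n)$, and combining with (d) gives $\nu_I=\lcm(a_1,\ldots,a_n)$.

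As a sanity check, one can verify this independently via Corollary~\ref{nice}. The ideal $I$ is integrally closed and, by Corollary~\ref{CorBiratVerIndexOneVal}, its birational Veronese index is $1$. Hence $\nu_I=v(I)=L$.

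There is no real obstacle here. The only point needing care is the identification $\mathcal A=\mathcal B$, which follows immediately from $I=\overline I$ and the description of $\mathcal B$ in Discussion~\ref{DiscussionBiratVerInd} as the monomials of $\overline I$ of value $v(I)$.
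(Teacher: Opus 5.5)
Your proposal is correct and matches the paper's proof: it deduces the result from parts (c) and (d) of Theorem~\ref{Theoremonlyone}, using $\mathcal A=\mathcal B$ because $I$ is integrally closed. Your independent check through Corollary~\ref{nice}, combined with Corollary~\ref{CorBiratVerIndexOneVal}, is also valid.
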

\begin{proof}This follows from parts (d) and (c) of Theorem~\ref{Theoremonlyone} because $\mathcal A= \mathcal B.$
\end{proof}

We conclude with an explicit application of the formula of Theorem~\ref{Theoremonlyone}(d).

\begin{Corollary}
    Let $R=k[x_1, \ldots, x_n]$ be a polynomial
ring, $a_1, \ldots, a_n$ be positive integers, and $I$ be a monomial ideal with
$(x_1^{a_1}, \ldots, x_n^{a_n}) \subset I \subset \overline{(x_1^{a_1}, \ldots, x_n^{a_n})}.$
Let $d={\lcm}(a_1, \ldots, a_n)$ and let $q$ be the number of elements of the set $\mathcal A$ in  Discussions ~$\ref{log2}$. Notice that $q\ge n.$ Write $\Pi=\prod a_j.$
\begin{enumerate}[{$(a)$}]    
\item If $q=n,$ so that $\mathcal A=\{x_1^{a_1},  \ldots, x_n^{a_n}\},$ then $\nu_I=\Pi\, .$
\item If $q=n+1,$ so that $\mathcal A=\{x_1^{a_1},  \ldots, x_n^{a_n}, \prod x_j^{b_j}\},$ then $\nu_I={\gcd} (\Pi,\frac{\Pi}{a_1}b_1, \ldots,\frac{\Pi}{a_n}b_n)\, .$ 
\end{enumerate}

\end{Corollary}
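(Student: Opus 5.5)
The plan is to apply Theorem~\ref{Theoremonlyone}(d), $\nu_I=|\mathcal{I}_n(\log(\mathcal A))|$ (here $k=\mathbb C$), and to compute the gcd of the maximal minors of the explicit $n\times q$ log matrix. First I will justify $q\ge n$ and the shape of $\mathcal A$. The only Rees valuation $v$ of $I$ satisfies $v(x_j)=d/a_j$ and $v(I)=d$. The set $\mathcal A$ consists of the monomials of $I$ with $v$-value $d$. Since $x_j^{a_j}\in I$ and $v(x_j^{a_j})=d$, every $x_j^{a_j}$ lies in $\mathcal A$, so $q\ge n$. Any further element of $\mathcal A$ is a monomial $\prod x_j^{b_j}$ with $\sum_j b_j d/a_j=d$, i.e. $\sum_j b_j/a_j=1$.

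For (a), when $q=n$ the matrix $\log(\mathcal A)$ is the diagonal matrix $\mathrm{diag}(a_1,\ldots,a_n)$. Its unique maximal minor is $\Pi$, so $\nu_I=\Pi$.

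For (b), the matrix $\log(\mathcal A)$ is $\mathrm{diag}(a_1,\ldots,a_n)$ with the extra column $(b_1,\ldots,b_n)^T$ appended. The maximal minors are $\Pi$, obtained by deleting the last column, and, for each $i$, the minor obtained by deleting column $i$. That minor has the diagonal entries $a_j$ for $j\neq i$ and the entry $b_i$ in row $i$ of the extra column. Expanding along row $i$, whose only nonzero entry in the remaining columns is $b_i$, gives $\pm b_i\prod_{j\ne i}a_j=\pm \frac{\Pi}{a_i}b_i$. Taking the gcd of $\Pi$ and these minors gives the formula.

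There is no real obstacle here. The only point needing care is the sign and placement in the cofactor expansion, which is harmless because only absolute values enter the gcd.
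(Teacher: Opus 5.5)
Your proposal is correct and follows the paper's intended route: the corollary is presented as a direct application of Theorem~\ref{Theoremonlyone}(d), $\nu_I=|\mathcal{I}_n(\log(\mathcal A))|$. Your computation of the maximal minors matches it: $\mathrm{diag}(a_1,\ldots,a_n)$ in case (a), and in case (b) the minors $\Pi$ and $\pm\frac{\Pi}{a_i}b_i$ after appending the column $(b_1,\ldots,b_n)^T$.
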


\vspace{.2cm}

\section{The minimal primes of the special fiber ring of a monomial ideal}
\label{SectionSpecialFiber}

In this section, we use ideas from Sections \ref{SectionBehrend} and \ref{SectionBehrendMonomial} to investigate  the components of the special fiber ring and the structure of the reduced 
fiber ring of a monomial ideal. 
We generalize, in a sense, work of Singla 
\cite{Singla}, who treated the case of monomial ideals that do not have a 
proper monomial reduction and described the reduced fiber ring and its
minimal primes in terms of maximal compact faces of the Newton polyhedron. 
We consider monomial ideals in general, using the maximal dimensional faces of the Newton polyhedron
instead. Thus, we reduce the problem to studying the special fiber rings
of simpler monomial subideals that are determined by the maximal dimensional faces.

\begin{Notation}\label{settingSection7}{\rm Throughout this section, $R=k[x_1, \dots, x_n]$ will be a polynomial ring with maximal homogeneous ideal  $\m$ and $I\subset R$ will be  a monomial ideal.
We do not assume that $I$ is zero-dimensional.
Let $v_1,\ldots,v_s$ be the Rees valuations of $I$ and set $d_i=v_i(I).$ For each $i$ we consider the monomial ideals
$$
I_i = \left( \alpha \in I  \text{ monomial } |  \, v_i(\alpha) = d_i \right).
$$

Let $\mathcal{G}=\{ \alpha_j\}$ be the  set of minimal monomial generators of $I.$ 
Write $\mathcal{G}_i=\{\alpha_j \in \mathcal{G} \ | \ v_i(\alpha_j)=d_i\}$ and notice that $G_i$ is the  set of minimal monomial generators of $I_i.$ 
Consider also the set of variables $Y=\{y_j \ | \ \alpha_j\in \mathcal{G}\} $ and  subsets $Y_i=\{ y_j \ | \ \alpha_j \in \mathcal{G}_i\}.$ 

The inclusions $I_i \subset I$ induce homomorphisms $\varphi_i: \cF(I_i) \lto \cF(I).$
We also set $
J = \sum_{i=1}^s I_i.
$

}
\end{Notation}

\begin{Proposition}\label{Prop7.2}
In the setting of Notation ~$\ref{settingSection7}$, we have 
\[
\overline{I} = \overline{J} = \sum_{i=1}^s \overline{I_i} \, .
\]
In particular, $I$ is integral over $J.$
\end{Proposition}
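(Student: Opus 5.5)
We will prove the chain $\overline{J}\subset\overline{I}$, $\sum_i\overline{I_i}\subset\overline{J}$, and $\overline{I}\subset\sum_i\overline{I_i}$. Together these give equality throughout.

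The first two inclusions are formal. Since $I_i\subset I$ for every $i$, we have $J\subset I$, and hence $\overline{J}\subset\overline{I}$. Likewise $I_i\subset J$ gives $\overline{I_i}\subset\overline{J}$ for each $i$, and $\overline{J}$ is an ideal, so $\sum_i\overline{I_i}\subset\overline{J}$. Once the chain closes, $\overline I=\overline J$, which says exactly that $I$ is integral over $J$.

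The substantive inclusion is $\overline{I}\subset\sum_i\overline{I_i}$. All ideals involved are monomial, so their integral closures are monomial and it suffices to treat a monomial $x^{\mathbf a}\in\overline I$, i.e.\ a lattice point $\mathbf a\in{\rm NP}(I)$. We work with the Newton polyhedra. For each $i$, the exponents of $\mathcal G_i$ are precisely the minimal generators lying on the supporting hyperplane $H_i$ of the facet $\mathcal F_i$ of ${\rm NP}(I)$ corresponding to $v_i$. That facet is the convex hull of those exponents plus its recession cone, so
\[
\mathcal F_i+\mathbb R^n_{\ge0}\subset {\rm NP}(I_i)=\mathrm{conv}(\log\mathcal G_i)+\mathbb R_{\ge0}^n .
\]
It therefore suffices to show that ${\rm NP}(I)=\bigcup_i(\mathcal F_i+\mathbb R^n_{\ge0})$. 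Then any lattice point $\mathbf a\in{\rm NP}(I)$ lies in some ${\rm NP}(I_i)$, so $x^{\mathbf a}\in\overline{I_i}$.

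To prove this covering, take $\mathbf a\in{\rm NP}(I)$ and move from it in the direction $-\mathbf 1=(-1,\ldots,-1)$. The ray $\mathbf a-t\mathbf 1$, $t\ge 0$, eventually leaves ${\rm NP}(I)$: for large $t$ some coordinate is negative, while ${\rm NP}(I)\subset\mathbb R^n_{\ge0}$. Let $t_0$ be maximal with $\mathbf b=\mathbf a-t_0\mathbf 1\in{\rm NP}(I)$. Then $\mathbf b$ lies on the boundary, and $\mathbf a\in\mathbf b+\mathbb R^n_{\ge0}$.

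The main point is that $\mathbf b$ lies on a non-coordinate facet. Suppose instead that every facet of ${\rm NP}(I)$ containing $\mathbf b$ is a coordinate facet $\{y_j=0\}$. Consider the inequalities cutting out ${\rm NP}(I)$ near $\mathbf b$.
\begin{itemize}
\item The non-coordinate inequalities are strict at $\mathbf b$, and by continuity they stay strict at $\mathbf b-\varepsilon\mathbf 1$ for small $\varepsilon>0$.
\item Each coordinate inequality active at $\mathbf b$ has $b_j=0$, so $b_j-\varepsilon<0$ and the point really does leave the polyhedron.
\end{itemize}
This does not yet yield a contradiction, so a finer argument is needed: decrease only the coordinates $j$ with $b_j>0$. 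Such a set of coordinates exists, since otherwise $\mathbf b=\mathbf 0\in{\rm NP}(I)$, forcing $I=R$, a trivial case. Set $\mathbf e_S=\sum_{j\in S}\mathbf e_j$ with $S=\{j: b_j>0\}$, and replace $\mathbf b$ by $\mathbf b-\varepsilon\mathbf e_S$. This point keeps nonnegative coordinates and still satisfies every inequality, contradicting the choice of $\mathbf b$ as the last point. To make this rigorous, iterate the procedure: repeatedly push in the direction $-\mathbf e_S$ with $S$ the support. The total coordinate sum strictly decreases at each step, and the process terminates at a point $\mathbf b$ lying on some non-coordinate supporting hyperplane.

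It remains to identify that hyperplane with a Rees valuation facet. Every non-coordinate supporting hyperplane through a boundary point contains a non-coordinate facet through that point, because the point lies in some facet, and a coordinate facet alone was ruled out. That facet is some $\mathcal F_i$, and then $\mathbf a\in\mathcal F_i+\mathbb R^n_{\ge0}$ as required. I expect the main obstacle to be this careful polyhedral covering argument: ensuring $\mathbf b$ lies on a non-coordinate facet, rather than merely on a lower-dimensional face that meets only coordinate facets.
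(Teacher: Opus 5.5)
Your overall strategy matches the paper's. The two formal inclusions are fine. It suffices to show that every lattice point $\mathbf a\in{\rm NP}(I)$ lies in some ${\rm NP}(I_i)$, and you correctly treat lattice points of ${\rm NP}(I)$, i.e.\ monomials of $\overline I$, which is what $\overline I\subset\sum_i\overline{I_i}$ needs. The inclusion $\mathcal F_i+\mathbb R^n_{\ge0}\subset{\rm NP}(I_i)$ is also correct.

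\textbf{The gap is the covering step}, which is where you expected trouble.
\begin{enumerate}
\item Walking along $-\mathbf 1$ can leave ${\rm NP}(I)$ through a coordinate hyperplane. For $I=(x_1^2,x_2)$ and $\mathbf a=(5,0)$ the ray exits at once, so $\mathbf b=\mathbf a$. This point lies only on the coordinate facet $y_2=0$, so your ``main point'' is false for the $\mathbf b$ you constructed.
\item The ``contradiction'' is not one. The point $\mathbf b-\varepsilon\mathbf e_S$ is not on the ray, so it says nothing about the maximality of $t_0$.
\item The iteration is the right repair, but ``the coordinate sum strictly decreases'' does not give termination, since a decreasing real sequence need not stop. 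The correct argument is this. Push maximally along $-\mathbf e_S$, with $S$ the support of the current point. If at the new point every non-coordinate facet inequality were strict and every coordinate in $S$ still positive, a further small push would stay in ${\rm NP}(I)$, contradicting maximality. So either a non-coordinate facet inequality becomes tight, or the support strictly shrinks. Since $\mathbf 0\notin{\rm NP}(I)$, you reach some $H_i$ after at most $n$ steps.
\item Your closing claim is false as stated: a non-coordinate supporting hyperplane through a boundary point need not contain a facet. For $I=(x_1^2,x_1x_2,x_2^2)$ the line $y_1+2y_2=2$ meets ${\rm NP}(I)$ only at $(2,0)$. You do not need the claim. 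Write ${\rm NP}(I)=\mathbb R^n_{\ge0}\cap\bigcap_i\{\mathbf y:\ \mathbf c_i\cdot\mathbf y\ge M_i\}$, where $\mathbf c_i\cdot\mathbf y=M_i$ are the hyperplanes $H_i$ of the Rees valuations. Then tightness of the $i$-th inequality at $\mathbf b$ means exactly $\mathbf b\in H_i\cap{\rm NP}(I)=\mathcal F_i$.
\end{enumerate}

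\textbf{The paper avoids all of this by moving toward the origin} instead of along $-\mathbf 1$.
\begin{itemize}
\item Take the least $\lambda\in(0,1]$ with $\lambda\mathbf a\in{\rm NP}(I)$.
\item Since $\mu\mathbf a\ge 0$ for every $\mu\ge 0$, the coordinate inequalities never obstruct. So minimality forces $\lambda\mathbf a\in H_i$ for some $i$.
\item Write $\lambda\mathbf a=\mathbf u+\mathbf w$, with $\mathbf u$ a convex combination of exponent vectors of elements of $\mathcal G$ and $\mathbf w\in\mathbb R^n_{\ge0}$. Each exponent vector used lies on or above $H_i$, and $\mathbf c_i\cdot\mathbf w\ge 0$. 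Equality on $H_i$ therefore forces every exponent vector used to lie on $H_i$, i.e.\ to come from $\mathcal G_i$.
\item Hence $\lambda\mathbf a\in{\rm NP}(I_i)$, and so $\mathbf a=\lambda\mathbf a+(1-\lambda)\mathbf a\in{\rm NP}(I_i)$.
\end{itemize}
This one-step radial argument replaces both your iteration and your Minkowski--Weyl description of $\mathcal F_i$.
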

\begin{proof}
Let $H_i$ be the defining hyperplane of the Newton polyhedron ${\rm NP}(I)$ that corresponds to the Rees valuation $v_i.$ 
Let  $E$ and $E_i$ be the set of exponent vectors of the monomials in $\mathcal{G}$ and $\mathcal{G}_i,$ respectively, and notice that $E_i=E \cap H_i.$ 
It suffices to prove that any monomial $\alpha \in I$ is integral over $I_i$ for some $i,$ equivalently that the exponent vector $a$ of $\alpha$ is in ${\rm NP}(I_i)$ for some $i.$ 
There exists a smallest real number $\lambda$ with $0\le \lambda \le 1$ so that $\lambda a \in {\rm NP}(I),$ equivalently, $\lambda a$ is on or above each of the hyperplanes $H_i. $ 
The minimality of $\lambda$ implies that $\lambda a\in H_i$ for some $i.$ We will prove that $\lambda a\in {\rm NP}(I_i).$ 
By the definition of ${\rm NP}(I)$ one has  $\lambda a=u+v,$ where $u$ is a convex linear combination of elements $b_j \in E$ and $v\in \bR^n_{\ge 0}.$ 
Since $\lambda a\in H_i$ and each $b_j$ is on or above $H_i,$ it follows that each $b_j$ is in $H_i$ and therefore $b_j \in E\cap H_i=E_i.$ Thus $\lambda a\in {\rm NP}(I_i)$ and so $a\in {\rm NP}(I_i)$ as asserted. 
\end{proof}

\begin{Lemma}\label{pointsinInterior} We use Notation  ~\ref{settingSection7}. If  $\alpha$ is a monomial 
in $I\setminus J,$ then the image $\alpha + \m I$ of $\alpha$ in $[\cF(I)]_1$  
is nilpotent in $\cF(I).$
\end{Lemma}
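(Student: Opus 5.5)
The plan is to mimic the proof of Proposition~\ref{PropReducedFiberHomogeneousIdeal}: produce an equation of integral dependence of $\alpha$ over the ideal $J$, and show that every non-leading coefficient lies in $\m J^i\subset \m I^i$. This gives $\alpha^N\in\m I^N$ for some $N$, i.e.\ the class of $\alpha$ is nilpotent in $\cF(I)$. Since we work with monomials, I would avoid general integral equations and argue directly with exponents and the Rees valuations $v_1,\dots,v_s$.

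\textbf{Step 1: a strict inequality at every Rees valuation.} Let $\alpha\in I\setminus J$ be a monomial. Then $v_i(\alpha)\ge d_i$ for all $i$. If $v_i(\alpha)=d_i$ for some $i$, then $\alpha\in I_i\subset J$ by the definition of $I_i$, which is impossible. Hence $v_i(\alpha)>d_i$ for every $i$, i.e.\ the exponent vector $a$ of $\alpha$ lies strictly above every non-coordinate supporting hyperplane $H_i$ of ${\rm NP}(I)$.

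\textbf{Step 2: a power of $\alpha$ lies in $\m\overline{I^N}$.} I claim that $a=b+c$, where $b\in{\rm NP}(I)$ is rational and $c\in\bQ^n_{\ge0}$ is nonzero. To see this, move from $a$ in a small negative coordinate direction $-\epsilon\mathbf e_j$, choosing $j$ with $a_j>0$ (such $j$ exists since $\alpha\neq 1$; if $\alpha=1$ then $I=R$, a trivial case). The point $a-\epsilon\mathbf e_j$ still satisfies all facet inequalities of ${\rm NP}(I)$. The non-coordinate inequalities are strict at $a$, so they survive a small perturbation. The coordinate inequalities $y_h\ge0$ survive for $h\ne j$ because those coordinates are unchanged, and for $h=j$ because $a_j>0$.

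Thus $b=a-\epsilon\mathbf e_j\in{\rm NP}(I)$. Clearing denominators, choose $N$ with $N\epsilon\in\bZ_{>0}$. Then $Nb$ is a lattice point of $N\,{\rm NP}(I)={\rm NP}(I^N)$, so $\alpha^N=x^{Nb}\cdot x_j^{N\epsilon}\in \overline{I^N}\,\m$.

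\textbf{Step 3: from $\m\overline{I^N}$ to $\m I^{N'}$.} By Proposition~\ref{Prop7.2}, or simply by Noetherianity of $\overline{\cR}$ over $\cR$, there is a fixed $e$ with $\overline{I^{m+e}}\subset I^m$ for all $m$. For instance, take $e$ such that $\overline{\cR}$ is generated as an $\cR$-module in degrees $\le e$. Raising to a further power $r$ gives
\[
\alpha^{Nr}\in \m^r\,\overline{I^{Nr}}\subset \m\, I^{Nr-e}.
\]
This exponent is short by $e$, so I would strengthen Step 2 instead. Replacing $\epsilon$ by a slightly larger proportional shift along the segment toward $b$ is not possible, but taking powers works: $\alpha^{Nr}=x^{Nrb}x_j^{Nr\epsilon}$, and once $Nr\epsilon\ge$ something large, I would absorb extra factors. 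Concretely, $x_j^{Nr\epsilon}$ times $\overline{I^{Nr}}$ is not obviously in $I^{Nr+?}$. Better: use the strict inequality to push $b$ up to $b'=a-\epsilon\mathbf e_j$ with $v_i(b')\ge d_i(1+\delta)$ for some $\delta>0$. Then $\alpha^N\in x_j^{N\epsilon}\,\overline{I^{\lceil N(1+\delta)\rceil}}$ after adjusting, and $\overline{I^{N(1+\delta)}}\subset I^{N(1+\delta)-e}\subset I^N$ once $N\delta\ge e$. This gives $\alpha^N\in \m I^N$, as required. Choosing $\delta$ uniformly, via the finitely many strict inequalities, and making $N$ large and divisible enough is the main technical point.
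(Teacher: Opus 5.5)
Your argument is correct in its final form (the last version of Step~3), but it takes a different route from the paper.

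\textbf{The paper's route.} The paper never uses all the strict inequalities at once. It invokes Proposition~\ref{Prop7.2} to find a single $i$ with $\alpha\in\overline{I_i}$, so $\alpha^t\in I_i^t$ for some $t>0$. Since $I_i^t$ is generated by monomials of $v_i$-value exactly $td_i$ while $v_i(\alpha^t)>td_i$, the cofactor is a nonconstant monomial, and hence $\alpha^t\in\m I_i^t\subset\m I^t$. This is the mechanism of Proposition~\ref{PropReducedFiberHomogeneousIdeal}, and it needs no finiteness input.

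\textbf{Your route.} You use the strict inequalities at all Rees valuations to place the exponent vector in a dilate $(1+\delta)\,{\rm NP}(I)$. This relies on ${\rm NP}(I)=\{y\in\mathbb R^n_{\ge 0} : v_i(y)\ge d_i \text{ for all } i\}$. You then need a uniform bound $\overline{I^{m}}\subset I^{m-e}$. That bound comes from module-finiteness of $\overline{\cR}$ over $\cR$ (for monomial ideals $e=n-1$ works, cf.\ Discussion~\ref{DiscussionBiratVerInd}), not from Proposition~\ref{Prop7.2}, so cite it accordingly.

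\textbf{What each approach buys.} The paper's argument is self-contained given Proposition~\ref{Prop7.2} and uses only one valuation at a time. Yours avoids Proposition~\ref{Prop7.2} entirely. Phrased with valuations rather than Newton polyhedra, it is the general criterion that $v(\alpha)>v(I)$ for every Rees valuation $v$ forces $\alpha$ to be nilpotent in the fiber ring, and it works beyond monomial ideals in analytically unramified rings.

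\textbf{How to tighten it.} The perturbation by $\epsilon\mathbf e_j$ is unnecessary. Choose $\delta\in\mathbb Q_{>0}$ with $v_i(\alpha)\ge(1+\delta)d_i$ for all $i$, and $N$ with $N\delta\in\bZ$ and $N\delta\ge e+1$. Then directly $\alpha^N\in\overline{I^{N+N\delta}}\subset I^{N+1}\subset\m I^N$. In a write-up, delete the abandoned attempts in Step~3, and replace $\lceil N(1+\delta)\rceil$ by the integer $N(1+\delta)$ that this choice of $N$ guarantees.
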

\begin{proof}
Since $\alpha \notin J$, we have $v_i(\alpha) > d_i $ for all $i.$  On the other hand,    $\overline{I}=\sum\overline{I_i}$ by Proposition \ref{Prop7.2}, hence, $\alpha \in \overline{I_i}$ for some $i$ because $\alpha$ and all the ideals $\overline{I_i}$ are monomial. 
Thus,  $\alpha^t \in I_i^t$ for some $t>0,$ again because $\alpha$ and $I_i$ are monomial.
From $v_i(\alpha)> d_i$ we obtain $v_i(\alpha^t) > td_i$ and hence $\alpha^t \in \m I_i^t\subset \m I^t$.
We conclude that $\alpha+\m I$ is nilpotent in $\cF(I),$ as claimed.
\end{proof}

\begin{Theorem}\label{Th7.1}
We use Notation  ~$\ref{settingSection7}$.
The inclusion of ideals $J\subset I$ induces an isomorphism
$$
\frac{\cF(J)}{\sqrt{0}} \cong  \frac{\cF(I)}{\sqrt{0}}. 
$$
\end{Theorem}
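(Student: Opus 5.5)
The natural map is $\psi\colon \cF(J)=\bigoplus_j J^j/\m J^j \lto \cF(I)=\bigoplus_j I^j/\m I^j$, induced by $J^j\subset I^j$ and $\m J^j\subset \m I^j$. It descends to a map $\overline\psi\colon \cF(J)/\sqrt{0}\to\cF(I)/\sqrt{0}$, and I will show that $\overline\psi$ is surjective and injective.

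Surjectivity should follow the pattern of the proof of Proposition~\ref{PropReducedFiberHomogeneousIdeal}. The ring $\cF(I)$ is generated as a $k$-algebra by its degree one component $I/\m I$, which is spanned by the images of the minimal monomial generators $\alpha\in\mathcal G$. If $\alpha\in J$, its image is $\psi(\alpha+\m J)$. If $\alpha\notin J$, its image is nilpotent by Lemma~\ref{pointsinInterior}. Hence $\cF(I)/\sqrt{0}$ is generated by images of elements of $\psi(\cF(J))$, so $\overline\psi$ is surjective.

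Injectivity is the main obstacle. We must show $\psi^{-1}(\sqrt{0})=\sqrt{0}$. Since everything is $\bZ^n$-multigraded by monomials, and the nilradical of a multigraded ring is multigraded, I plan to reduce to a monomial $g\in J^j\setminus \m J^j$ whose image in $\cF(I)$ is nilpotent, i.e. $g^b\in \m I^{bj}$ for some $b$. I will then aim to show $g^{b'}\in\m J^{b'j}$ for some $b'$; it might simply be $b'=b$.

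For this I will use valuations. Since $g\in J^j$ is a monomial, it is a product of $j$ monomials from the various $I_i$. The hope is to show that a monomial $h\in J^{m}$ lies in $\m I^m$ if and only if it lies in $\m J^m$. If $h\in\m I^m$, then $h=x\cdot\beta_1\cdots\beta_m$ with $\beta_r\in\mathcal G$ and $x$ a monomial, possibly after absorbing factors. If some $\beta_r\notin J$, I will replace it: by Proposition~\ref{Prop7.2} and the proof of Lemma~\ref{pointsinInterior}, $\beta_r$ lies in $\overline{I_i}$ with $v_i(\beta_r)>d_i$. However, $\beta_r$ need not be divisible by an element of $I_i$, so a direct factor swap may fail. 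A cleaner route is to use Rees valuations: $h\in\m J^m$ should be detected by some strict inequality. Membership in $\m J^m$ is not purely valuative, so instead I would show that the nilpotence of $g$ in $\cF(I)$ forces, for large powers, $g^b\in\overline{\m I^{bj}}$. I would then try to establish the key inequality that $v_i(g)>j\,d_i$ for every $i$ with $g$ built from $I_i$-factors.

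Conversely, if $v_i(g)=jd_i$ for some $i$, then $g\in I_i^j$ forces all of its factors into $I_i$, and I can pass to the face of $\mathrm{NP}(I)$ on $H_i$. There the monomials of $I^{bj}$ of $v_i$-value exactly $bjd_i$ are products of generators in $\mathcal G_i$, so $g^b\in\m I^{bj}$ would force $g^b\in \m I_i^{bj}\subset\m J^{bj}$. This is a degree argument in the $v_i$-grading, as in Proposition~\ref{PropReducedFiberHomogeneousIdeal}. In the remaining case, where all $v_i(g)>jd_i$, I would show directly that $g$ is nilpotent in $\cF(J)$ via integrality over $I_i$, mirroring Lemma~\ref{pointsinInterior} for $J$. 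Making this case split rigorous is the step I expect to require the most care.
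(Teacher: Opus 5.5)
Your argument is correct. The surjectivity half is the paper's argument, but your injectivity argument takes a genuinely different route.

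\textbf{The paper's injectivity argument.} It never looks at individual monomials. Since $I$ is integral over $J$ (Proposition~\ref{Prop7.2}), the extension $\cR(J)\subset\cR(I)$ is integral. By lying over, every prime of $\cR(J)$ containing $\m$ is contracted from a prime of $\cR(I)$, and that prime automatically contains $\m$. Hence every prime of $\cF(J)$ is contracted from a prime of $\cF(I)$, so the preimage of $\sqrt{0}$ is $\sqrt{0}$. This takes two lines and works for an arbitrary reduction $J\subset I$; monomiality is needed only for surjectivity.

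\textbf{Your injectivity argument.} You use the $\bZ^n\times\bZ$-grading to reduce to a monomial $g\in J^j\setminus\m J^j$, and then split according to whether $v_i(g)=jd_i$ for some $i$.
\begin{enumerate}
\item If $v_i(g)=jd_i$ for some $i$, write $g^b=x'\gamma_1\cdots\gamma_{bj}$ with $x'\in\m$ and $\gamma_r\in\mathcal G$. This forces $v_i(x')=0$ and every $\gamma_r\in\mathcal G_i$. Hence $g^b\in\m I_i^{bj}\subset\m J^{bj}$, with the same exponent $b$.
\item If $v_i(g)>jd_i$ for all $i$, then $g$ is nilpotent in $\cF(J)$ regardless of its image in $\cF(I)$. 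What you need here is not integrality over $I_i$ but $g\in\overline{I_i^{\,j}}$ for some $i$. That is Proposition~\ref{Prop7.2} applied to $I^j$, whose Rees valuations are the $v_i$ with $v_i(I^j)=jd_i$ and whose associated ideals are the $I_i^j$. This is exactly the step used in the proof of Lemma~\ref{intersectionNil}. Then $g^t\in I_i^{jt}$ together with $v_i(g^t)>jtd_i$ gives $g^t\in\m I_i^{jt}\subset\m J^{jt}$.
\end{enumerate}

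\textbf{Comparison.} Your version is longer and specific to monomial ideals. In exchange it is explicit: monomials lying on some face that die in $\cF(I)/\sqrt{0}$ are already nilpotent in $\cF(J)$ with the same exponent, and monomials interior to all facets are nilpotent in $\cF(J)$ outright.

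\textbf{Two intermediate sentences to discard.} Drop them rather than try to repair them.
\begin{itemize}
\item The inequality ``$v_i(g)>jd_i$ for every $i$'' is not something to establish; it is simply the hypothesis of your second case.
\item The hoped-for statement that a monomial of $J^m$ lies in $\m I^m$ if and only if it lies in $\m J^m$ is false. Take $I=(x^6,x^4y,x^2y^4,y^6)$, so that $J=(x^6,x^4y,y^6)$. Then $x^6y^6=x^6\cdot y^6\in J^2\setminus\m J^2$, yet $x^6y^6=y\cdot x^4y\cdot x^2y^4\in\m I^2$.
\end{itemize}
So the map $\cF(J)\to\cF(I)$ itself need not be injective. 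This is why your second case must produce nilpotence in $\cF(J)$ rather than membership in $\m J^j$.
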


\begin{proof}
We consider the inclusion of Rees rings $\cR(J)\subset \cR(I),$ which is an integral extension by Proposition \ref{Prop7.2}. It induces a homomorphism of graded $k$-algebras
\[\varphi :  \frac{\cF(J)}{\sqrt{0}} \lto \frac{\cF(I)}{\sqrt{0}}.
\]
This map is surjective by Lemma \ref{pointsinInterior}. 

It remains to prove that $\varphi$ is injective, equivalently,  that  the nilradical of $\cF(I)$ contracts to the nilradical of $\cF(J)$ under the natural map $\cF(J) \lto \cF(I)$. This holds if every prime ideal of $\cF(J)$ is contracted from a prime ideal of $\cF(I),$ equivalently, if every prime ideal of $\cR(J)$ containing $\m$ is contracted from a prime ideal of $\cR(I)$ containing $\m.$ The latter holds because the integral extension $\cR(J)\subset \cR(I)$ satisfies lying over.
\end{proof}

\begin{Theorem}\label{Th7.2}
In the setting of Notation ~$\ref{settingSection7}$, 
there are homomorphisms of graded $k$-algebras 
$\psi_i: \cF(I) \lto \cF(I_i)$
with  
$$
\psi_i(\alpha_j+\m I)=  \begin{cases}
\alpha_j +\m I_i & \text{ if } \alpha_j \in \mathcal{G}_i\, , \\
0  & \text{ if } \alpha_j \in \mathcal{G} \setminus \mathcal{G}_i \, .
\end{cases}
$$
One has $\psi_i \circ \varphi_i={\rm id}, $ 
in particular, $\varphi_i$ are injective and $\psi_i$ are surjective. Furthermore,
$\, \ker \psi_i=L_i:= (\alpha_j +\m I \ | \ \alpha_j \in \mathcal{G} \setminus \mathcal{G}_i) \cF(I).$
\end{Theorem}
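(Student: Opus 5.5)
Define $\psi_i$ as a multigraded map at the level of $k$-vector spaces: $\cF(I)=\bigoplus_{j\ge 0} I^j/\m I^j$ has a $k$-basis consisting of the monomials $\beta\in I^j\setminus \m I^j$, and likewise for $\cF(I_i)$. The plan is to send such a basis monomial $\beta$ of degree $j$ to $\beta+\m I_i^j$ when $v_i(\beta)=jd_i$, and to $0$ when $v_i(\beta)>jd_i$. Two facts make this well defined. First, a monomial $\beta\in I^j$ with $v_i(\beta)=jd_i$ already lies in $I_i^j$: write $\beta=\gamma\alpha_{j_1}\cdots\alpha_{j_j}$ with $\alpha_{j_k}\in\mathcal G$; then $v_i(\beta)\ge \sum v_i(\alpha_{j_k})\ge jd_i$, with equality forcing $v_i(\gamma)=0$ (so $\gamma=1$, provided every variable has positive $v_i$-value, and in any case $\gamma\alpha_{j_1}\in I_i$ since $v_i(\gamma\alpha_{j_1})=d_i$ and the product stays a monomial of $I$ of minimal value) and every $\alpha_{j_k}\in\mathcal G_i$. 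Second, $\m I^j$ is spanned by monomials, and the rule respects the subspaces: if $\beta\in\m I^j$ has $v_i(\beta)=jd_i$, then $\beta\in \m I_i^j$ by the same argument applied to $\beta=x\beta'$ with $\beta'\in I^j$. Hence $\psi_i$ is a well-defined graded $k$-linear map.

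Next, I check multiplicativity on monomial basis elements $\beta\in I^j$, $\beta'\in I^{j'}$. Since $v_i(\beta\beta')=v_i(\beta)+v_i(\beta')$ and each term is at least $jd_i$, resp.\ $j'd_i$, the product has minimal value $(j+j')d_i$ exactly when both factors do. So either both sides are $0$, or both are the class of $\beta\beta'$ in $\cF(I_i)$. This also yields the stated formula on generators: $\psi_i(\alpha_j+\m I)$ is $\alpha_j+\m I_i$ or $0$ according as $\alpha_j\in\mathcal G_i$.

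The composite $\psi_i\circ\varphi_i$ fixes each generator $\alpha_j+\m I_i$ with $\alpha_j\in\mathcal G_i$, so it is the identity because $\cF(I_i)$ is generated by these elements. Hence $\varphi_i$ is injective and $\psi_i$ is surjective.

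For the kernel, clearly $L_i\subset\ker\psi_i$. For the reverse inclusion, the step I expect to need most care is that $\ker\psi_i$ is a multigraded subspace spanned by the classes of monomials $\beta\in I^j\setminus\m I^j$ with $v_i(\beta)>jd_i$. The map is diagonal on the monomial basis with zero or nonzero images, and the nonzero images are distinct monomial classes, hence linearly independent. Any such $\beta$ can be written as $\gamma\alpha_{j_1}\cdots\alpha_{j_j}$, and since $\beta\notin\m I^j$ we must have $\gamma=1$. Strict inequality then forces some $\alpha_{j_k}\notin\mathcal G_i$, so the class of $\beta$ equals the product of the classes $\alpha_{j_k}+\m I$ and lies in $L_i$.
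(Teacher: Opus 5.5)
Your proof is correct, but it takes a different route from the paper's.

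The paper works with presentations. It maps $R[Y]\to\cR(I)$ by $y_j\mapsto\alpha_j t$ and uses that the kernel is a toric ideal generated by binomials $aM-bN$, reducing to those with $a=1$ or $b=1$. It then checks that each such relation survives the retraction $k[Y]\to k[Y_i]$ that kills the $y_j$ with $\alpha_j\notin\mathcal G_i$. The key case is when $M$ involves only $Y_i$ and $N$ does not: then $v_i$ forces $a\in\m$. The kernel is obtained formally: $\overline{\varphi_i}\colon\cF(I_i)\to\cF(I)/L_i$ is surjective by the definition of $L_i$ and split by $\overline{\psi_i}$, hence bijective.

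You instead use the monomial $k$-basis of $\cF(I)$, namely the monomials in $I^j\setminus\m I^j$. You define $\psi_i$ by keeping the basis monomials with $v_i(\beta)=jd_i$ and killing the rest, and additivity of $v_i$ gives multiplicativity. In effect you show that the weight $v_i(\beta)-jd_i$ gives a nonnegative grading of $\cF(I)$. Its degree-zero part is $\varphi_i(\cF(I_i))$ and its positive part is $L_i$, so $\psi_i$ is just the projection onto degree zero.

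Your approach needs no knowledge of the toric relations and makes the structure visible. The paper's approach fits the later use of $k[Y]$, $K$, $K_i$, and it gets the kernel without basis bookkeeping. You could also have used the paper's split-surjection argument for the kernel once $\psi_i$ exists.

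Two small points:
\begin{itemize}
\item Your parenthetical about $\gamma=1$ is unnecessary. One has $\gamma\prod_k\alpha_{j_k}\in I_i^j$ simply because $I_i^j$ is an ideal.
\item In the kernel step, say explicitly why a basis monomial with $v_i(\beta)=jd_i$ maps to a nonzero basis vector of $\cF(I_i)$: the reason is $\beta\notin\m I^j\supseteq\m I_i^j$. Without this, the kernel could contain such $\beta$, and your argument would not place them in $L_i$.
\end{itemize}
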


\begin{proof}
Consider the homomorphism of graded $R$-algebras $R[Y] \twoheadrightarrow \mathcal R(I)=R[It]$ with $y_j \mapsto \alpha_jt.$
Since $\cR(I)$ is a monomial algebra, the kernel of this map is a toric ideal, generated by binomials of the form $aM-bN$, where $M,N$ are monomials in $Y$ of the same degree, and $a,b$ are monomials in $x_1, \ldots, x_n$.
The kernel of the induced map $k[Y] \twoheadrightarrow \cF(I)$
is generated by the images of these binomials, and it suffices to consider  binomials with $a= 1$ or $b=1$.
We claim that each of these binomials remains a relation in $\cF(I_i)$.
This shows that the retraction $k[Y] \twoheadrightarrow k[Y_i]$
 defined by
$$
y_j \mapsto \begin{cases}
y_j & \text{ if } \alpha_j \in \mathcal{G}_i \,, \\
0  & \text{ if } \alpha_j \in \mathcal{G} \setminus \mathcal{G}_i \,,
\end{cases}
$$
induces a map on the level of special fiber rings, as desired.

If there exist  $j_1,j_2$ such that $y_{j_1}$ divides $M$, $y_{j_2}$ divides $N$,
and both $\alpha_{j_1},\alpha_{j_2}$ do not belong to $\mathcal{G}_i$, then the binomial is mapped to $0$ in $\cF(I_i).$
If $M$ and $N$ only involve variables $y_j$ such that $\alpha_j \in \mathcal{G}_i$, then the binomial is still a relation in $\cF(I_i)$.
Thus, we may assume that $M$ only involves variables $y_j$ such that $\alpha_j \in \mathcal{G}_i$,
while 
$N$ is divisible by a variable $y_h$ such that $\alpha_h \in \mathcal{G} \setminus \mathcal{G}_i$.
As $\alpha_h \in \mathcal{G}\setminus \mathcal{G}_i$, we have $v_i(\alpha_h)>d_i.$
Since $M,N$ have the same degree,
it follows that $v_i(M(\mathcal{G})) < v_i(N(\mathcal{G}))$.
On the other hand, $aM(\mathcal{G})= bN(\mathcal{G}) $ in $R$,
hence, $v_i(a)>0,$ which implies $a \in \m$.
Thus, the binomial $aM-bN$ is mapped to zero in $\cF(I_i)$.

Now that we have proved that the maps $\psi_i$ are well defined, it follows 
immediately that $\psi_i \circ \varphi_i={\rm id}, $ as claimed. 
Clearly $L_i \subset \ker \psi_i.$ 
Hence, there are induced maps
\[
\cF(I_i) \xrightarrow{\overline{\varphi_i}} 
\cF(I)/L_i \xrightarrow{\overline{\psi_i}} 
\cF(I_i)
\]
with $\overline{\psi_i} \circ \overline{\varphi_i} = {\rm id}$. Notice that $\overline{\varphi_i}$ is injective. But 
$\overline{\varphi_i}$ is also surjective by the definition of $L_i,$ so this map is bijective, showing that $\overline{\psi_i}$
is bijective as well. This proves the equality $\ker \psi_i = L_i.$
\end{proof}

\begin{Lemma}\label{intersectionNil} With notation as in Theorem $\ref{Th7.2}$, the ideal $\, \bigcap\limits_i L_i \subset \cF(I)$ is nilpotent. 
\end{Lemma}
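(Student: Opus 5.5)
Since $\cF(I)$ is a Noetherian ring, a nil ideal is nilpotent. It therefore suffices to show that $\bigcap_i L_i$ is contained in every minimal prime of $\cF(I)$, i.e.\ that $\bigcap_i L_i \subset \sqrt{0}$. Equivalently, for each minimal prime $\mathfrak p$ of $\cF(I)$ we need some $i$ with $L_i \subset \mathfrak p$; then $\bigcap_i L_i \subset L_i \subset \mathfrak p$.

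By Theorem~\ref{Th7.1}, $\cF(I)/\sqrt{0} \cong \cF(J)/\sqrt{0}$ with $J=\sum I_i$, so the minimal primes of $\cF(I)$ are controlled by $J$. I would work with the multigrading. $\cF(I)$ is a quotient of $k[Y]$ by a binomial ideal, namely the image of the toric relations. Hence its minimal primes are $\bZ^n$-graded, i.e.\ monomial-type in the sense that they are generated by binomials and the variables $y_j$ they contain. For each minimal prime $\mathfrak p$, consider the set $S=\{j \mid y_j \notin \mathfrak p\}$. The key claim is that the exponent vectors $E_S$ of $\{\alpha_j : j\in S\}$ all lie on a common hyperplane $H_i$, so that $S \subset \{j : \alpha_j \in \mathcal G_i\}$. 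Given that claim, each generator of $L_i$, namely $\alpha_j+\m I$ with $\alpha_j\notin\mathcal G_i$, has $j\notin S$, so $y_j\in\mathfrak p$ and hence $L_i\subset \mathfrak p$.

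To prove the claim, use that $\mathfrak p$ minimal gives $\dim \cF(I)/\mathfrak p = \dim \cF(I)$, or at least that $\mathfrak p$ is a prime containing the nilradical. Suppose no single $H_i$ contains all of $E_S$. Then I would produce a nilpotent element in $\cF(I)/\mathfrak p$ that is a product of the $\alpha_j t$, $j\in S$. Take $\beta=\prod_{j\in S}\alpha_j$, or a suitable positive combination whose exponent vector is interior to $\mathrm{conv}(E_S)$. Its exponent vector, divided by $|S|$, lies in $\mathrm{NP}(I)$ but on no supporting hyperplane $H_i$ of a Rees valuation. Otherwise, by the convexity argument from the proof of Proposition~\ref{Prop7.2}, all $b_j$ would lie on that $H_i$. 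Hence $v_i(\beta) > |S| d_i$ for all $i$, and the argument of Lemma~\ref{pointsinInterior} shows that $\beta^t \in \m I^{|S|t}$ for some $t$. Thus $\prod_{j\in S}(\alpha_j t)$ is nilpotent in $\cF(I)$, so it lies in $\mathfrak p$. Since $\mathfrak p$ is prime, some $y_j$ with $j\in S$ lies in $\mathfrak p$, which is a contradiction.

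The main obstacle is the step ``$\mathrm{conv}(E_S)$ not contained in a single facet hyperplane $H_i$ implies the barycenter lies strictly above every $H_i$.'' This is exactly where it matters that the $H_i$ are supporting hyperplanes: a point of $\mathrm{NP}(I)$ that is a convex combination of points on or above $H_i$ lies on $H_i$ only if all of them do. This needs care when $S$ includes generators with non-compact face directions. I expect the argument of Proposition~\ref{Prop7.2}, applied to $\lambda a$ with $\lambda\le 1$, to handle it: if $\lambda<1$ we are already strictly above all $H_i$ after scaling, and if $\lambda=1$ some $H_i$ contains all of $E_S$.
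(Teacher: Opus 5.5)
Your argument is correct and uses the same mechanism as the paper. A product of generators $\alpha_j+\m I$ that contains, for every $i$, a factor with $v_i(\alpha_j)>d_i$ has $v_i$-value strictly larger than (number of factors)$\cdot d_i$ for all $i$, and is therefore nilpotent by Lemma~\ref{pointsinInterior} applied to a power of $I$. The paper packages this more directly through $(\bigcap_i L_i)^s\subset\prod_i L_i$, whose generators are exactly such products, so it needs neither primes, Theorem~\ref{Th7.1}, nor the multigrading. Your ``main obstacle'' also disappears: $v_i(\alpha_j)\ge d_i$ for every $\alpha_j\in\mathcal G$, so a single strict inequality already gives $v_i(\beta)>|S|\,d_i$, with no convexity or $\lambda$-scaling argument needed.
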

\begin{proof} It suffices to prove that $\prod\limits_{j=1}^s L_j$ is nilpotent in $\cF(I).$ This ideal is generated by elements of the form 
\[(\alpha_1+\m I)\cdot \ldots \cdot (\alpha_s +\m I) = \prod\limits_{j=1}^{s}\alpha_j +\m I^s
\]
with $\alpha_j\in \mathcal{G} \setminus \mathcal{G}_j.$
Since $v_i(\alpha_i)>d_i,$ it follows that 
$\, v_i(\prod\limits_{j=1}^{s}\alpha_j)> s \, d_i=v_i(I^s)$
for every $i.$ 
So, applying Lemma~\ref{pointsinInterior} to the ideal $I^s$, we see that $\prod\limits_{j=1}^{s}\alpha_j +\m I^s$ is nilpotent in $\cF(I^s),$ hence in $\cF(I).$
\end{proof}

\begin{Notation}\label{kernels}{\rm Adopt the notation of Notation  ~\ref{settingSection7} and of Theorem \ref{Th7.2}. 
Let 
$$\pi: k[Y]\lto \mathcal F(I)/\sqrt{0}$$
be the homogeneous epimorphism with $\pi(y_j)=\alpha_j +\m I,$ and let
$$\pi_i: k[Y_i]\lto \mathcal F(I_i)/\sqrt{0}$$
be the homogeneous epimorphism with $\pi(y_j)=\alpha_j +\m I_i.$ Write $K=\ker \pi$ and $K_i=\ker \pi_i$ for the defining ideals of the reduced special fiber rings. 

Let $\p_{i\ell}$ be the minimal primes of $\cF(I_i)$ and let $\q_{i\ell}=\pi_i^{-1}(\p_{i\ell})$ be their preimages in $k[Y_i].$ Finally, write $P_{i\ell}=(\p_{i\ell}, L_i)\cF(I)$ and $Q_{i\ell}=\pi^{-1}(P_{i\ell})$ for their preimages in $k[Y].$
}
\end{Notation}

\begin{Theorem}\label{ThmReducedSpecialFiberIdealDescription} In the setting of  Notation $\ref{kernels}$. \begin{enumerate}[{$(a)$}]
    \item The minimal prime ideals of $\cF(I)$ are the minimal elements of the set $\{ P_{i\ell}\}.$
    \item $K=\bigcap\limits_i\ (K_i, Y\setminus Y_i) =\bigcap\limits_{i,\ell}\ (\q_{i\ell}, Y \setminus Y_i)=\bigcap\limits_{i,\ell} \, Q_{i\ell}.$
\end{enumerate}
    
\end{Theorem}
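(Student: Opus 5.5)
The plan is to reduce everything to the retractions $\psi_i$ of Theorem~\ref{Th7.2}, using Lemma~\ref{intersectionNil} to see that every prime of $\cF(I)$ passes through one of the kernels $L_i$.

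For part (a), let $\mathfrak p$ be any prime of $\cF(I)$.
\begin{enumerate}[(1)]
\item By Lemma~\ref{intersectionNil}, $\bigcap_i L_i$ is nilpotent, so $\prod_i L_i \subset \bigcap_i L_i \subset \mathfrak p$. Since $\mathfrak p$ is prime, $L_i\subset \mathfrak p$ for some $i$.
\item By Theorem~\ref{Th7.2}, $\psi_i$ induces an isomorphism $\cF(I)/L_i \cong \cF(I_i)$. Hence the primes of $\cF(I)$ containing $L_i$ are exactly the ideals $\psi_i^{-1}(\mathfrak p')$ with $\mathfrak p'\in\Spec \cF(I_i)$. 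The minimal ones among them are the $\psi_i^{-1}(\p_{i\ell})$.
\item I will check that $\psi_i^{-1}(\p_{i\ell})=(\p_{i\ell},L_i)\cF(I)=P_{i\ell}$, where $\p_{i\ell}$ is viewed inside $\cF(I)$ via the injection $\varphi_i$.
\begin{itemize}
\item The inclusion $\supseteq$ holds because $\psi_i\circ\varphi_i={\rm id}$ and $\psi_i(L_i)=0$.
\item For $\subseteq$, write any $x\in\cF(I)$ as $x=\varphi_i\psi_i(x)+(x-\varphi_i\psi_i(x))$. The second summand lies in $\ker\psi_i=L_i$.
\end{itemize}
\end{enumerate}
Thus every prime of $\cF(I)$ contains some $P_{i\ell}$, and every $P_{i\ell}$ is prime. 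The minimal primes of $\cF(I)$ are therefore precisely the minimal elements of $\{P_{i\ell}\}$.

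For part (b), the last equality comes first.
\begin{itemize}
\item Since $\sqrt 0$ is the intersection of the minimal primes, (a) gives $\sqrt0=\bigcap_{i,\ell}P_{i\ell}$; adding non-minimal members does not change the intersection.
\item Each $P_{i\ell}$ contains $\sqrt0$, so its image in $\cF(I)/\sqrt0$ is a prime ideal. Taking preimages under $\pi$ commutes with intersections, so $K=\pi^{-1}(0)=\bigcap_{i,\ell}Q_{i\ell}$.
\end{itemize}
Next I identify $Q_{i\ell}$. Consider the composite $k[Y]\to\cF(I)\xrightarrow{\psi_i}\cF(I_i)$. By the formula for $\psi_i$, it equals the retraction $\rho_i\colon k[Y]\to k[Y_i]$ (with $y_j\mapsto 0$ for $y_j\notin Y_i$) followed by the natural map $k[Y_i]\to\cF(I_i)$. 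Hence
$$Q_{i\ell}=\rho_i^{-1}(\q_{i\ell})=(\q_{i\ell},\,Y\setminus Y_i),$$
because $\ker\rho_i=(Y\setminus Y_i)$ and $\q_{i\ell}\subset k[Y_i]\subset k[Y]$. This gives the middle equality.

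For the first equality, note that the $\p_{i\ell}$ are all the minimal primes of $\cF(I_i)$. Therefore $\bigcap_\ell \q_{i\ell}=\pi_i^{-1}(0)=K_i$. Finally, $\bigcap_\ell(\q_{i\ell},Y\setminus Y_i)=\rho_i^{-1}\big(\bigcap_\ell\q_{i\ell}\big)=(K_i,Y\setminus Y_i)$, since preimages commute with intersections.

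The main obstacle is step (3), the identification $\psi_i^{-1}(\p_{i\ell})=(\p_{i\ell},L_i)\cF(I)$. It must be handled carefully through the splitting $\psi_i\circ\varphi_i={\rm id}$, and one must keep straight the nilradical versus the reduced rings when passing between $\cF(I_i)$, $\cF(I_i)/\sqrt0$, and $k[Y_i]$. The remaining steps are formal consequences of Theorem~\ref{Th7.2} and Lemma~\ref{intersectionNil}.
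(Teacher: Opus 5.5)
Your proposal is correct and takes essentially the same route as the paper. Lemma~\ref{intersectionNil} forces every (minimal) prime to contain some $L_i$, the isomorphism $\cF(I)/L_i\cong\cF(I_i)$ of Theorem~\ref{Th7.2} then identifies it with some $P_{i\ell}$, and part (b) is a formal passage to preimages in $k[Y]$; your explicit checks that $P_{i\ell}=\psi_i^{-1}(\p_{i\ell})$ is prime and that $Q_{i\ell}=\rho_i^{-1}(\q_{i\ell})$ simply spell out points the paper treats as immediate.
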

\begin{proof}
    To prove part (a), 
    it suffices to show that for every minimal prime ideal $ P$ of $\cF(I)$ we have $ P\in \{ P_{i\ell}\}.$ 
    It follows from Lemma~\ref{intersectionNil} that $\bigcap\limits_{i,\ell} L_i\subset P,$ thus, $L_i \subset P $ for some $i.$ 
    Therefore, $P/L_i$ is a minimal prime ideal of $\cF(I)/L_i$  and so, under the isomorphism of Theorem ~\ref{Th7.2}, the ideal $P/L_i$ is mapped onto a  minimal prime ideal $\p_{i\ell}$ of $\cF(I_i).$ This shows that $P=P_{i\ell}.$

    The equality $K=\bigcap\limits_{i,\ell} \, Q_{i\ell}$ of part (b) follows immediately from (a), and the equality  $\bigcap\limits_{i,\ell} \, Q_{i\ell}=\bigcap\limits_{i,\ell}\ (\q_{i\ell}, Y \setminus Y_i)$ is obvious. Finally, $K_i$ and $\q_{i\ell}$ are ideals of $k[Y_i]$ with $K_i=\bigcap\limits_{\ell} \, \q_{i\ell}.$ 
Therefore, $(K_i, Y\setminus Y_i)=\bigcap\limits_{\ell} \ (\q_{i\ell}, Y\setminus Y_i),$ which proves the second equality in part (b). 
\end{proof}

We are now going to focus on Rees valuations centered at the maximal ideal $\m,$ which was also the case considered in most of Section~\ref{SectionBehrend}. We denote these Rees valuations by $v_1, \ldots, v_t.$

\begin{Proposition}\label{Prop7.9} In the setting of  Notation $\ref{kernels}$, if the valuation $v_i$ is centered at the maximal ideal $\m,$ then the natural map $ k[\mathcal{G}_i]\twoheadrightarrow \cF(I_i)$ is an isomorphism. 
In particular, $\cF(I_i)$ is a domain, $\{\p_{i\ell} \}=\{0\},$ 
$\{ P_{i\ell}\}=\{L_i\},$ $\{ Q_{i\ell}\}=\{ (K_i, Y\setminus Y_i) k[Y]\}.$ Furthermore, $L_1, \ldots, L_t$
are distinct minimal primes of $\cF(I).$ \end{Proposition}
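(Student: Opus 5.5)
The first claim is that the natural surjection $k[\mathcal{G}_i]\twoheadrightarrow \cF(I_i)$ is injective. I would use Proposition~\ref{PropReducedFiberHomogeneousIdeal} with the grading given by $v_i$, setting $\deg x_j=v_i(x_j)$. Since $v_i$ is centered at $\m$, all these degrees are positive, so $R$ is a positively graded reduced ring. The ideal $I_i$ is homogeneous in this grading and generated in the single degree $d_i$, so its initial degree is $d_i$ and $(I_i)_{d_i}$ is spanned by $\mathcal{G}_i$. Trivially $I_i$ is integral over $(I_i)_{d_i}R=I_i$. The proposition then says that $k[(I_i)_{d_i}]\to \cF(I_i)\to \cF(I_i)/\sqrt{0}$ are isomorphisms. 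Because $k[(I_i)_{d_i}]=k[\mathcal{G}_i]$ is a domain, $\cF(I_i)\cong k[\mathcal{G}_i]$ is a reduced domain.

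One could also check this directly. A homogeneous $g\in k[\mathcal{G}_i]$ of filtration degree $e$ has $v_i$-degree $ed_i$, while $\m I_i^e$ lives in $v_i$-degree $>ed_i$. Hence $g\in \m I_i^e$ forces $g=0$.

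The listed consequences follow formally. The only minimal prime of the domain $\cF(I_i)$ is $0$, so $\{\p_{i\ell}\}=\{0\}$. By Notation~\ref{kernels}, $P_{i\ell}=(0,L_i)=L_i$. Moreover, $Q_{i\ell}=\pi^{-1}(L_i)$ is the kernel of $k[Y]\to \cF(I)/L_i\cong\cF(I_i)\cong k[\mathcal{G}_i]$ via $\psi_i$ (Theorem~\ref{Th7.2}). Since the $y_j$ with $y_j\notin Y_i$ map to $0$, this kernel is $(K_i,Y\setminus Y_i)k[Y]$, where $K_i=\ker(k[Y_i]\to k[\mathcal{G}_i])$.

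For the final assertion, each $L_i$ with $i\le t$ is prime because $\cF(I)/L_i\cong\cF(I_i)$ is a domain. It has dimension $\dim k[\mathcal{G}_i]=n$, since the log matrix of $\mathcal{G}_i$ has rank $n$ (Discussion~\ref{log2}). Also $\dim \cF(I)\le \ell(I)\le n$. So $L_i$ is a prime of maximal dimension and hence a minimal prime of $\cF(I)$.

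The main obstacle is showing that the $L_i$ are distinct for $i\le t$. Suppose $L_i=L_{i'}$. Then the $\alpha_j$ not in $\mathcal{G}_i$ and those not in $\mathcal{G}_{i'}$ generate the same prime. I would argue that $\mathcal{G}_i=\mathcal{G}_{i'}$ as follows. If $\alpha_h\in\mathcal{G}_{i'}\setminus\mathcal{G}_i$, then $\alpha_h+\m I\in L_i=L_{i'}$, so its image $\psi_{i'}(\alpha_h+\m I)=\alpha_h$ in the domain $\cF(I_{i'})\cong k[\mathcal{G}_{i'}]$ would be zero. That is impossible, since $\alpha_h\neq 0$ there. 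Thus $\mathcal{G}_i=\mathcal{G}_{i'}$. But $\mathcal{G}_i$ spans the facet $\mathcal{P}_i$ of ${\rm NP}(I)$, which is $(n-1)$-dimensional because $\dim k[\mathcal{G}_i]=n$. Equal point sets therefore determine the same supporting hyperplane, so $v_i=v_{i'}$ and $i=i'$.
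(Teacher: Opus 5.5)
Your proof is correct. For the isomorphism $k[\mathcal{G}_i]\cong\cF(I_i)$ you do exactly what the paper does: grade $R$ by $v_i$, observe that $R$ is then positively graded and $I_i$ is generated in the single degree $d_i$, and invoke Proposition~\ref{PropReducedFiberHomogeneousIdeal}. Your identification of $Q_{i\ell}$ through $\psi_i$ is the intended one.

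You diverge on the final assertion. The paper defers it to Discussion~\ref{Discussion7.10}. There, the surjection $\Psi_i\colon\cR(I)\twoheadrightarrow k[\mathcal{G}_i]$ with kernel $\fP_i$ (from Proposition~\ref{LIso}) is placed in a commutative square with $\psi_i$. This shows that $\fP_i$ is the preimage of $L_i$. Minimality and distinctness of the $L_i$ are then inherited from the fact that the $\fP_i$ are distinct minimal primes of $I\cR(I)$ containing $\m$ (Proposition~\ref{1-1}).

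You instead work entirely inside $\cF(I)$:
\begin{itemize}
\item primality comes from $\cF(I)/L_i\cong k[\mathcal{G}_i]$;
\item minimality comes from the dimension count $\dim \cF(I)/L_i=n\ge \ell(I)$;
\item distinctness is combinatorial: $L_i=L_{i'}$ forces $\mathcal{G}_i=\mathcal{G}_{i'}$ (using $\ker\psi_{i'}=L_{i'}$), and these exponent sets affinely span the supporting hyperplane.
\end{itemize}

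Your route is more elementary and transparent. It avoids the Rees algebra apart from the rank-$n$ fact taken from Discussion~\ref{log2}, which could also be read off directly from $\mathcal{P}_i$ being a facet. The paper's route gives more for free. It yields the identification $\fP_i=(\m,L_i)\cR(I)$ used in the corollary that follows. It also shows that the $L_i$ are exactly the $n$-dimensional minimal primes of $\cF(I)$, via the equidimensionality of $\mathrm{gr}_I(R)$, which is what Remark~\ref{RemarkmaxAnalyicSpread} relies on.
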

\begin{proof}
Assigning to each variable $x_j$ degree $v_i(x_j)>0,$ the ring $R$ is positively graded and $I_i$ is generated in a single degree. Therefore, Proposition~\ref{PropReducedFiberHomogeneousIdeal} implies that the map $k[\mathcal{G}_i] \twoheadrightarrow \cF(I_i)$ is an isomorphism. 
The fact that $L_1, \ldots, L_t$ are distinct minimal primes of $\cF(I)$ will be an immediate consequence
of Discussion~\ref{Discussion7.10}.
\end{proof}

\begin{Discussion}\label{Discussion7.10}{\rm  We saw in Proposition~\ref{1-1} that there is a one-to-one correspondence
between the Rees valuations $v_1, \ldots,v_s$ of $I$ and the minimal primes $\fP_1, \ldots, \fP_s$
of $I\cR(I),$ and so there is a one-to-one correspondence
between $v_1, \ldots, v_t$ and the minimal primes $\fP_1, \ldots, \fP_t$
of $I\cR(I)$ that contain $\m.$ 
For $1 \leq  i \leq t \, $, the $k$-algebra $A$ of Notation and Discussion~\ref{Dgrading} is equal to the ring $k[\mathcal{G}_i]$ of Proposition~\ref{Prop7.9}, thus, Lemma~\ref{LIso} gives a surjective homomorphism 
$\Psi_i: \cR(I) \twoheadrightarrow k[\mathcal{G}_i]$ with $\ker \Psi_i=\fP_i,$ and this map fits into the 
commutative diagram
\[
\begin{tikzcd}
\mathcal{R}(I) \arrow[->>]{r}[above]{\rm nat} \arrow[->>]{d}{\Psi_i} & \mathcal{F}(I) \arrow[->>]{d}{\psi_i} \\
k[\mathcal{G}_i] \arrow[->>]{r}[above]{\rm nat}[below]{\sim} & \mathcal{F}(I_i) \, ,
\end{tikzcd}
\]
where the isomorphism holds by Proposition~\ref{Prop7.9}.
As $\ker \psi_i=L_i$ we see that $\fP_i \subset \cR(I)$ is the preimage of $L_i \subset \cF(I).$

From this we deduce that, first, $\fP_i=(\m, L_i)\cR(I)$ and, second, that the ideals $L_1, \ldots, L_t$ are indeed distinct minimal primes of $\cF(I).$ In fact, these are
precisely the minimal primes of $\cF(I)$ that have maximal dimension, namely $n,$ 
as can be seen from the fact that the ring $\cR(I)/I \cR(I) \cong {\rm gr}_I(R)$
is equidimensional of dimension $n$ and maps onto $\cF(I).$
}\end{Discussion}

\begin{Corollary} The minimal primes $\fP_1, \ldots, \fP_t$ of $I \cR(I)$ that contain $\m$ 
are of the form $\fP_i=(\m, L_i)\cR(I)$ and their preimages in the polynomial ring $R[Y]$ 
are $(\m, K_i, Y\setminus Y_i)R[Y],$ with $L_i$ and $K_i$ as defined in Theorem~$\ref{Th7.2}$ and Notation~$\ref{kernels}$.
\end{Corollary}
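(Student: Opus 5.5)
The plan is to read both assertions off Discussion~\ref{Discussion7.10} by pulling the ideals back along the natural surjections $R[Y]\twoheadrightarrow \cR(I)\twoheadrightarrow \cF(I)$.

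\emph{First assertion.} Fix $1\le i\le t$. By Discussion~\ref{Discussion7.10}, $\fP_i=\ker\Psi_i$, and $\fP_i$ is the preimage of $L_i$ under the natural map $\cR(I)\to\cF(I)=\cR(I)/\m\cR(I)$. Since this map is surjective with kernel $\m\cR(I)$, the preimage of $L_i$ is the ideal generated by $\m$ together with lifts of the generators of $L_i$. By Theorem~\ref{Th7.2}, these generators are the classes $\alpha_j+\m I$ with $\alpha_j\in\mathcal G\setminus\mathcal G_i$. Their lifts are the elements $\alpha_jt$, which we read as the image of $L_i$ in $\cR(I)$. Hence $\fP_i=(\m,L_i)\cR(I)$. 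The paper already states this equality; I will only make the lifting explicit.

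\emph{Second assertion.} I will compose with $R[Y]\twoheadrightarrow\cR(I)$, $y_j\mapsto\alpha_jt$. The composite $R[Y]\to\cR(I)\to\cF(I)\to\cF(I)/\sqrt0$ kills $\m$. Modulo $\m$ it becomes the map $\pi\colon k[Y]\to\cF(I)/\sqrt0$ of Notation~\ref{kernels}. The ideal $L_i$ is a minimal prime (Proposition~\ref{Prop7.9}), so it contains the nilradical. Therefore the preimage of $\fP_i$ in $R[Y]$ equals $\m R[Y]+Q_iR[Y]$, where $Q_i\subset k[Y]$ is the preimage of $L_i/\sqrt0$ under $\pi$. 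In Notation~\ref{kernels} this $Q_i$ is the ideal $Q_{i\ell}$ attached to $P_{i\ell}=L_i$. Proposition~\ref{Prop7.9} (using $\p_{i\ell}=0$) gives $\{Q_{i\ell}\}=\{(K_i,Y\setminus Y_i)k[Y]\}$. The preimage is therefore $(\m,K_i,Y\setminus Y_i)R[Y]$.

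If I want to check this last identification directly, I will argue as follows. The preimage of $L_i$ in $k[Y]$ contains $Y\setminus Y_i$. Modulo $Y\setminus Y_i$ it corresponds, via the isomorphism $\cF(I)/L_i\cong\cF(I_i)$ of Theorem~\ref{Th7.2}, to the kernel of $k[Y_i]\to\cF(I_i)\cong k[\mathcal G_i]$, which is a domain. That kernel is $K_i$, since $\cF(I_i)$ is reduced.

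\emph{Main obstacle.} The only delicate point is bookkeeping. The preimage must be taken in $R[Y]$, not $k[Y]$, and one must justify adjoining $\m$; this holds because $\m\cR(I)\subseteq\fP_i$. One must also see that passing to $\cF(I)/\sqrt0$ loses nothing, because $\sqrt0\subseteq L_i$. Both points follow from the minimality of $L_i$ and $\fP_i\supseteq\m$.
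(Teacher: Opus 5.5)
Your proposal is correct and follows the paper's route. The equality $\fP_i=(\m,L_i)\cR(I)$ is read off from Discussion~\ref{Discussion7.10}, where $\fP_i=\ker\Psi_i$ is the preimage of $L_i=\ker\psi_i$, and the preimage in $R[Y]$ comes from Proposition~\ref{Prop7.9}'s identification $\{Q_{i\ell}\}=\{(K_i,Y\setminus Y_i)k[Y]\}$ after adjoining $\m$. Your direct check via $\cF(I)/L_i\cong\cF(I_i)\cong k[\mathcal G_i]$ is simply a correct unpacking of that last step.
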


It was described in Notation and Discussion~\ref{Dnozerodim} how to obtain all minimal prime ideals
of $I \cR(I)$ from the ones that contain $\m.$ This also gives a description of the
minimal prime ideals of the associated graded ring $G={\rm gr}_I(R).$

Theorem \ref{ThmReducedSpecialFiberIdealDescription} and Proposition \ref{Prop7.9} imply the next result for zero-dimensional monomial ideals, because
in that case all Rees valuations are centered at the maximal ideal.

\begin{Corollary}\label{Corollary6.12} In the setting of Notation $\ref{kernels}$, assume that $I$ is zero-dimensional. 
  \begin{enumerate}[{$(a)$}]
    \item The minimal primes of $\cF(I)$ are $L_1, \ldots, L_s.$
    \item $K=\bigcap\limits_{i=1}^s\ ( K_i, Y\setminus Y_i)$ is an irredundant intersection of
    prime ideals.
\end{enumerate}  
\end{Corollary}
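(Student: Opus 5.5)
The corollary is a specialization of Theorem~\ref{ThmReducedSpecialFiberIdealDescription} and Proposition~\ref{Prop7.9} to the case $t=s$. The plan has three steps.

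\emph{Step 1: all Rees valuations are centered at $\m$.} Since $I$ is zero-dimensional, each Rees valuation $v_i$ gives positive value to every variable. Indeed, the corresponding non-coordinate supporting hyperplane of ${\rm NP}(I)$ has all coefficients positive, because ${\rm NP}(I)$ contains points on every coordinate axis. Equivalently, each minimal prime $\fP_i$ of $I\cR$ contracts to $\m$. Hence $t=s$, and Proposition~\ref{Prop7.9} applies to every index $i$. This gives that each $\cF(I_i)$ is a domain, that $\{P_{i\ell}\}=\{L_i\}$ and $\{Q_{i\ell}\}=\{(K_i,Y\setminus Y_i)k[Y]\}$, and that $L_1,\ldots,L_s$ are distinct minimal primes of $\cF(I)$.

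\emph{Step 2: part (a).} By Theorem~\ref{ThmReducedSpecialFiberIdealDescription}(a), the minimal primes of $\cF(I)$ are the minimal elements of $\{P_{i\ell}\}=\{L_1,\ldots,L_s\}$. By Proposition~\ref{Prop7.9} and Discussion~\ref{Discussion7.10}, each $L_i$ is itself a minimal prime and they are pairwise distinct. So no $L_i$ properly contains another, every $L_i$ is a minimal element, and (a) follows.

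\emph{Step 3: part (b).} By Theorem~\ref{ThmReducedSpecialFiberIdealDescription}(b), $K=\bigcap_{i,\ell} Q_{i\ell}$. By Step 1, $Q_{i\ell}=(K_i,Y\setminus Y_i)$, so $K=\bigcap_{i=1}^s (K_i,Y\setminus Y_i)$. Each $(K_i,Y\setminus Y_i)$ is prime because it equals $\pi^{-1}(L_i)$ and $L_i$ is prime. Alternatively, $k[Y]/(K_i,Y\setminus Y_i)\cong k[Y_i]/K_i\cong \cF(I_i)/\sqrt0=\cF(I_i)$, which is a domain by Proposition~\ref{Prop7.9}.

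For irredundancy: the ideals $(K_i,Y\setminus Y_i)=\pi^{-1}(L_i)$ are the preimages, under the surjection $k[Y]\to\cF(I)/\sqrt0$, of the distinct minimal primes $L_i$. They are therefore exactly the distinct minimal primes of $K$. Since $K$ is radical, it is the irredundant intersection of its minimal primes, and no one of them contains the intersection of the others.

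The only point needing care is the distinctness and minimality of the $L_i$. Proposition~\ref{Prop7.9} defers this to Discussion~\ref{Discussion7.10}, which identifies $\fP_i$ as the preimage of $L_i$. Distinct Rees valuations give distinct $\fP_i$ by Proposition~\ref{1-1}, so the $L_i$ are distinct. Each $L_i$ is minimal because $\dim \cF(I)/L_i=\dim \cF(I_i)=n=\dim \cF(I)$. Beyond this, the argument is bookkeeping.
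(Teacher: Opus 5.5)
Your proposal is correct and follows the paper's argument: since $I$ is zero-dimensional, every Rees valuation is centered at $\m$, so $t=s$, and the corollary follows by specializing Theorem~\ref{ThmReducedSpecialFiberIdealDescription} together with Proposition~\ref{Prop7.9} and Discussion~\ref{Discussion7.10}. The extra details you supply (positivity of the hyperplane coefficients, irredundancy via radicality of $K$, and minimality of the $L_i$ via $\dim \cF(I)/L_i = n$) make explicit what the paper leaves implicit in its one-line justification.
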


\begin{Remark}\label{RemarkmaxAnalyicSpread} In the setting of Notation $\ref{kernels}$, 
assume that $v_1, \ldots, v_t$ are the Rees valuations centered at the maximal ideal $\m$. 
The following are equivalent:
 \begin{enumerate}[{$(1)$}]
    \item the minimal primes of $\cF(I)$ are $L_1, \ldots, L_t$;
    \item $K=\bigcap\limits_{i=1}^s\ ( K_i, Y\setminus Y_i)$ is an irredundant intersection of
    prime ideals;
    \item $\cF(I)$ is equidimensional of dimension $n.$
\end{enumerate} 
   \end{Remark}
\begin{proof} The equivalence of (1) and (2) follows the definition of $L_i,$ $K_i,$ $K$ 
in Theorem~\ref{Th7.2} and Notation~\ref{kernels}, and the equivalence of (2) and (3) is
a consequence of Discussion~\ref{Discussion7.10}.
    \end{proof}

In general, the minimal primes of the special fiber ring of a monomial 
ideal do not all arise from Rees valuations centered at the maximal
ideal, since these rings need not be equidimensional:
For instance, the ideal $(x_1^5x_2^5,x_1^3x_2^2x_3,_1x_2^2x_3^3,x_2^4x_3^3,x_1x_3^5)\subset k[x_1,x_2,x_3]$ has analytic spread 3, but its special fiber ring has a minimal prime ideal of dimension 2.

 Even when there exists  a non-negative grading of $R$ such that a monomial ideal is generated in a single degree, the special fiber ring is not necessarily a domain unless $R$, with the chosen grading, is positively graded. This is the case for the ideals $I_i$ associated to Rees valuations $v_i$ that are not centered at the maximal ideal. 
 In particular, if $\cF(I_i)$ is not a domain, it cannot be isomorphic to the toric ring $k[\mathcal{G}_i]$. We give some examples of this behavior below;
 they also show that,
 although each Rees valuation centered at the maximal ideal corresponds to a unique
 minimal prime of the special fiber ring, other Rees valuations may correspond to
 no or to multiple minimal primes.

\begin{Example}\label{Ex7.15}{\rm Let $I=(x^2,\, xy,\, y^2z)\subset k[x,y,z]$. Setting $\deg(x)=1, \deg(y)=1, \deg(z)=0$, this ideal is generated in degree 2. 
However, the special fiber ring is not isomorphic to $k[x^2, xy, y^2z]$. 
Indeed, the defining ideal of the special fiber ring is $(y_1y_3)\subset k[y_1, y_2, y_3],$ in particular, 
$\cF(I)$ has two minimal primes. 
The ideal $I$ has two Rees valuations, one gives the ideal $I_1=(xy,\, y^2z)$ and the other gives $I_2=I.$ 
Since $I_1 \subsetneq I_2$, 
the ideal $I_1$ cannot correspond to a
minimal prime of $\cF(I),$ whereas $I_2$ gives rise to two minimal primes.

In general, the special fiber ring of an equigenerated monomial ideal may not be reduced. 
For instance, the special fiber ring of $I=(x^3,\, xy^2,\, y^3z)\subset k[x,y,z]$ 
has defining ideal 
$(y_1y_3^2)\subset k[y_1,y_2,y_3].$
Note that, setting  $\deg(x)=1, \deg(y)=1, \deg(z)=0$ the ideal $I$ is generated in degree 3. 
}\end{Example}

\begin{Remark}{\rm
In \cite{Singla}, Singla computes the reduced special fiber ring of monomial ideals that are \emph{extremal}, that is,  equal to their minimal monomial reduction. 
In that case, she obtains a formula similar in appearance to our Theorem~\ref{ThmReducedSpecialFiberIdealDescription}(b). 
However, the ideals $I_i$ she considers do not correspond to Rees valuations, but to the maximal compact faces of the Newton polyhedron. 
For instance, in the first example of Example~\ref{Ex7.15}, our ideals are $I_1 = (xy,\, y^2z)$ and $I_2 = I$, whereas hers are $I_1 = (x^2,\, xy)$ and $I_2 = (xy,\, y^2z)$. 
Notice that the convex hull of the exponent vectors of the monomials in our $\mathcal{G}_2$ is not a face of ${\rm NP}(I),$  because it properly contains the maximal compact faces given by the convex hulls of the exponent vectors of her $\mathcal{G}_1$ and $\mathcal{G}_2$. 
In general, as we explained in Example~\ref{Ex7.15}, $\cF(I_i)$ may not coincide with the toric ring $k[\mathcal{G}_i]$. 
In her case however, since $I$ is extremal, the exponent vectors of the generators her ideals $I_i$ are the vertices of maximal compact faces of ${\rm NP}(I).$ 
Therefore, the ideals $I_i$ have no proper reduction, so their special fiber rings $\cF(I_i)$ are polynomial rings and thus agree with the toric rings $k[\mathcal{G}_i]$. 
}\end{Remark}

\begin{Remark}{\rm In the zero-dimensional case, the Rees valuations 
do correspond to the maximal compact faces of ${\rm NP}(I).$ 
Thus,
in this case our ideals $I_i$ coincide with those of Singla, so our formula is a 
generalization of hers, from extremal monomial ideals to arbitrary monomial ideals.
This raises the question of whether Singla's formula can be extended to 
ideals that are not necessarily zero-dimensional.}
\end{Remark}
 
\subsection*{Acknowledgments}

We are grateful to Michele Graffeo and Andrea Ricolfi for their inspiring article and for many helpful discussions, and to Winfried Bruns, Alessio D'Alì, Mircea Musta\c{t}\u{a} and Ritvik Ramkumar for helpful conversations on the material of this paper.
Computations with Macaulay2 \cite{GS} assisted us  during the preparation of this paper.

This material is based in part upon work supported by the National Science Foundation under Grant No.~DMS-1928930, while two of the authors were in residence at the Simons Laufer Mathematical Sciences Institute (SLMath) in Berkeley, California, during the Spring 2024 semester on Commutative Algebra, and while the third author was visiting SLMath. We thank SLMath for its hospitality.

\bibliographystyle{amsra}
\bibliography{references}

\end{document}